\DeclareMathOperator{\Binopdf}{binopdf}
\DeclareMathOperator{\MeanSpace}{\mathcal{E}}
\DeclareMathOperator{\DeltaS}{\mathcal{P}_{\scriptscriptstyle{\mathcal{S}}}}
\DeclareMathOperator{\QS}{\mathcal{Q}_{\scriptscriptstyle{\mathcal{S}^c}}}
\DeclareMathOperator{\QR}{\mathcal{Q}_{\scriptscriptstyle{\mathcal{R}}}}
\DeclareMathOperator{\IX}{\mathcal{I}}
\DeclareMathOperator{\IXR}{\mathcal{I}_{\scriptscriptstyle{\mathcal{R}}}}
\newcommand{\BigO}{\mathcal{O}(\frac{1}{\sqrt n})}
\newcommand{\BigON}{ \mathcal{O}(\tfrac{1}{\sqrt{ |\mathcal{N}^k|}})}
\newcommand{\Emp}[1]{\xi\left( #1\right)}
\newcommand{\Exp}[1]{\mathbb{E}[ #1]}
\newcommand{\Ninf}[1]{ \| #1 \| }
\newcommand{\ID}[1]{ \mathbb{1} (#1 ) }
\newcommand{\Prob}[1]{\mathbb{P} (#1)}    
\newcommand{\ProbK}[1]{\mathbb{P}^k (#1)}     
\newcommand{\argmin}{\operatornamewithlimits{argmin}} 
\newcommand{\Bz}{\boldsymbol{\mathfrak{Z}}} 
\newcommand{\LipK}[1]{H^{k,#1}} 
\newcommand{\Lip}[1]{H^{#1}} 
\newcommand{\FS}{\hat f} 
\newcommand{\qss}{r} 
\newcommand{\LS}{\ell} 
\newcommand{\VP}{V^p} 
\newcommand{\VM}{V^d} 
\newcommand{\VQ}{V^q} 
\newcommand{\VPQ}{V^{pq}} 
\newcommand{\PW}{\mathbb{P}_{W^k_t}(w) } 
\newcommand{\NT}{t \in \mathbb{N}_T} 
\newcommand{\MW}{\underline{w}_t^k} 
\newtheorem{Assumption}{Assumption}
\newtheorem{Theorem}{Theorem}
\newtheorem{Definition}{Definition}
\newtheorem{Remark}{Remark}
\newtheorem{Lemma}{Lemma}
\newtheorem{Corollary}{Corollary}
\newtheorem{Proposition}{Proposition}
\newtheorem{Problem}{Problem}
\newcommand\SUMI{\sum_{i\in\mathcal{N}^k}}
\newcommand{\edit}[1]{\textcolor{black}{#1}}
\newcommand{\Compress}{\medmuskip=0mu
\thinmuskip=0mu
\thickmuskip=0mu}
\begin{document}

\title{Deep Teams: Decentralized Decision Making   with Finite and Infinite Number of Agents}
\author{Jalal Arabneydi,~\IEEEmembership{Member,~IEEE,} and Amir G. Aghdam,~\IEEEmembership{Senior~Member,~IEEE}
\thanks{ This work has been supported in part by the Natural Sciences and Engineering Research Council of Canada (NSERC) under Grant RGPIN-262127-17, and in part by Concordia University under Horizon Postdoctoral Fellowship.}  
\thanks{Jalal Arabneydi and Amir G. Aghdam are with the  Department of Electrical and Computer Engineering, 
        Concordia University, 1455 de Maisonneuve Blvd. West, Montreal, QC, Canada, Postal Code: H3G 1M8.
        {\tt\small Email:jalal.arabneydi@mail.mcgill.ca} and
        {\tt\small Email:aghdam@ece.concordia.ca}}%
              
}

\maketitle

\begin{abstract}
Inspired by the concepts of  deep learning in artificial intelligence and fairness in behavioural economics,  we introduce  deep  teams in this paper.  In such systems, agents  are partitioned into a few  sub-populations so that  the dynamics and cost of  agents in each sub-population  is  invariant to the  indexing of agents. The goal of agents is to minimize a common cost function in such a manner that  the agents in each sub-population are not discriminated or privileged by the way they are indexed. Two non-classical information structures are studied. In the first one, each agent observes  its local state as well as the empirical distribution of the states of agents in each sub-population, called deep state, whereas in the second one,  the deep states of a subset (possibly all) of sub-populations are not observed. Novel  dynamic programs are developed to identify  globally optimal and sub-optimal solutions for the first and second information structures, respectively. The computational complexity of finding the optimal solution in both space and time is polynomial (rather than exponential) with respect to the number of agents in each sub-population and is linear (rather than exponential) with respect to the control horizon.  This complexity is further reduced  in time by introducing a  forward equation, that we  call   deep Chapman-Kolmogorov equation,  described by multiple  convolutional layers  of  Binomial probability distributions.  Two different prices are defined  for computation and communication, and it is shown that under mild conditions they converge to zero as the number of  quantization levels and the number of agents tend to infinity. In addition,  the main results are  extended to   infinite-horizon discounted  models and arbitrarily asymmetric cost functions. Finally,  a service-management example  with 200 users is presented.
\end{abstract}

\begin{IEEEkeywords}
Team theory,  deep structure, controlled Markov chains, large-scale systems,  non-classical information.
\end{IEEEkeywords}

\section{Introduction}\label{sec:introduction}
 Team theory studies  cooperative decision making and   is used   in numerous   applications such as smart grids,  swarm robotics, transportation networks, social networks, and emergent behaviours, to name only a few.   Such applications often consist of a group of interconnected decision makers, modelled as Markov decision processes,  that  wish to accomplish a common task  in the presence of  limited computation and communication  resources.  Historically,  team theory can be traced back to the seminal work of  Radner~\cite{Radner1962} and Marschack and Radner~\cite{MarschackRadner1972} on static teams as well as Witsenhausen~\cite{Witsenhausen1971separation} and Ho~\cite{ho1980team} on dynamic teams.  For a comprehensive literature overview,  the interested  reader is referred to~\cite{Yuksel2013stochastic} and references therein.

   When centralized information structure  is feasible (i.e., joint state is known to all decision makers),   optimal solution  is given by the celebrated dynamic programming principle~\cite{kumar2015stochastic}.  In this case, the  number of  computational resources (in space and time)\footnote{In this paper,  the computational resources in  space  and time,   respectively, refer to the size of memory and  the number of iterations that  an algorithm needs to be run in order to perform a task.  It is to be noted that the  computational complexity  in  time  is different from  the computational complexity in control horizon.} to identify the optimal solution increases  exponentially with the number of decision makers, in general; a phenomenon  known as the ``curse of dimensionality''. For example,   a centralized system consisting of $100$  decision makers   with binary states  requires   the computational resources of  order~$2^{100} \approx 10^{30}$,  which is intractable.  On the other hand,  centralized  information  structure may  not even be feasible  due to   limited communication resources,  specially when the number of decision makers is large.   In such cases,  it is desired to have some form of decentralized  information structure. However,  a lower communication requirement comes at the cost of a harder optimization problem to solve   because  the decision makers in a decentralized structure may  have different perception about the system~\cite{Bernstein2002complexity}.
Due to the high complexity of the decentralized control problems,  an explicit optimal solution may be intractable for systems with more than  two or three decision makers  \cite{ouyang2015signaling,lessard2015optimal}.  As a result,  after  nearly 60 years of research,  there is still a big gap between theory and practice.

In this paper,  inspired by the concepts of  deep learning in artificial intelligence~\cite{lecun2015deep} and fairness in behavioural economics~\cite{rabin1993incorporating},  we introduce the notion of \emph{deep structured  team} (deep team for short) as an attempt to  establish a bridge between team theory and its applications.   It is to be noted that deep learning and deep team share some resemblances; for example, they both involve  multi-stage  stochastic optimization problems  over  a number of  i.i.d. random variables, and  more importantly,  the deep team problem  resembles  a deep neural network. However,  the deep team approach  is conceptually   different from deep learning  because it is  not  data-driven and its depth refers to the number of decision makers. In simple words, deep team approach  is an endeavour to provide a systematic framework to  make  the classical single-agent control algorithms  deep with respect to the number of decision makers,  which in turn  makes them applicable to  large-scale  control systems  by  exploiting   the notion of invariance principle. To this end, we borrow the notion of partially exchangeable systems,  analogous to the notion of invariance of coordinates in   physics.  A multi-agent  system is said to be partially exchangeable if the population of agents can be partitioned into a few sub-populations in such a way that  the order in which the agents  are indexed in each sub-population does not matter. 

  It is well-known that  Markov decision processes  with  partially exchangeable agents are equivalent  to  Markov decision processes coupled  through the empirical distribution of  states and actions of  agents in each sub-population~\cite{arabneydi2016new}.  Subsequently, without loss of generality, we restrict attention to  the latter formulation in this paper.   We consider  two non-classical information structures:  deep-state sharing and partial  deep-state sharing,  where  deep state  refers to  the empirical distribution  of the states of agents  in  each sub-population.
    Under deep-state sharing structure,   each decision maker has access to its local state as well as  the deep states of all sub-populations  whereas under partial deep-state sharing,   it has access to its local state and  the deep states of  only a subset (which can be empty) of sub-populations.  Under these decentralized  information structures,  we  study microscopic and macroscopic behaviours of the decision makers  and  identify globally optimal and sub-optimal fair strategies. This manuscript is a complete version of authors'  recent work in  networked control systems~\cite{JalalCDC2017,JalalACC2018,JalalACC2019} and is the generalization  of  the concept of mean-field teams introduced in~\cite{arabneydi2016new}.

In the context of game theory, mean-field games study the  non-cooperative  behaviour of a large number of exchangeable players~\cite{HuangPeter2006,weintraub2008markov,Lasry2007mean}. The solution concept  is  Nash  equilibrium and the proof   revolves around the fact that the effect of a single player on others is negligible when the number of players is sufficiently large such that it can be considered as infinite.  This   reduces    the infinite-population game  to a two-player game between a generic player and an infinite population, and since the deep state of the infinite population (i.e., mean-field)  has  deterministic dynamics,  a Nash solution  may be obtained  in terms of  the solution of two coupled  forward-backward nonlinear  partial differential equations (i.e., Fokker-Planck-Kolmogorov and  Hamilton-Jacobi-Bellman equations). The existence of such  a solution  is  established by imposing  various Lipschitz-type fixed-point conditions (that generally hold for small time horizons) or monotonicity-type assumptions (that are often difficult to verify).  For the special case of a common cost function,  mean-field games may  be used to identify an  approximate  person-by-person (Nash-bargaining) solution.  It is to be noted that  person-by-person optimality is weaker than global optimality, in general.  For example,   consider  a simple  static two-agent control problem  with the cost function $|\max(x_1,x_2)|$, $x_1,x_2 \in \mathbb{R}_{\geq 0}$.  This problem has uncountably  many  person-by-person optimal solutions (e.g. $x_1=x_2=x, \forall x \in \mathbb{R}_{\geq 0}$) but admits a unique globally optimal solution (i.e. $x_1=x_2=0$). 

Another relevant  field of research that  is closely  related to  mean-field games is  mean-field-type control problems~\cite{andersson2011maximum,bensoussan2013mean,carmona2013control}. When the cost functions and control laws of players  in this type of problem are identical, the infinite-population  social optimization problem reduces to a single-agent stochastic optimal control problem, whose dynamics and cost depend on  the distribution of  the state (also known as  McKean-Vlasov type).   A solution to this problem is not known in general,  as  it  is time-inconsistent~\cite{carmona2018probabilistic,bensoussan2013mean}.  However,  it is  possible to find  a person-by-person time-consistent solution by  formulating the mean-field type control in terms of two coupled forward-backward  equations, similar to those in mean-field games, with the  distinction that  the mean-field   is replaced by the probability density function of the state of  the generic player~\cite{carmona2018probabilistic,Saldi2018}. 

Despite the differences between cooperative mean-field games and mean-field-type control, they  both  use the simplification afforded by the infinite-population model to propose strategies that are sub-optimal, fair,  person-by-person, closed-loop in the local state  and  open-loop in the population state.  To ensure that the infinite-population solution is a reasonable approximate solution for the finite population model, the standard approach is to assume  that the solution is a continuous function.   Finding  a numerical  solution for the general case of nonlinear state dynamics,  to  the best of  our knowledge, is  still an  open problem (specially when neither fixed-point nor  monotonicity-type conditions  are  imposed on the model). 

We take a different route in this paper and  consider deep team problems that have  finite state and action spaces with non-convex cost functions and possibly multiple person-by-person solutions.  More precisely,  we  study  an arbitrary number of agents (not necessarily large) wherein  the effect of each agent on other agents is not negligible.  Note that this is a  more challenging problem compared to the typical mean-field problems where the effect of a single agent on others is normally neglected.   We   propose a team-theoretic approach to identify the  \emph{exact} globally optimal fair strategy under deep-state sharing information structure (that is closed-loop in the population state) and a globally  sub-optimal fair strategy  under partial deep-state sharing information structure (that includes open-loop strategies).  The proposed dynamic programs  are time-consistent for  any number of  agents and  their minimization is carried out over the space of local control laws. 
 In addition,  we  show that  the performance gap between the optimal and sub-optimal solutions  converges to zero  as the number of agents goes  to infinity,  without imposing any continuity assumption on the solution. Furthermore,  we propose a quantization technique   for both finite- and   infinite-population models to numerically  compute the corresponding sub-optimal solutions  without restricting to any  fixed-point or monotonicity condition.  
  It is to be noted that since the approach is neither based on the negligible effect of individual agents nor  dependent on the future trajectory of the population state,  its extension to   major-minor and common-noise problems~\cite{Nourian2013MM,cardaliaguet2015master}  introduces no  additional complication.
  
%
The rest of the paper is organized as follows.  In Section~\ref{sec:Mean-field coupled system},  the problem is formulated and   the main  contributions of this paper are outlined.  To identify an optimal solution under deep-state sharing  and  a sub-optimal one  under partial deep-state sharing, two novel dynamic programs are  proposed   in Sections~\ref{sec:main-result-problem-finite} and~\ref{sec:main_results_pmfs}, respectively.  To alleviate the computational  complexity of these  dynamic programs, their quantized counterparts are presented  in Section~\ref{sec:quantized}.  In Sections~\ref{sec:infinite-horizon} and~\ref{sec:arbitrary},  the main results are  analogously extended  to  the infinite-horizon discounted and arbitrarily asymmetric  cost functions.  The special case of major-minor deep  teams  is presented in Section~\ref{sec:MM} followed by a  numerical example  in Section~\ref{sec:numerical}. Finally, the paper is concluded in  Section~\ref{sec:conclusion}.

\section{Problem Formulation}\label{sec:Mean-field coupled system}
Throughout the paper, $\mathbb{N}$, $\mathbb{R}_{\geq 0}$, and $\mathbb{R}_{>0}$ denote the set of  natural numbers, non-negative real numbers, and positive real numbers, respectively.  $\mathbb{N}_k$ denotes the finite set of integers $\{1,\ldots,k\}$.  In addition, $\Prob{\boldsymbol \cdot}$ is the probability of a random variable; $\Exp{\boldsymbol \cdot}$ is the expectation of an event; $\ID{\boldsymbol \cdot}$ is the indicator function of a set; $\Ninf{\boldsymbol \cdot}$  denotes the infinity norm of a vector; $\xi( \boldsymbol \cdot)$ is the empirical distribution of a set; $| \boldsymbol \cdot|$ is the absolute value of a real number or the cardinality of a set, and   $\delta_{\boldsymbol \cdot}$ denotes the Dirac measure with a unit mass concentrated at one point.  The short-hand notation $x_{1:t}$ is used  to denote vector $(x_1,\ldots,x_t)$. For any pair of integers $i,j \leq n \in \mathbb{N}$, $\sigma_{i,j}(\mathbf x)$ denotes the permuted version of vector $\mathbf x=(x_1,\ldots,x_n)$  such that the $i$-th element of $\sigma_{i,j}(\mathbf x)$ is $x_j$,  the $j$-the element of $\sigma_{i,j}(\mathbf x)$ is $x_i$, and the other elements are the same as of  $\mathbf x$.  The abbreviation \emph{a.s.}  stands for almost surely. For any $n \in \mathbb{N}$,   $\Binopdf(\boldsymbol \cdot,n,p)$ denotes the binomial probability distribution of $n$ trials with success probability~$p \in [0,1]$. 
Furthermore, for any $n,\qss \in \mathbb{N}$ and any  finite set $\mathcal{X}$, different  spaces are defined as described in Table~\ref{table:space}.
\begin{table}[t!]
\caption{ }\label{table:space}
\centering
\setlength\tabcolsep{0em}
\begin{tabular}{l} 
  \toprule
\multicolumn{1}{c}{\textbf{Space of probability measures over $\mathcal{X}$}}\\
\midrule
$\mathcal{P}(\mathcal{X})=\{(\alpha_1,\ldots,\alpha_{|\mathcal{X}|}) \big| \alpha_i \in [0,1], i \in \mathbb{N}_{|\mathcal{X}|}, \sum_{i=1}^{|\mathcal{X}|} \alpha_i=1\}$\\
\midrule
\multicolumn{1}{c}{\textbf{Space of empirical distributions  over $\mathcal{X}$ with $n$ samples} }\\
\midrule 
$\MeanSpace_n(\mathcal{X})=\{(\alpha_1,\ldots,\alpha_{|\mathcal{X}|}) \big| \alpha_i \in \{0, \frac{1}{n},\ldots,1\}, i \in \mathbb{N}_{|\mathcal{X}|}, \sum_{i=1}^{|\mathcal{X}|} \alpha_i=1\}$\\
\midrule
\multicolumn{1}{c}{\textbf{ $|\mathcal{X}|$-fold unit interval} }\\
\midrule
$\mathcal{I}(\mathcal{X})=\{(\alpha_1,\ldots,\alpha_{|\mathcal{X}|}) \big| \alpha_i \in [0,1], i \in \mathbb{N}_{|\mathcal{X}|}\}$\\
\midrule
\multicolumn{1}{c}{\textbf{$\mathcal{I}(\mathcal{X})$ uniformly quantized with$\qss$ levels  } }\\
\midrule
$\mathcal{Q}_\qss (\mathcal{X})=\{(\alpha_1,\ldots,\alpha_{|\mathcal{X}|}) \big| \alpha_i \in \{0, \frac{1}{\qss},\ldots,1\}, i \in \mathbb{N}_{|\mathcal{X}|}\}$\\
  \bottomrule
\end{tabular}
\end{table}
%
 Let $x_{1:n}$ denote a vector of $n \in \mathbb{N}$  samples from set $\mathcal{X}:=\{a_1,\ldots,a_{|\mathcal{X}|}\}$, where $x_i \in \mathcal{X}, i \in \mathbb{N}_n $. The empirical distribution function $\xi: \prod_{i=1}^n \mathcal{X} \rightarrow \MeanSpace_n(\mathcal{X})$  is defined as  a real-valued vector of size $|\mathcal{X}|$ such that
$\xi(x_{1:n})(a_j)= \frac{1}{n}\sum_{i=1}^n \mathds{1}(x_i=a_j), j \in \mathbb{N}_{|\mathcal{X}|}.
$
\subsection{Model}
There are various  applications in which  the population of agents can be  partitioned into a few sub-populations in such a way  that the order of  indexing  of agents in each sub-population  is not important.  For example, in a smart grid,  demands in a region  may be classified  as residential and commercial,  and the numbering of the demands in each class does not affect  the aggregate  consumed energy. Similarly, in swarm robotics,   robots may be categorized  into a few  groups  with identical characteristics such as leaders and followers  where neither  the  dynamics of motion nor the status of the swarm  depends on the way the robots are indexed in each group.   Such applications  may be modelled as described next.

Consider a discrete-time control system consisting of a finite population of  agents, where agents are partitioned into $K \in \mathbb{N}$ disjoint sub-populations.   Denote by $\mathcal{K}$ the set of sub-populations,  by $\mathcal{N}^k$  the agents of sub-population $k \in \mathcal{K}$, and by  $\mathcal{N}$ the entire population of agents; note that $\mathcal{N}=\cup_{k \in \mathcal{K}} \mathcal{N}^k$.  Given the control horizon $T \in \mathbb{N}$, let the state, action, and the noise of agent $i \in \mathcal{N}^k$ of sub-population $k \in \mathcal{K}$ at time $t \in \mathbb{N}_T$  be denoted by  $x^i_t \in \mathcal{X}^k$, $u^i_t \in \mathcal{U}^k$, and $w^i_t \in \mathcal{W}^k$, respectively. The spaces $\mathcal{X}^k$, $\mathcal{U}^k$ and $\mathcal{W}^k$ of every sub-population $k \in \mathcal{K}$ are finite and  do not depend on the size of sub-population $k \in \mathcal{K}$, i.e. $|\mathcal{N}^k|$.   For the entire population,  the joint state, joint action, and joint noise are analogously denoted by $\mathbf x_t=(x^i_t)_{i \in \mathcal{N}} \in \mathcal{X}$, $\mathbf u_t=(u^i_t)_{i \in \mathcal{N}} \in \mathcal{U}$,  and $\mathbf w_t=(w^i_t)_{i \in \mathcal{N}} \in \mathcal{W}$  at time $t \in \mathbb{N}_T$.  

 Let $\mathfrak{D}^k_t$ denote the  empirical distribution of the states and actions  of sub-population $k \in \mathcal{K}$ at time $t \in \mathbb{N}_T$, i.e.,   
\begin{equation}\label{eq:joint-mean-field-k}
\mathfrak{D}^k_t=\Emp{(x^i_t,u^i_t)_{i \in \mathcal{N}^k}} \in \MeanSpace_{|\mathcal{N}^k|}(\mathcal{X}^k \times \mathcal{U}^k).
\end{equation}
Similarly, let $d^k_t$ denote the empirical distribution of the states of sub-population $k \in \mathcal{K}$ at time $t \in \mathbb{N}_T$, i.e.,  
\begin{equation}\label{eq:mean-field-k}
d^k_t=\Emp{(x^i_t)_{i \in \mathcal{N}^k}} \in \MeanSpace_{|\mathcal{N}^k|}(\mathcal{X}^k).
\end{equation}
Define 
$\boldsymbol{\mathfrak{D}}_t:=(\mathfrak{D}_t^1,\ldots,\mathfrak{D}^K_t)$ and $  \mathbf d_t:=(d_t^1,\ldots,d^K_t)$. 
For ease of reference,  the empirical distribution of states  is called  \textit{deep state}  in the sequel. Denote  by $\MeanSpace$ the space of  realizations $\mathbf d_t$, i.e., $\MeanSpace:=\prod_{k \in \mathcal{K}} \MeanSpace_{|\mathcal{N}^k|}(\mathcal{X}^k)$, $\NT$. 

For any  $k \in \mathcal{K}$, the initial state of agent $i$ of sub-population~$k$ is denoted by  $x^i_1 \in \mathcal{X}^k$, and at time $t \in \mathbb{N}_T$ its state evolves as follows:
\begin{equation}\label{eq:dynamics-f-mean-field}
x^i_{t+1}=f^k_t(x^i_t,u^i_t, \boldsymbol{\mathfrak{D}}_t, w^i_t),
\end{equation}
 where  the primitive random variables $\{\mathbf x_1, \mathbf w_1,\ldots,\mathbf w_T\}$ are defined on a common probability space and are mutually independent. The  dynamics~\eqref{eq:dynamics-f-mean-field} can be \textrm{equivalently} represented  in terms of the transition probability matrix  such that:
\begin{multline}\label{eq:relation-models}
\mathbb{P}^k(x^i_{t+1} \mid  x^i_t,u^i_t, \boldsymbol{\mathfrak{D}}_t): =\\
  \sum_{w^i \in \mathcal{W}^k} \ID{x^i_{t+1}=f^k_t(x^i_t,u^i_t, \boldsymbol{\mathfrak{D}}_t, w^i_t)} \Prob{w^i_t=w^i}.
\end{multline}
In the sequel, we occasionally  interchange the two equivalent representations \eqref{eq:dynamics-f-mean-field} and \eqref{eq:relation-models}, for  ease of display.  Let the per-step cost function be  uniformly bounded and denoted by $c_t(\boldsymbol{\mathfrak{D}}_t) \in \mathbb{R}_{\geq 0}$  at time $t$.  Note that the social cost function is a special case of the above cost function. To see this, let $c^k_t(x^i,u^i_t,\boldsymbol{\mathfrak{D}}_t)$ denote the per-step cost of agent $i \in \mathcal{N}^k$ of sub-population $k \in \mathcal{K}$. Then,  one arrives at:
\begin{equation}
c_t(\boldsymbol{\mathfrak{D}}_t):=\sum_{k \in \mathcal{K}} \frac{1}{|\mathcal{N}^k|} \sum_{i \in \mathcal{N}^k} c^k_t(x^i,u^i_t,\boldsymbol{\mathfrak{D}}_t).
\end{equation}

 In this paper, we make the following three  assumptions on the primitive random variables.
\begin{Assumption}\label{assumption:exchangeable}
For  any sub-population $k \in \mathcal{K}$, the primitive  random variables $(w^i_t)_{i \in \mathcal{N}^k}$, $t \in \mathbb{N}_T$,   are exchangeable.
\end{Assumption}
\begin{Assumption}\label{assumption:iid}
The primitive random variables  $(w^i_t)_{i \in \mathcal{N}}$, $t \in \mathbb{N}_T$, are mutually independent across agents. In addition,  for  each sub-population $k \in \mathcal{K}$, random variables  $(w^i_t)_{i \in \mathcal{N}^k}$   are identically distributed with probability mass functions~$\mathbb{P}_{W^k_t}$.
\end{Assumption}
\begin{Remark}
Note that Assumptions~\ref{assumption:exchangeable} and~\ref{assumption:iid} do not impose any restriction on the  probability distribution of  initial states. 
\end{Remark}
\begin{Assumption}\label{assumption:iid-x}
The primitive random variables    $(x^i_1)_{i \in \mathcal{N}}$  are mutually independent across agents, and  for  each sub-population $k \in \mathcal{K}$,  random variables $(x^i_1)_{i \in \mathcal{N}^k}$  are identically distributed with probability mass functions $\mathbb{P}_{X^k_1}$.
\end{Assumption}

Denote by  $I^i_t  \subseteq \{\mathbf x_{1:t}, \mathbf u_{1:t-1} \}$ the information  set of  agent $i \in \mathcal{N}$ at time $t \in \mathbb{N}_T$, i.e.,
\begin{equation}\label{eq:control-law-general}
u^i_t=g^i_t(I^i_t),
\end{equation}
where  function $g^i_t$  is called the  control law of agent $i$ at time $t \in \mathbb{N}_T$. The set of control laws $\mathbf{g}:=\{(g^i_t)_{i \in \mathcal{N}}\}_{t=1}^T$ is defined as the \emph{strategy} of  the system.

\begin{Definition}[\textbf{Partially exchangeable (fair) strategies}]\label{def:fair-strategies}
A strategy $\mathbf g$ is said to be partially exchangeable  if for any pair of agents $(i,j) \in \mathcal{N}^k$ of  any sub-population $k \in \mathcal{K}$ at any time $\NT$,
$\sigma_{i,j} \left(g^s_ t(I^s_t)\right)_{s \in \mathcal{N}} = \left(g^s_ t( \sigma_{i,j}  I^s_t)\right)_{s \in \mathcal{N}},$
i.e., exchanging  agents $i$ and $j$ has no effect on the strategy.
\end{Definition}

\subsection{Admissible strategy}\label{sec:admissible}

 The set of admissible strategies is  defined as  the set of partially exchangeable (fair) strategies, where no agent is privileged or discriminated by the way it is indexed in a sub-population. The restriction to fair strategies may be viewed from two angles: a constraint that must be satisfied or an assumption that is limiting.   In this paper, we focus on the former case and present applications in which such a restriction is desirable and practical. It is substantiated by numerous experimental data  in behavioural economics that humans make their decisions based not only  on rationality but also on whether or not the decisions are fair~\cite{rabin1993incorporating,falk2006theory}. For example, an unfair resource allocation in a  grid  can  lead to protest and anarchy  among  users, due to the discrimination against some users,  even if such a strategy yields the lowest possible social cost function. This means that a Pareto-optimal \emph{unfair} strategy is  not necessarily  a sustainable equilibrium.  To  learn more about the  importance of fairness, the interested reader is referred to a pivotal counterexample in behavioural economics called the  ultimatum game~\cite{Guth1982} for  a seemingly irrational behaviour.  In addition, fair strategies are important in  control theory   because they provide robustness,  where, for instance,  it is   desirable  to distribute the total load of a network in a fairly manner  among  servers in order to increase  the  life-time of the servers   as well as the  robustness of the network (in  case   a server fails).  Furthermore, since finding a Pareto-optimal solution is  computationally  expensive, as described  in Section~\ref{sec:introduction},  the optimal fair strategy is of particular interest in practice, as  a  simpler (yet more tractable) alternative.

Two  decentralized information structures are investigated in this paper.  The first one is referred to as  the \emph{deep-state sharing} (DSS), where for any~$i \in \mathcal{N}$,  agent~$i$ at time $\NT$ observes  its local state $x^i_t$   as well as the history of the deep states  of all sub-populations, i.e.,
\begin{equation}
I^i_t=\{x^i_{t},\mathbf{d}_{1:t}\}. \tag{DSS}
\end{equation}
In practice, there are different ways to share the deep state among agents. For example, in cellular communications,  the deep state  may be collected and transmitted  to all agents by the base station, while  in  swarm robotics  the deep state may be computed in a distributed manner using consensus-based algorithms.  The second information structure is more general than DSS, and is   called the \emph{partial deep-state sharing} (PDSS),  where for any $i \in \mathcal{N}$,  agent $i \in \mathcal{N}$ at time $\NT$  observes  its local state $x^i_t$ and  the history of the deep states of  a subset of sub-populations $\mathcal{S} \subseteq \mathcal{K}$, i.e.,
\begin{equation}
I^i_t= \{x^i_{t},(d^k_{1:t})_{k \in \mathcal{S}}\}. \tag{PDSS}
\end{equation}
Note that if $\mathcal{S}=\mathcal{K}$, PDSS is the same as DSS and if $\mathcal{S}=\emptyset$, PDSS is fully decentralized. 
Note also that  DSS and PDSS respect the privacy of agents  by not sharing the local state of each agent.
The performance of any strategy $\mathbf g$ is  described by:
\begin{equation}\label{eq:J-general-def}
J_N(\mathbf{g})= \mathbb{E}^{\mathbf{g}} \big[ \sum_{t=1}^T c_t(\boldsymbol{\mathfrak{D}}_t) \big],
\end{equation}
where  the subscript $N$  denotes  the dependence  of the cost function  to the number of agents, and the expectation is taken with respect to the probability measures  induced by $\mathbf g$. 
\begin{Problem}\label{prob:MFS-finite}
 For deep-state sharing information structure, find the optimal fair strategy $\mathbf{g}^\ast$  such that for every fair strategy~$\mathbf{g}$,
$J_N(\mathbf g^\ast) \leq J_N(\mathbf g)$.
\end{Problem}
Denote by~$\mathcal{S}^c$ the complement set of $\mathcal{S}$, i.e., $\mathcal{S}^c=\mathcal{K} \backslash \mathcal{S}$, and let $n$ be the size of the smallest sub-population whose deep state is not observed, i.e., $n:= \min_{k \in \mathcal{S}^c} |\mathcal{N}^k|$. 
\begin{Problem}\label{prob:PMFS-finite}
For partial deep-state sharing information structure,  find an $\varepsilon(n)-$optimal  fair strategy $\mathbf{g}$ such  that
$J_N(\mathbf {g}) \leq J_N(\mathbf g^\ast)+ \varepsilon(n)$,
where $\varepsilon(n) \in \mathbb{R}_{>0}$ and $\lim_{n \rightarrow \infty} \varepsilon(n)=0$.
\end{Problem}
\begin{Remark}
\emph{For  an  exchangeable system, Pareto-optimal and  optimal  fair strategies  are not necessarily the same; however,   for a linear quadratic model,  they are identical under  DSS~\cite{arabneydi2016new}. 
}
\end{Remark}

Since both DSS and PDSS are non-classical information structures, the computational complexity of finding a solution to Problems~\ref{prob:MFS-finite} and \ref{prob:PMFS-finite} is NEXP,  in general~\cite{Bernstein2002complexity}. The main contributions of the present paper are spelled out below.
\begin{enumerate}
\item We develop a dynamic program that can be used to  find an optimal solution for Problem~\ref{prob:MFS-finite} (Theorem~\ref{thm:mfs-finite}). We also  show that the computational complexity of finding the solution in both space and time is polynomial  with respect to the number of agents in each sub-population and is linear  with respect to the control horizon (Corollary~\ref{remark:computational-space-time}).

\item Although polynomial complexity  is less than than exponential,   it could  still be high when the size of population is  medium or large.
For medium populations, we propose Theorems~\ref{thm:mfs-finite-independent}  and~\ref{thm:mfs-quantized} to alleviate the computational complexity (in time and space).  In particular,  show that the dynamics of the deep state in Theorem~\ref{thm:mfs-finite-independent}  is described by an equation that we refer to as  the \emph{Deep Chapman-Kolmogorov}  (DCK) equation, with a  structure analogous to that in convolutional  neural networks (Subsection~\ref{remark:chapman}). 

\item    For a large population, we consider Problem~\ref{prob:PMFS-finite}   because when  some sub-populations are large, it may not be feasible to collect and share their deep states among agents. We show that such information sharing has a negligible effect on the optimal performance of the system. In particular, we  develop a dynamic programming decomposition for Problem~\ref{prob:PMFS-finite}  that  provides  an $\varepsilon(n)$-optimal  strategy, where  $\varepsilon(n)$ converges to zero at the rate $1/\sqrt n$ (Theorem~\ref{thm:pmfs-finite}).  This dynamic program does not depend on the size of  sub-populations~$ \mathcal{S}^c$.

\item   For the numerical computation  of the dynamic program of Theorem~\ref{thm:pmfs-finite},  it is required, in general, to solve a non-smooth non-convex optimization problem.
 We propose a quantized solution  and prove that the quantization error converges to zero at a rate  inversely proportional to the number of quantization levels (Theorem~\ref{thm:pmfs-quantized}). An immediate consequence  of this result is that if the number of quantization levels is greater than $\sqrt{n}$, then the quantized solution will converge to the optimal solution  at the same rate that the unquantized solution does (Corollary~\ref{cor:rate}).

\item  We extend our main results to infinite-horizon discounted cost  (Theorems~\ref{thm:mfs-finite-discounted} and~\ref{thm:pmfs-finite-discounted}).  It is shown that DSS  strategy is stationary with respect to the observed deep states whereas PDSS strategy  is not (Remark~\ref{remark:infinite}).

\end{enumerate}

We define a number of short-hand notations to ease the exposition of the results and proofs in the  sequel.   Let   $ \IX:=  \prod_{k \in \mathcal{K}} \mathcal{I}(\mathcal{X}^k)$,  and  given  any subset $\mathcal{R} \subseteq \mathcal{K}$  and any scalar $\qss \in \mathbb{N}$,  define the following spaces: 
\begin{align}\label{eq:def-spaces-short}
\begin{cases}
\mathcal{P}_{\scriptscriptstyle{\mathcal{R}}}&:=\prod_{k \in \mathcal{R}} \mathcal{E}_{|\mathcal{N}^k|}(\mathcal{X}^k) \times  \prod_{k \in \mathcal{R}^c} \mathcal{P}(\mathcal{X}^k),\\
 \QR&:=  \prod_{k\in \mathcal{R}} \mathcal{Q}_{\qss}(\mathcal{X}^k) \times \prod_{k \in \mathcal{R}^c} \mathcal{E}_{|\mathcal{N}^k|}(\mathcal{X}^k), \\ 
 \IXR&:= \prod_{k\in \mathcal{R}} \mathcal{I}(\mathcal{X}^k)\times \prod_{k \in \mathcal{R}^c} \mathcal{E}_{|\mathcal{N}^k|}(\mathcal{X}^k),
 \end{cases}
\end{align} 
where $\MeanSpace \subset  \mathcal{P}_{\scriptscriptstyle{\mathcal{R}}} \subset \IX \text{ and }   \QR \subset \IXR \subset \IX$. Denote by $Q:\IXR \rightarrow \QR$ the quantizer function that maps  every point $\mathbf z \in \IXR$ to its nearest point $\mathbf q \in \QR$, i.e., 
$
Q(\mathbf z) \in \argmin_{\mathbf q \in \QR} \Ninf{\mathbf z - \mathbf q },
$
which implies that: 
$\Ninf{\mathbf z - Q(\mathbf z)} \leq \frac{1}{2\qss}, \forall \mathbf z \in \IXR$.

Let $\gamma^k:\mathcal{X}^k \rightarrow \mathcal{U}^k$ be the mapping from the local state space  to the local action space of sub-population $k \in \mathcal{K}$, and  $\boldsymbol \gamma:=\{\gamma^1,\ldots,\gamma^K\} \in \mathcal{G}$,  where $\mathcal{G}$  denotes the space  of all mappings~$\boldsymbol \gamma$.  Let also $\MW$ and $\underline{\mathbf w}_t$ denote the empirical distribution of the  local noises of sub-population $k \in \mathcal{K}$ and the entire system, respectively,  at time $t \in \mathbb{N}_T$, i.e.,  
\begin{align}\label{eq:mean-field-k-noise}
\begin{cases}
\MW&:=\Emp{(w^i_t)_{i \in \mathcal{N}^k}} \in \mathcal{E}_{|\mathcal{N}^k|}(\mathcal{W}^k),  \\
\underline{\mathbf w}_t&:= (\underline{w}_t^1,\ldots,\underline{w}^K_t)\in \underline{\mathcal{W}}:= \prod_{k \in \mathcal{K}}  \mathcal{E}_{|\mathcal{N}^k|}(\mathcal{W}^k).
\end{cases}
\end{align}
Then,  define the following functions at  every time $\NT$ and  for  any $\mathbf z=\{z^1,\ldots,z^K\} \in \IX$, $\boldsymbol \gamma \in \mathcal{G}$,  $\underline{\mathbf w}_t\in \underline{\mathcal{W}}$, and $k \in \mathcal{K}$:
 
1)  For any $x \in \mathcal{X}^k$ and  $u \in \mathcal{U}^k$,  $\phi^k(\mathbf z,\boldsymbol \gamma)(x,u) \in [0,1]$ is defined as 
$z^k(x) \ID{u=\gamma^k(x)}$,
where  its augmented form is:
\begin{equation}  \label{eq:phi-def-function}
  \phi(\mathbf z, \boldsymbol \gamma):=(\phi^1(\mathbf z, \boldsymbol \gamma),\ldots,\phi^K(\mathbf z, \boldsymbol \gamma)).
\end{equation}

2) For any   $ y \in \mathcal{X}^k$,  $\bar{f}^k_t(\mathbf z,\boldsymbol \gamma, \MW)(y) \in [0,1]$ is  defined  as
\begin{equation}\label{eq:def-bar-f-k}
\sum_{w \in \mathcal{W}^k} \sum_{x \in \mathcal{X}^k} z^k(x)   
 \ID{f^k_t(x,\gamma^k(x), \phi(\mathbf z,\boldsymbol \gamma), w)= y } \MW(w), 
\end{equation}
where  its augmented form is:
\begin{equation}\label{eq:bar-f-augmented-def}
\bar f_t(\mathbf z,\boldsymbol \gamma, \underline{\mathbf w}_t):=(\bar f^1_t(\mathbf z,\boldsymbol \gamma,\underline{w}^1_t),\ldots,\bar f^K_t(\mathbf z,\boldsymbol \gamma,\underline{w}^K_t)).
\end{equation}

3) For any $ y \in \mathcal{X}^k$,   $\FS^k_t(\mathbf z, \boldsymbol \gamma)(y) \in [0,1]$ is defined~as:
\begin{equation}\label{eq:hat-f-def}
\sum_{x \in \mathcal{X}^k} z^k(x) \ProbK{y \mid x, \gamma^k(x), \phi(\mathbf z, \boldsymbol \gamma)}.
\end{equation}

4)  For any per-step cost $c_t$,  define the following  nonnegative real function: 
\begin{equation}\label{eq:hat-l-def}
\ell_t(\mathbf z, \boldsymbol \gamma):=c_t(\phi(\mathbf z,\boldsymbol \gamma)). 
\end{equation}

\section{Main results for Problem~\ref{prob:MFS-finite}}\label{sec:main-result-problem-finite}
To find a solution to  Problem~1, Witsenhausen's standard form could be used  to develop a dynamic programming decomposition~\cite{Witsenhausen1973}.  However,  the resultant dynamic program  would be  intractable, and since the size of its information state increases with time, it could not be extended to  infinite horizon. Alternatively, one can  use  the  common information approach~\cite{Nayyar2013CIA} to construct a dynamic program  in terms of the conditional probability of the joint state, given the common information, i.e., $\Prob{\mathbf x_t \mid \mathbf d_{1:t}}$. In such a case, it is shown in~\cite{JalalCDC2014} by forward induction that 
if the initial states as well as noise processes are exchangeable,  $\Prob{\mathbf x_t \mid \mathbf d_{1:t}}$ is  also exchangeable under  fair strategies, and  hence can be represented   by~$\mathbf d_t$.   For  the general case of non-exchangeable initial states,  however,  the result of~\cite{JalalCDC2014} does not hold as $\Prob{\mathbf x_t \mid \mathbf d_{1:t}}$ is not  necessarily exchangeable. Thus, we  present a direct approach to obtain  a dynamic programming decomposition in terms of $\mathbf d_t$,  regardless of the probability distribution of  initial states. A salient feature   of this  method is  to  identify the structure of   the transition probability matrix of deep state $\mathbf d_t$, which proves to be useful  not only for the numerical computations  but also  for the convergence analysis of Problem~\ref{prob:PMFS-finite}.

\begin{Lemma}\label{lemma:fair}
When  attention is focused on fair strategies,   the control laws of  DSS and PDSS strategies are identical in each sub-population, i.e., $
g^i_t=g^j_t=:g^k_t, i,j \in \mathcal{N}^k, k \in \mathcal{K}$.
\end{Lemma}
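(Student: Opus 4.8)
The plan is to combine two facts: the permutation-invariance of the deep state, and the definition of a fair (partially exchangeable) strategy in Definition~\ref{def:fair-strategies}. The key observation is that for any two agents $i,j \in \mathcal{N}^k$ of the \emph{same} sub-population and any time $\tau$, the transposition $\sigma_{i,j}$ acting on the joint data leaves every deep state unchanged: each coordinate $d^m_\tau = \Emp{(x^\ell_\tau)_{\ell \in \mathcal{N}^m}}$ is a symmetric function of the states of sub-population $m$, so swapping two agents of sub-population $k$ does not alter $d^k_\tau$ (and trivially does not alter $d^m_\tau$ for $m \neq k$). Hence, under DSS, $\sigma_{i,j} I^i_t = \sigma_{i,j}\{x^i_t,\mathbf d_{1:t}\} = \{x^j_t,\mathbf d_{1:t}\} = I^j_t$, similarly $\sigma_{i,j} I^j_t = I^i_t$, and $\sigma_{i,j} I^s_t = I^s_t$ for every $s \notin\{i,j\}$; the same holds verbatim under PDSS with $\mathbf d_{1:t}$ replaced by $(d^k_{1:t})_{k\in\mathcal{S}}$. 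First I would record this invariance, which is immediate from the definition of the empirical-distribution map $\xi$.

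Next I would read off the $i$-th coordinate of the fairness identity $\sigma_{i,j}\big(g^s_t(I^s_t)\big)_{s\in\mathcal{N}} = \big(g^s_t(\sigma_{i,j} I^s_t)\big)_{s\in\mathcal{N}}$. By the definition of $\sigma_{i,j}$, the $i$-th coordinate of the left-hand side is $g^j_t(I^j_t)$, while the $i$-th coordinate of the right-hand side is $g^i_t(\sigma_{i,j} I^i_t) = g^i_t(I^j_t)$ by the invariance step. Therefore $g^i_t(I^j_t) = g^j_t(I^j_t)$ for every realization of the primitive random variables. Letting the realization vary, $I^j_t$ ranges over the whole reachable information space of agent $j$, which coincides with that of agent $i$ because the local state space $\mathcal{X}^k$, the action space $\mathcal{U}^k$ and the deep-state space depend only on the sub-population index $k$ and not on the agent index. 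Consequently $g^i_t$ and $g^j_t$ agree as functions, and we may set $g^k_t := g^i_t = g^j_t$.

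The only point requiring a little care — and the main (mild) obstacle — is this domain issue: a pair $(x,\mathbf d_{1:t})$ with $x$ outside the support of $d^k_t$ is never realized, so the argument above forces equality of the control laws only on the reachable set, and on its complement the laws can be taken equal without loss of generality (or the claim can be read modulo such null events). Because PDSS information sets have exactly the same symmetry structure, no separate argument is needed there, which is why the lemma can be stated for DSS and PDSS simultaneously.
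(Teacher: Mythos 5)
Your proof is correct and follows the same route the paper intends: the paper's own proof is a one-line appeal to equation \eqref{eq:control-law-general}, the definitions of DSS/PDSS, and Definition~\ref{def:fair-strategies}, and your argument is exactly the careful expansion of that appeal — deep states are symmetric under the within-sub-population transposition $\sigma_{i,j}$, so $\sigma_{i,j} I^i_t = I^j_t$, and reading off the $i$-th coordinate of the fairness identity gives $g^i_t(I^j_t)=g^j_t(I^j_t)$. Your remark about equality only being forced on the reachable information set (with the laws taken equal elsewhere without loss of generality) is a legitimate refinement that the paper glosses over.
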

\begin{proof}
The proof follows directly  from equation \eqref{eq:control-law-general}, the definitions of DSS and PDSS, and  Definition~\ref{def:fair-strategies}.
\end{proof}
According to equation~\eqref{eq:control-law-general} and  Lemma~\ref{lemma:fair}, for any $k \in \mathcal{K}$ and $i \in \mathcal{N}^k$, the control law of agent $i$ of sub-population $k$ at time $\NT$ under DSS is  $g^k_t: \MeanSpace^t \times \mathcal{X}^k \rightarrow \mathcal{U}^k$, i.e., 
\begin{equation}\label{eq:mfs-is-fair}
u^i_{t}=g^k_t(x^i_t,\mathbf d_{1:t}).
\end{equation}
Using  the information decomposition proposed in~\cite{Nayyar2013CIA}, we split $g^k_t$  into two parts for any $k \in \mathcal{K}$ and $\NT$. More precisely,    define function $\psi^k_t:\MeanSpace^t \rightarrow \mathcal{G}$ as follows:
\begin{equation} \label{eq:psi-g}
\psi^k_t(\mathbf d_{1:t}):=g^k_t(\boldsymbol \cdot,\mathbf{d}_{1:t}).
\end{equation}
Then, from \eqref{eq:mfs-is-fair} and \eqref{eq:psi-g},  one has:
\begin{equation}\label{eq:gamma-u}
u^i_t=\gamma^k_t(x^i_t),
\end{equation}
where  $\gamma^k_t:\mathcal{X}^k \rightarrow \mathcal{U}^k$ is defined by $\psi^k_t$, i.e.,
\begin{equation} \label{eq:psi-gamma}
\gamma^k_t:=\psi^k_t(\mathbf d_{1:t}).
\end{equation}
According to~\eqref{eq:gamma-u} and~\eqref{eq:psi-gamma}, the action of  agent $i$ of sub-population $k$ at time~$t$ is determined by two functions $\psi^k_t$ and $\gamma^k_t$.  In the sequel, we refer to the functions $\boldsymbol \gamma_t:=\{\gamma^1_t,\ldots,\gamma^K_t\} \in \mathcal{G}$  as  the \emph{local laws} and to $\boldsymbol \psi_t:=\{\psi^1_t,\ldots,\psi^K_t\} $ as the \emph{global laws}.  From  \eqref{eq:joint-mean-field-k}, \eqref{eq:mean-field-k} and \eqref{eq:gamma-u},  it follows that for any  $k \in \mathcal{K}$, $\NT$, $x \in \mathcal{X}^k$ and  $u \in \mathcal{U}^k$:
\begin{equation}\label{eq:joint mean-field-proof-1}
\mathfrak{D}^k_{t}(x,u) =\frac{1}{|\mathcal{N}^k|} \sum_{i \in \mathcal{N}^k} \ID{x^i_t=x}\ID{\gamma^k_t(x^i_t)=u}.
\end{equation}
Subsequently, it results from~\eqref{eq:phi-def-function} and~\eqref{eq:joint mean-field-proof-1} that:
\begin{equation}\label{eq:proof-joint-mean-field-1}
\boldsymbol{\mathfrak{D}}_{t}=\phi(\mathbf d_t, \boldsymbol \gamma_t).
\end{equation}
We  show that  the deep state evolves in a Markovian manner with respect to the local laws, which  means that the history of the deep state  except the most recent one  can be ignored.
\begin{Theorem}\label{thm:finite-mfs-polynomial}
Let Assumption~\ref{assumption:exchangeable} hold. For any $k \in \mathcal{K}$ and $\NT$, the dynamics of the deep state of sub-population $k$ at time~$t$ can be expressed by: 
\begin{equation}\label{eq:def-bar-f-k-thm}
d^k_{t+1}\substack{{a.s.}\\=} \bar f^k_t(\mathbf d_t,\boldsymbol \gamma_t,\MW),
\end{equation}
where $\MW$ and $\bar f^k_t$ are defined by~\eqref{eq:mean-field-k-noise} and~\eqref{eq:def-bar-f-k}, respectively. In addition, the deep state of the entire population evolves as:
\begin{equation}\label{eq:def-bar-f-thm}
\mathbf d_{t+1}\substack{{a.s.}\\=} \bar f_t(\mathbf d_t,\boldsymbol \gamma_t,\underline{\mathbf w}_t),
\end{equation}
where  $\underline{\mathbf w_t}$ and $\bar f_t$ are given by~\eqref{eq:mean-field-k-noise} and~\eqref{eq:bar-f-augmented-def}, respectively.
\end{Theorem}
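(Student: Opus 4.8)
The plan is to prove \eqref{eq:def-bar-f-k-thm} by a direct computation starting from the definition of the deep state as an empirical distribution, and then to obtain \eqref{eq:def-bar-f-thm} simply by stacking that identity over $k\in\mathcal{K}$. First I would collapse the strategy to a local law: fixing $k\in\mathcal{K}$ and $\NT$, Lemma~\ref{lemma:fair} together with \eqref{eq:gamma-u}--\eqref{eq:psi-gamma} shows that every agent $i\in\mathcal{N}^k$ applies $u^i_t=\gamma^k_t(x^i_t)$, and \eqref{eq:proof-joint-mean-field-1} shows that the joint state--action empirical is the deterministic image $\boldsymbol{\mathfrak{D}}_t=\phi(\mathbf d_t,\boldsymbol{\gamma}_t)$. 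Substituting both facts into the one-step dynamics \eqref{eq:dynamics-f-mean-field} gives, almost surely, $x^i_{t+1}=f^k_t\big(x^i_t,\gamma^k_t(x^i_t),\phi(\mathbf d_t,\boldsymbol{\gamma}_t),w^i_t\big)$ for every $i\in\mathcal{N}^k$; the point is that the ``environment'' argument of $f^k_t$ is now common to the whole sub-population and is a function of $\mathbf d_t$ and $\boldsymbol{\gamma}_t$ alone.

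Next I would expand and re-index. For an arbitrary $y\in\mathcal{X}^k$, the definition of $\xi$ yields
\begin{equation}
d^k_{t+1}(y)=\frac{1}{|\mathcal{N}^k|}\sum_{i\in\mathcal{N}^k}\ID{f^k_t(x^i_t,\gamma^k_t(x^i_t),\phi(\mathbf d_t,\boldsymbol{\gamma}_t),w^i_t)=y}.
\end{equation}
Since the summand depends on $i$ only through the pair $(x^i_t,w^i_t)$, the sum is a function of the joint empirical $\xi\big((x^i_t,w^i_t)_{i\in\mathcal{N}^k}\big)$, whose two marginals are precisely the $k$-th block $d^k_t$ of $\mathbf d_t$ and the noise empirical $\MW$ of \eqref{eq:mean-field-k-noise}. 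Grouping the agents by their realized pair $(x,w)$ and invoking Assumption~\ref{assumption:exchangeable}, I would argue that almost surely this joint empirical may be replaced by the product $d^k_t\otimes\MW$ of its marginals, which turns the previous display into
\begin{equation}
d^k_{t+1}(y)=\sum_{w\in\mathcal{W}^k}\sum_{x\in\mathcal{X}^k} d^k_t(x)\,\ID{f^k_t(x,\gamma^k_t(x),\phi(\mathbf d_t,\boldsymbol{\gamma}_t),w)=y}\,\MW(w)=\bar f^k_t(\mathbf d_t,\boldsymbol{\gamma}_t,\MW)(y),
\end{equation}
the last step being exactly the definition \eqref{eq:def-bar-f-k}. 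As $y\in\mathcal{X}^k$ was arbitrary, this proves \eqref{eq:def-bar-f-k-thm}, and concatenating it over $k\in\mathcal{K}$ with the augmented notation of \eqref{eq:mean-field-k-noise} and \eqref{eq:bar-f-augmented-def} yields \eqref{eq:def-bar-f-thm}.

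The hard part will be the aggregation step: justifying that, once one sums over the sub-population, the current states and the current noises ``decouple'', so that only the noise empirical $\MW$ --- rather than its joint empirical with the states --- is needed to pin down $d^k_{t+1}$. This is the single place where Assumption~\ref{assumption:exchangeable} is genuinely used; the natural route is presumably to condition on $\mathbf d_{1:t}$ and on the unordered family $\{w^i_t\}_{i\in\mathcal{N}^k}$ and to exploit that, by exchangeability, relabelling the noise indices leaves the conditional law of the continuation unchanged, so that $d^k_{t+1}$ may be evaluated along any admissible pairing of the states with the realized noises. Everything else --- the reduction to a local law, the expansion of $\xi$, and the stacking over $k$ --- is routine manipulation of the functions $\phi$ and $\bar f^k_t$ introduced in \eqref{eq:phi-def-function}--\eqref{eq:bar-f-augmented-def} and requires nothing beyond the fairness restriction already exploited in Lemma~\ref{lemma:fair}.
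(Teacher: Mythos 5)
Your proposal follows the paper's proof essentially step for step: expand $d^k_{t+1}(y)$ as the empirical count $\frac{1}{|\mathcal{N}^k|}\sum_{i\in\mathcal{N}^k}\ID{f^k_t(x^i_t,\gamma^k_t(x^i_t),\phi(\mathbf d_t,\boldsymbol\gamma_t),w^i_t)=y}$, invoke Assumption~\ref{assumption:exchangeable} to decouple the current states from the current noises, and then stack over $k\in\mathcal{K}$. The one step you leave as a sketch --- replacing the joint state--noise empirical by the product $d^k_t\otimes\MW$ --- is precisely where the paper does its only real work, implementing your ``evaluate along any admissible pairing'' idea concretely by asserting a.s.\ invariance of the count under every circular shift $\sigma_j$ of the noise indices and then averaging over $j$, which produces the factor $\frac{1}{|\mathcal{N}^k|}\sum_{j}\ID{w^{\sigma_j(i)}_t=w}=\MW(w)$ and hence the claimed product form.
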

\begin{proof}
According to \eqref{eq:mean-field-k}, \eqref{eq:dynamics-f-mean-field}, \eqref{eq:gamma-u} and \eqref{eq:proof-joint-mean-field-1}, for any $y \in \mathcal{X}^k$, one has:
\begin{equation}\label{eq:mfs-proof-poly-1}
d^k_{t+1}(y)= \frac{1}{|\mathcal{N}^k|} \sum_{i \in \mathcal{N}^k} \ID{f^k_t(x^i_t,\gamma^k_t(x^i_t),\boldsymbol{\mathfrak{D}}_t,w^i_t)=y}.
\end{equation}
For any $j \in \mathbb{N}_{|\mathcal{N}^k|},$  let $\sigma_j$ shift the vector $\mathbb{N}_{|\mathcal{N}^k|}$ circularly by $j$ positions, i.e.,  $\sigma_j(i) :=i+j$ if $ i+j \leq |\mathcal{N}^k|$; otherwise, $\sigma_j(i) :=i+j -|\mathcal{N}^k|$. From  Assumption~\ref{assumption:exchangeable}, it follows that:
\begin{equation}\label{eq:proof-iid-nm}
d^k_{t+1}(y)\substack{{a.s.}\\{=}} \frac{1}{|\mathcal{N}^k|}\SUMI \ID{f^k_t(x^i_t,\gamma^k_t(x^i_t),\boldsymbol{\mathfrak{D}}_t,w^{\sigma_j(i)}_t)=y}.
\end{equation}
It is now possible to  compute $|\mathcal{N}^k|  d^k_{t+1}(y) $ by summing up \eqref{eq:proof-iid-nm} over all  $j \in \mathbb{N}_{|\mathcal{N}^k|}$,  which is almost surely equal to:
\begin{align}\label{eq:eq:proof-iid-nm-1}
& \frac{1}{|\mathcal{N}^k|} \SUMI \sum_{j \in \mathcal{N}^k} \ID{f^k_t(x^i_t,\gamma^k_t(x^i_t),\boldsymbol{\mathfrak{D}}_t,w^{\sigma_j(i)}_t)=y}  \nonumber\\
&=\frac{1}{|\mathcal{N}^k|}   \sum_{w \in \mathcal{W}^k} \sum_{x \in \mathcal{X}^k}  \SUMI  \sum_{j \in \mathcal{N}^k} \ID{x^i_t=x} \ID{w^{\sigma_j(i)}_t=w}    \nonumber \\
& \qquad \times \ID{f^k_t(x,\gamma^k_t(x),\boldsymbol{\mathfrak{D}}_t,w)=y} \nonumber\\
&=   \sum_{w \in \mathcal{W}^k} \sum_{x \in \mathcal{X}^k}  \SUMI  \ID{x^i_t=x} \ID{f^k_t(x,\gamma^k_t(x),\boldsymbol{\mathfrak{D}}_t,w)=y}  \nonumber \\
&  \qquad \times \frac{1}{|\mathcal{N}^k|}  \sum_{j \in \mathcal{N}^k} \ID{w^{\sigma_j(i)}_t=w}      \nonumber\\
& \substack{(a)\\=}   \hspace{-.2cm}  \sum_{w \in \mathcal{W}^k}  \hspace{-.05cm} \sum_{x \in \mathcal{X}^k}   \hspace{-.05cm}  \SUMI   \hspace{-.1cm}  \ID{x^i_t \hspace{-.1cm}=\hspace{-.1cm}x} \ID{f^k_t(x,\gamma^k_t(x),\boldsymbol{\mathfrak{D}}_t,w)\hspace{-.1cm}=\hspace{-.1cm}y} \MW(w)     \nonumber\\
&\substack{(b)\\=}   |\mathcal{N}^k| \hspace{-.15cm} \sum_{w \in \mathcal{W}^k} \sum_{x \in \mathcal{X}^k}  d^k_t(x)  \ID{f^k_t(x,\gamma^k_t(x),\boldsymbol{\mathfrak{D}}_t,w)=y} \MW(w),
\end{align}
where $(a)$ and $(b)$ follow from \eqref{eq:mean-field-k} and~\eqref{eq:mean-field-k-noise} , respectively. Equation~\eqref{eq:def-bar-f-k-thm} follows now from equations~\eqref{eq:def-bar-f-k},~\eqref{eq:joint mean-field-proof-1} and the above equation. In addition, equation~\eqref{eq:def-bar-f-thm} follows from~\eqref{eq:mean-field-k-noise},~\eqref{eq:bar-f-augmented-def} and~\eqref{eq:def-bar-f-k-thm}.
\end{proof} 
\begin{Remark}\label{remark:control_law_independence}
\emph{It is to be noted  that the result of Theorem~\ref{thm:finite-mfs-polynomial} holds irrespective of the global laws $\boldsymbol{\psi}_{1:t}$. }
\end{Remark}
From Theorem~\ref{thm:finite-mfs-polynomial},  the transition probability matrix of the deep state of entire population can be presented as follows:
\begin{equation}\label{eq:mftpm}
\Compress
\Prob{\mathbf d_{t+1} \mid \mathbf d_{t},\boldsymbol \gamma_{t}}=
\sum_{\underline{\mathbf w}  \in \underline{\mathcal{W}}}  
\ID{\mathbf d_{t+1}=\bar f_t(\mathbf d_t,\boldsymbol \gamma_t,\underline{\mathbf w})}
\Prob{\underline{\mathbf w}_t=\underline{\mathbf w}}.
\end{equation}

\begin{Remark}
\emph{The deep-state process $\mathbf d_{1:T}$ is not necessarily a controlled Markov process under joint actions $\mathbf u_{1:T-1}$, i.e., 
\begin{equation}
\Prob{\mathbf d_{t+1} \mid \mathbf d_{1:t}, \mathbf u_{1:t}} \neq \Prob{\mathbf d_{t+1} \mid \mathbf d_{t}, \mathbf u_{t}}.
\end{equation}
As a counterexample, consider a homogeneous population of agents, where $x^i_{t+1}=( 1- x^i_t)u^i_t + x^i_t(1-u^i_t)$, $x^i_t,u^i_t\in \{0,1\}$, $i \in \mathbb{N}\backslash\{1,2\}, \NT,$ with  known initial states $\mathbf x_1$.  It is not possible to express  $d_{t+1}$  as a function of  $d_{t}$ and $\mathbf u_t$, but it is feasible to write it as a function of  $\mathbf u_{1:t}$ (by simply computing $\mathbf x_{1:t}$ due to the deterministic dynamics). Consequently,  the Markov property does not hold. 
 }
\end{Remark}
  Define value functions $\{\VM_1,\ldots,\VM_T,\VM_{T+1}\}$ backward in time such that 
$\VM_{T+1}(\mathbf{d})~=~0$, $\forall \mathbf d \in \MeanSpace.$
Also, for  any $\NT$ and  $\mathbf d \in \MeanSpace$, define:
\begin{equation}\label{eq:thm-dp-mfs-finie-t}
\VM_t(\mathbf{d})=\min_{\boldsymbol \gamma \in \mathcal{G}} \left(\LS_t\left(\mathbf d,\boldsymbol \gamma \right) +\Exp{\VM_{t+1} \left(\bar f_t(\mathbf d,\boldsymbol \gamma,\underline{\mathbf w}_t) \right)}\right).
\end{equation}
Denote by   $\boldsymbol \psi^{*}_t(\mathbf{d})=\{\psi^{*,1}_t(\mathbf{d}),\ldots,\psi^{*,K}_t(\mathbf{d})\}$  an argmin of the right-hand side of the above dynamic program.   Let  agent $i \in \mathcal{N}^k$  of  sub-population $k \in \mathcal{K}$   at  time $t \in \mathbb{N}_T$ take the following action:
\begin{equation}\label{eq:mfs-optimal-finite-strategy}
u^i_t=g^{\ast,k}_t(x^i_t, \mathbf d_t):=\psi^{*,k}_t(\mathbf d_t)(x^i_t), \quad x^i_t \in \mathcal{X}^k, \mathbf d_t \in \MeanSpace.
\end{equation} 
A preliminary version of the next theorem was presented in~\cite{JalalCDC2014}.
\begin{Theorem}\label{thm:mfs-finite}
Let  Assumption~\ref{assumption:exchangeable} hold. Then,  strategy~\eqref{eq:mfs-optimal-finite-strategy} is  optimal for Problem~\ref{prob:MFS-finite}.
\end{Theorem}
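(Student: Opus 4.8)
The plan is to reduce Problem~\ref{prob:MFS-finite}, which is posed over fair strategies under the DSS information structure, to a standard finite-horizon Markov decision problem faced by a fictitious coordinator whose state is the deep state $\mathbf d_t$ and whose action is the local law $\boldsymbol\gamma_t$, and then to invoke the ordinary dynamic programming principle for such problems. First I would use Lemma~\ref{lemma:fair} together with the decomposition \eqref{eq:psi-g}--\eqref{eq:psi-gamma} to argue that every admissible strategy is, without loss of generality, parametrized by a collection of global laws $\boldsymbol\psi_{1:T}$ with $\psi^k_t:\MeanSpace^t\to\mathcal{G}$: by fairness all agents of sub-population $k$ share the control law $g^k_t(x^i_t,\mathbf d_{1:t})$, and setting $\boldsymbol\gamma_t=\boldsymbol\psi_t(\mathbf d_{1:t})$ yields $u^i_t=\gamma^k_t(x^i_t)$. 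This step is the exact common-information reparametrization and introduces no approximation.

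Next I would rewrite the objective entirely in terms of the pair $(\mathbf d_t,\boldsymbol\gamma_t)$. By \eqref{eq:proof-joint-mean-field-1} the joint empirical distribution of states and actions satisfies $\boldsymbol{\mathfrak D}_t=\phi(\mathbf d_t,\boldsymbol\gamma_t)$ almost surely, so the definition \eqref{eq:hat-l-def} of $\ell_t$ gives $c_t(\boldsymbol{\mathfrak D}_t)=\ell_t(\mathbf d_t,\boldsymbol\gamma_t)$ a.s., whence $J_N(\mathbf g)=\mathbb E^{\boldsymbol\psi}\big[\sum_{t=1}^T\ell_t(\mathbf d_t,\boldsymbol\gamma_t)\big]$. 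I would then use Theorem~\ref{thm:finite-mfs-polynomial} and the induced transition kernel \eqref{eq:mftpm} to conclude that $\{\mathbf d_t\}_{t=1}^T$ is a controlled Markov chain on the finite set $\MeanSpace$ with finite action set $\mathcal{G}$, whose transition law $\Prob{\mathbf d_{t+1}\mid\mathbf d_t,\boldsymbol\gamma_t}$ depends on the chosen strategy only through the realized local law $\boldsymbol\gamma_t$ --- this is precisely the content of Remark~\ref{remark:control_law_independence}. Consequently the coordinator faces a perfectly observed, time-varying MDP with finite state and action spaces, finite horizon, and bounded per-step cost $\ell_t$.

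From here the conclusion follows from classical Markov decision theory. For such an MDP, history-dependent policies $\boldsymbol\psi_t(\mathbf d_{1:t})$ cannot outperform Markov policies $\boldsymbol\psi_t(\mathbf d_t)$; the optimal cost-to-go is characterized by the backward recursion \eqref{eq:thm-dp-mfs-finie-t} with terminal condition $\VM_{T+1}\equiv 0$; and any pointwise minimizer $\boldsymbol\psi^*_t(\mathbf d)$ of the right-hand side of \eqref{eq:thm-dp-mfs-finie-t} (which exists because $\mathcal{G}$ is finite) attains $\min_{\boldsymbol\psi}J_N(\mathbf g)=\mathbb E[\VM_1(\mathbf d_1)]$. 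I would prove the latter by the standard one-step-exchange/backward-induction argument establishing $\mathbb E^{\boldsymbol\psi}\big[\sum_{s=t}^T\ell_s(\mathbf d_s,\boldsymbol\gamma_s)\,\big|\,\mathbf d_{1:t}\big]\ge \VM_t(\mathbf d_t)$ for every $\boldsymbol\psi$, with equality under $\boldsymbol\psi^*$. Translating back through \eqref{eq:psi-gamma}, the agent-level strategy $g^{*,k}_t(x^i_t,\mathbf d_t)=\psi^{*,k}_t(\mathbf d_t)(x^i_t)$ of \eqref{eq:mfs-optimal-finite-strategy} achieves the minimum; it is fair (identical within each sub-population and invariant to agent relabelling, since $\mathbf d_t$ is), hence admissible, and since every fair DSS strategy corresponds to some $\boldsymbol\psi_{1:T}$, this proves optimality over the whole admissible class.

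I expect the only genuinely delicate point to be the justification that the deep-state process is a bona fide controlled Markov chain whose per-step cost and transition depend only on the current pair $(\mathbf d_t,\boldsymbol\gamma_t)$; but this is exactly what Theorem~\ref{thm:finite-mfs-polynomial}, equation \eqref{eq:mftpm}, and Remark~\ref{remark:control_law_independence} already supply. Note the contrast with the remark immediately following \eqref{eq:mftpm}, which cautions that $\mathbf d_t$ is \emph{not} Markov under the joint primitive actions $\mathbf u_t$: reparametrizing the coordinator's action as the prescription $\boldsymbol\gamma_t$ rather than as $\mathbf u_t$ is what makes the Markov structure --- and hence the dynamic program --- valid. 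Everything else (finiteness of $\mathcal{G}$, measurability, and the verification that no non-Markov or randomized deviation helps) is routine.
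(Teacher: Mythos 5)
Your proposal is correct and follows essentially the same route as the paper: reparametrize fair DSS strategies via the common-information split into global laws $\boldsymbol\psi_t$ and local laws $\boldsymbol\gamma_t$, use Theorem~\ref{thm:finite-mfs-polynomial} and \eqref{eq:proof-joint-mean-field-1}, \eqref{eq:hat-l-def} to show that $\mathbf d_t$ is an information state (a controlled Markov process under $\boldsymbol\gamma_t$ with cost depending only on $(\mathbf d_t,\boldsymbol\gamma_t)$), and then invoke standard finite Markov decision theory for the backward recursion \eqref{eq:thm-dp-mfs-finie-t}. The paper's own proof is just a terser statement of exactly these steps, so no further comparison is needed.
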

\begin{proof}
The proof follows from the fact that the deep state $\mathbf d_t$ is an information state for Problem~\ref{prob:MFS-finite}. In particular,  according to Theorem~\ref{thm:finite-mfs-polynomial}, $\mathbf d_{1:T}$ is a controlled Markov process with control actions $\boldsymbol \gamma_{1:T-1}$. Furthermore,  according to equations~\eqref{eq:hat-l-def} and~\eqref{eq:joint mean-field-proof-1}, the per-step cost is a function of $\mathbf d_t$ and $\boldsymbol \gamma_t$.  Thus, the dynamic programming decomposition follows from standard results in Markov decision theory~\cite{kumar2015stochastic}. 
\end{proof}
The cardinality of   $\mathcal{E}_{|\mathcal{N}^k|}(\mathcal{X}^k)$  is upper-bounded polynomially in the number of the agents of sub-population~$k~\in~\mathcal{K}$, i.e.,
\begin{equation}\label{eq:mean-field-cardinality}
| \mathcal{E}_{|\mathcal{N}^k|}(\mathcal{X}^k) | \leq (|\mathcal{N}^k|+1)^{|\mathcal{X}^k|}.
\end{equation}
Consequently,  the space of  dynamic program~\eqref{eq:thm-dp-mfs-finie-t} increases at most polynomially with respect to  the number of agents in each sub-population $k \in \mathcal{K}$, and is independent of  time.
\begin{Remark}
\emph{The decomposition proposed in equation~\eqref{eq:thm-dp-mfs-finie-t} is a non-standard dynamic program because the minimization is over  local law $\boldsymbol \gamma_t \in \mathcal{G}$ rather than joint control action $\mathbf u_t \in \mathcal{U}$.} 
\end{Remark}
The  optimal strategy of Theorem~\ref{thm:mfs-finite} can be implemented in a distributed manner since every agent can independently compute the dynamic program \eqref{eq:thm-dp-mfs-finie-t}  and  observe the deep state.\footnote{In the case of multiple minimizers, agents agree upon a deterministic rule to choose one using $\argmin$.}  According to \eqref{eq:mfs-optimal-finite-strategy}, for any $k \in \mathcal{K}$, $i \in \mathcal{N}^k$ and $\NT$, the action (role) of agent $i$ of sub-population $k $ at time $t$ is determined by three factors:
\begin{enumerate}
 \item  global law $\boldsymbol \psi_t$ that depends on the agents'  dynamics, per-step cost, and underlying probability distributions;
 \item  deep state $\mathbf d_t$ that provides the statistical information on the states of agents,  and
 \item  local state $x^i_t$ that is private information for agent $i$ and unknown to others.
\end{enumerate}

In general, randomization can improve the performance of a fair strategy~\cite{Schoute1978}. On the other hand, any   parametrized randomized strategy  can be formulated as  a deterministic one wherein the randomization is embedded into the  transition probability and cost function, and the action space  is replaced  by the parameter space.  Therefore, finite parametrization does  not lead to an enhancement of the formulation.

\begin{Corollary}\label{remark:computational-space-time}
 The computational complexity of solving the dynamic program~\eqref{eq:thm-dp-mfs-finie-t} in both space and time is polynomial with respect to the number of agents in each sub-population $|\mathcal{N}^k|, k \in \mathcal{K},$ and is linear with respect to the control horizon~$T \in \mathbb{N}$.
\end{Corollary}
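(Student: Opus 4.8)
The plan is a straightforward counting argument resting on Theorem~\ref{thm:mfs-finite}: I would bound the size of the table on which the dynamic program~\eqref{eq:thm-dp-mfs-finie-t} is defined, then bound the arithmetic cost of a single backward Bellman update, and conclude by observing that the recursion is executed exactly $T$ times with a per-step cost that is the same at every $t$. Throughout, the number of sub-populations $K=|\mathcal{K}|$ and the alphabet sizes $|\mathcal{X}^k|,|\mathcal{U}^k|,|\mathcal{W}^k|$ are treated as fixed constants, independent of the population sizes $|\mathcal{N}^k|$ and of $T$; the assertion is that every quantity below is polynomial in the $|\mathcal{N}^k|$ with a degree fixed by these constants, and at most linear in $T$.

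\textbf{Space.} The value function $\VM_t$ is a lookup table indexed by $\mathbf d\in\MeanSpace=\prod_{k\in\mathcal{K}}\MeanSpace_{|\mathcal{N}^k|}(\mathcal{X}^k)$, and the minimizer $\boldsymbol\psi^{*}_t$ is a table of the same cardinality mapping each $\mathbf d$ to a $\boldsymbol\gamma\in\mathcal{G}$. By~\eqref{eq:mean-field-cardinality}, $|\MeanSpace|\le\prod_{k\in\mathcal{K}}(|\mathcal{N}^k|+1)^{|\mathcal{X}^k|}$, a polynomial in the $|\mathcal{N}^k|$ of total degree $\sum_{k}|\mathcal{X}^k|$, while $|\mathcal{G}|=\prod_{k\in\mathcal{K}}|\mathcal{U}^k|^{|\mathcal{X}^k|}$ is a constant. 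Hence each of these tables has polynomial size; keeping one copy of $\VM_{t+1}$, $\VM_t$ and $\boldsymbol\psi^{*}_t$ at a time makes the working memory independent of $T$, whereas retaining the full optimal strategy $\{\boldsymbol\psi^{*}_t\}_{t=1}^{T}$ costs a factor $T$. Either way the space is polynomial in the $|\mathcal{N}^k|$ and at most linear in $T$.

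\textbf{Time.} The algorithm runs $T$ backward passes; in pass $t$ it evaluates $\VM_t(\mathbf d)$ for every $\mathbf d\in\MeanSpace$ via~\eqref{eq:thm-dp-mfs-finie-t}. For fixed $\mathbf d$ this is a minimization over the constant-size set $\mathcal{G}$; for each candidate $\boldsymbol\gamma$ one forms $\phi(\mathbf d,\boldsymbol\gamma)$ in $O(\sum_k|\mathcal{X}^k|)$ operations (see~\eqref{eq:phi-def-function}), makes one evaluation of the bounded per-step cost to obtain $\LS_t(\mathbf d,\boldsymbol\gamma)$, and evaluates the expectation $\Exp{\VM_{t+1}(\bar f_t(\mathbf d,\boldsymbol\gamma,\underline{\mathbf w}_t))}$. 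By Assumption~\ref{assumption:iid}, $\underline{\mathbf w}_t$ ranges over $\underline{\mathcal{W}}=\prod_{k\in\mathcal{K}}\MeanSpace_{|\mathcal{N}^k|}(\mathcal{W}^k)$, whose cardinality is bounded exactly as in~\eqref{eq:mean-field-cardinality} by $\prod_{k}(|\mathcal{N}^k|+1)^{|\mathcal{W}^k|}$ — again polynomial in the $|\mathcal{N}^k|$ — and the weight $\Prob{\underline{\mathbf w}_t=\underline{\mathbf w}}$ factorizes over $k$ into independent multinomial probabilities ($|\mathcal{N}^k|$ trials, category distribution $\mathbb{P}_{W^k_t}$), each computable with $O(\mathrm{poly}(|\mathcal{N}^k|))$ arithmetic. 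Each summand needs one evaluation of $\bar f_t$ through~\eqref{eq:def-bar-f-k} ($O(|\mathcal{W}^k||\mathcal{X}^k|)$ per coordinate) and one lookup in $\VM_{t+1}$. Multiplying the factors, one Bellman update at a fixed $\mathbf d$ costs $\mathrm{poly}(|\mathcal{N}^k|:k\in\mathcal{K})$, pass $t$ costs $|\MeanSpace|$ times that, still polynomial, and the whole procedure costs $T$ times the cost of one pass — linear in $T$. (Equivalently, one may precompute the deep-state transition matrix~\eqref{eq:mftpm} for each $(\boldsymbol\gamma,t)$; it has $|\MeanSpace|^{2}$ entries, so the bound is unchanged.)

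\textbf{Main obstacle.} The only genuinely load-bearing point is that the \emph{number of realizations} of the deep state and of the empirical noise grows polynomially, not exponentially, in the $|\mathcal{N}^k|$: a priori the joint state lives in a space of size $\prod_k|\mathcal{X}^k|^{|\mathcal{N}^k|}$, exponential in the populations, and the passage to $\MeanSpace$ (together with the minimization being over the local laws $\mathcal{G}$ rather than over joint actions $\mathcal{U}$) is precisely what converts this into the polynomial bound~\eqref{eq:mean-field-cardinality} and its noise analogue. Everything else — constancy of $|\mathcal{G}|$, time-invariance of the per-step state space $\MeanSpace$ (which yields linearity in $T$, since no term accumulates with $t$), and the per-summand arithmetic — is routine once this reduction is in hand, and it is exactly supplied by Theorem~\ref{thm:finite-mfs-polynomial} (Markov evolution of $\mathbf d_t$ over the small space $\MeanSpace$) in combination with~\eqref{eq:mean-field-cardinality}.
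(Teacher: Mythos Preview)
Your proposal is correct and follows essentially the same counting argument as the paper: bound $|\MeanSpace|$ and $|\underline{\mathcal{W}}|$ polynomially via~\eqref{eq:mean-field-cardinality}, note that $|\mathcal{G}|$ is a constant independent of the populations, and multiply by the $T$ backward steps. The only minor difference is that you invoke Assumption~\ref{assumption:iid} to compute the weights $\Prob{\underline{\mathbf w}_t=\underline{\mathbf w}}$ as multinomials, whereas the paper (and the corollary as stated, which sits under Assumption~\ref{assumption:exchangeable} only) treats those probabilities as given inputs and simply counts the $|\underline{\mathcal{W}}|$ summands; this does not affect the argument.
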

\begin{proof}
The proof is presented in Appendix~\ref{proof:remark:computational-space-time}.
\end{proof}
\begin{Remark}
\emph{It is to be noted that the fairness of  admissible strategies is essential  for establishing  Corollary~\ref{remark:computational-space-time}.  In addition, although the computational complexity of dynamic program~\eqref{eq:thm-dp-mfs-finie-t}  is  polynomial  with respect to  the number of agents, it is  exponential with  respect to the cardinality of the local state space,   which means  that the proposed strategy is only  tractable  for  small state spaces.   When the model  has a special structure, however,   it is possible to  relax the  restriction to fair strategies as well as  to  small  state spaces. Such special structures include, for example,   linear quadratic model  wherein the state space is an infinite set and   the optimal strategy is not necessarily exchangeable~\cite{Jalal2019risk}.}
\end{Remark}


For any realization $\mathbf d_t \in \MeanSpace$, local law $\boldsymbol \gamma_t \in \mathcal{G}$,   sub-population  $ k \in \mathcal{K}$,  and   pair of states  $x,y \in \mathcal{X}^k$  at time $\NT$, define $ B^k_t(y,x, \mathbf d_t, \boldsymbol \gamma_t) \in \mathcal{P}(\{0,1,\ldots,|\mathcal{N}^k|d^k_t(x)\})$ as:
\begin{multline}\label{eq:bionomial-iid}
B^k_t(y,x, \mathbf d_t, \boldsymbol \gamma_t):= \ID{d^k_t(x)=0} \delta_0 + \ID{d^k_t(x) > 0}   \\
\times  \Binopdf\left(\boldsymbol \cdot, |\mathcal{N}^k|d^k_{t}(x), \ProbK{y \mid x, \gamma^k_t(x),\phi(\mathbf d_t,\boldsymbol \gamma_t)}\right),
\end{multline}
where  $\mathbb{P}^k$ and $\phi$  are given by~\eqref{eq:relation-models} and~\eqref{eq:joint mean-field-proof-1}, respectively.  Furthermore, define $\bar B^k_t(y, \mathbf d_t, \boldsymbol \gamma_t) \in  \mathcal{P}(\{0,1,\ldots,|\mathcal{N}^k|\})$  as the convolution of the vector-valued functions $B^k_t(y,x, \mathbf d_t, \boldsymbol \gamma_t)$  over all states $x\in \mathcal{X}^k=\{ x_1,\ldots,x_{|\mathcal{X}^k|}\}$, i.e., 
 \begin{equation}\label{eq:iid-B-conv}
\bar B^k_t(y,\mathbf d_t, \boldsymbol \gamma_t):= B^k_t(y,x_1, \mathbf d_t, \boldsymbol \gamma_t) \ast \ldots \ast  B^k_t(y,x_{|\mathcal{X}^k|}, \mathbf d_t, \boldsymbol \gamma_t).
\end{equation}
\begin{Theorem}\label{thm:mfs-finite-independent}
Let  Assumption~\ref{assumption:iid} hold.  The transition probability matrix of  the deep state  can be computed  efficiently in time  such that for any $\mathbf d_t \in \MeanSpace$, $\boldsymbol \gamma_t \in \mathcal{G}$, $k \in \mathcal{K}$,   $y \in \mathcal{X}^k$  and  $j \in \mathbb{N}_{|\mathcal{N}^k|+1}$ at time $\NT$, we have:
\begin{equation}\label{eq:DCK}
 \Prob{d^k_{t+1}(y)= \frac{j-1}{|\mathcal{N}^k|}\mid \mathbf d_t, \boldsymbol \gamma_t}=\bar  B^k_t(y,\mathbf d_t, \boldsymbol \gamma_t)(j).
\end{equation}
\end{Theorem}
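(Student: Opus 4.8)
The plan is to start from the microscopic identity for the deep-state dynamics already established in Theorem~\ref{thm:finite-mfs-polynomial}, namely equation~\eqref{eq:mfs-proof-poly-1}, which gives, almost surely, $|\mathcal{N}^k|\,d^k_{t+1}(y)=\sum_{i\in\mathcal{N}^k}\ID{f^k_t(x^i_t,\gamma^k_t(x^i_t),\boldsymbol{\mathfrak{D}}_t,w^i_t)=y}$, and then to regroup this sum according to the current local state of each agent. Since the object of interest is the transition kernel $\Prob{\mathbf d_{t+1}\mid \mathbf d_t,\boldsymbol\gamma_t}$ of~\eqref{eq:mftpm}, and since by Remark~\ref{remark:control_law_independence} the deep-state evolution is independent of the global laws $\boldsymbol\psi_{1:t}$, it suffices to fix an arbitrary realization $\mathbf d_t \in \MeanSpace$ and local law $\boldsymbol\gamma_t \in \mathcal{G}$ and compute the conditional law of $|\mathcal{N}^k|\,d^k_{t+1}(y)$. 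Conditioning on $\mathbf d_t$ freezes, for each $x\in\mathcal{X}^k$, the number of agents in state $x$ at $|\mathcal{N}^k|\,d^k_t(x)$, and by~\eqref{eq:proof-joint-mean-field-1} also freezes $\boldsymbol{\mathfrak{D}}_t=\phi(\mathbf d_t,\boldsymbol\gamma_t)$.

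Next I would analyse, for a fixed state $x\in\mathcal{X}^k$ with $d^k_t(x)>0$, the block sum $N^k_t(y,x):=\sum_{i\in\mathcal{N}^k:\,x^i_t=x}\ID{f^k_t(x,\gamma^k_t(x),\phi(\mathbf d_t,\boldsymbol\gamma_t),w^i_t)=y}$. Each summand is a Bernoulli variable whose success probability, after averaging over the noise law $\mathbb{P}_{W^k_t}$ (Assumption~\ref{assumption:iid}) and invoking the definition~\eqref{eq:relation-models} of $\mathbb{P}^k$, equals $\ProbK{y\mid x,\gamma^k_t(x),\phi(\mathbf d_t,\boldsymbol\gamma_t)}$; here one uses that the primitive noises $\mathbf w_t$ are independent of $\{\mathbf x_1,\mathbf w_{1:t-1}\}$, hence of $\mathbf x_t$ (equivalently of $\mathbf d_t$) and of the state-measurable $\boldsymbol\gamma_t$, so the conditioning does not perturb their distribution. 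Because the noises $(w^i_t)_{i\in\mathcal{N}^k}$ are mutually independent (Assumption~\ref{assumption:iid}) and every agent in state $x$ applies the same action $\gamma^k_t(x)$ through the same map $f^k_t$, the summands in $N^k_t(y,x)$ are i.i.d.\ Bernoulli, so $N^k_t(y,x)$ is $\mathrm{Binomial}\big(|\mathcal{N}^k|\,d^k_t(x),\,\ProbK{y\mid x,\gamma^k_t(x),\phi(\mathbf d_t,\boldsymbol\gamma_t)}\big)$, i.e.\ has exactly the law $B^k_t(y,x,\mathbf d_t,\boldsymbol\gamma_t)$ of~\eqref{eq:bionomial-iid}; the degenerate case $d^k_t(x)=0$ gives $N^k_t(y,x)=0$ a.s., matching the $\delta_0$ branch.

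Finally I would assemble the blocks via $|\mathcal{N}^k|\,d^k_{t+1}(y)=\sum_{x\in\mathcal{X}^k}N^k_t(y,x)$, noting that the blocks attached to distinct states $x$ are built from disjoint families of noise variables $(w^i_t)$ and are therefore mutually independent (again Assumption~\ref{assumption:iid}, the state-count profile being frozen by $\mathbf d_t$). Hence the pmf of $|\mathcal{N}^k|\,d^k_{t+1}(y)$ is the convolution of the pmfs $B^k_t(y,x_1,\mathbf d_t,\boldsymbol\gamma_t),\ldots,B^k_t(y,x_{|\mathcal{X}^k|},\mathbf d_t,\boldsymbol\gamma_t)$, which is precisely $\bar B^k_t(y,\mathbf d_t,\boldsymbol\gamma_t)$ of~\eqref{eq:iid-B-conv}, supported on $\{0,1,\ldots,|\mathcal{N}^k|\}$ because $\sum_x|\mathcal{N}^k|\,d^k_t(x)=|\mathcal{N}^k|$. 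Rewriting the event $\{d^k_{t+1}(y)=(j-1)/|\mathcal{N}^k|\}$ as $\{|\mathcal{N}^k|\,d^k_{t+1}(y)=j-1\}$ for $j\in\mathbb{N}_{|\mathcal{N}^k|+1}$ then gives~\eqref{eq:DCK}. The only delicate point is the conditional-independence bookkeeping in the first two paragraphs — making precise that conditioning on $(\mathbf d_t,\boldsymbol\gamma_t)$ both preserves the product structure of the current noises and merely fixes the state-count profile (so that the resulting law is insensitive to which particular agents occupy which states, which also follows from Assumption~\ref{assumption:exchangeable}) — but this is a routine consequence of the mutual independence of the primitive random variables together with Remark~\ref{remark:control_law_independence}.
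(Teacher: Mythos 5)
Your proposal is correct and follows essentially the same route as the paper's proof: decompose $|\mathcal{N}^k|\,d^k_{t+1}(y)$ into per-state blocks of i.i.d.\ Bernoulli indicators with success probability $\ProbK{y \mid x, \gamma^k_t(x),\phi(\mathbf d_t,\boldsymbol \gamma_t)}$, identify each block as the binomial $B^k_t(y,x,\mathbf d_t,\boldsymbol\gamma_t)$ (with the $\delta_0$ degenerate case), and convolve across states using independence of the disjoint noise families. Your explicit bookkeeping of why conditioning on $(\mathbf d_t,\boldsymbol\gamma_t)$ only freezes the state-count profile without perturbing the noise law is a point the paper leaves implicit, but it is the same argument.
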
 
\begin{proof}  
From  equations~\eqref{eq:joint mean-field-proof-1} and~\eqref{eq:mfs-proof-poly-1},  it results that for  any $\mathbf d_t \in \MeanSpace$, $\boldsymbol \gamma_t \in \mathcal{G}$, $k \in \mathcal{K}$ and $y \in \mathcal{X}^k$  at time $\NT$:
\begin{multline}\label{eq:iid-proof-1}
|\mathcal{N}^k| d^k_{t+1}(y)=  \sum_{x \in \mathcal{X}^k}\SUMI \ID{x^i_t=x}\\
\times \ID{f^k_t(x,\gamma^k_t(x),\phi(\mathbf d_t,\boldsymbol \gamma_t),w^i_t)=y}.
\end{multline}
For any sub-population  $k \in \mathcal{K}$ and any pair of states $x,y \in \mathcal{X}^k$  at time $t \in \mathbb{N}_T$, define $\tilde B^k_t(y,x, \mathbf d_t, \boldsymbol \gamma_t) \in  \mathcal{P}(\{0,1\})$ as the probability distribution function of the binary random variable $\ID{f^k_t(x,\gamma^k_t(x),\phi(\mathbf d_t,\boldsymbol \gamma_t),w^i_t)=y}$. From  \eqref{eq:dynamics-f-mean-field}, \eqref{eq:relation-models}, and~\eqref{eq:gamma-u}, one has
\begin{multline}\label{eq:proof-iid-1}
\Prob{\ID{f^k_t(x,\gamma^k_t(x),\phi(\mathbf d_t,\boldsymbol \gamma_t),w^i_t)=y}=1 \mid  \mathbf d_t, \boldsymbol \gamma_t} \\
=\ProbK{y \mid x, \gamma^k_t(x),\phi(\mathbf d_t,\boldsymbol \gamma_t)}.
\end{multline}
Thus,   it follows  from~\eqref{eq:proof-iid-1} that
\begin{multline}\label{eq:proof-iid-b}
\tilde B^k_t(y, x, \mathbf d_t, \boldsymbol \gamma_t)=  (1 - \ProbK{y \mid x, \gamma^k_t(x),\phi(\mathbf d_t,\boldsymbol \gamma_t)})\delta_0 \\
+ \ProbK{y \mid x, \gamma^k_t(x),\phi(\mathbf d_t,\boldsymbol \gamma_t)}\delta_1.
\end{multline}
 Let  $ B^k_t(y,x, \mathbf d_t, \boldsymbol \gamma_t) \in \mathcal{P}(\{0,1,\ldots,|\mathcal{N}^k|d^k_t(x)\})$ denote  the probability distribution function of the sum  of $|\mathcal{N}^k| d^k_t(x)$ binary random variables associated with state $x$ in~\eqref{eq:iid-proof-1}, i.e., 
$B^k_t(y,x, \mathbf d_t, \boldsymbol \gamma_t) =\mathbb{P}(\SUMI \ID{f^k_t(x,\gamma^k_t(x), \phi(\mathbf d_t,\boldsymbol \gamma_t),w^i_t)=y})$.
Therefore, if $d^k_t(x)=0,$ it means $B^k_t(y,x, \mathbf d_t, \boldsymbol \gamma_t)=\delta_0$; otherwise,  from equation~\eqref{eq:proof-iid-b}, Assumption~\ref{assumption:iid}, and the fact that the probability distribution function of the sum of  independent random variables is  equal to the convolution of their probability distribution functions, we have
\begin{equation}\label{eq:proof-iid-barb}
B^k_t(y,x, \mathbf d_t, \boldsymbol \gamma_t) =\underbrace{\tilde B^k_t(y, x, \mathbf d_t, \boldsymbol \gamma_t)\ast \ldots \ast \tilde B^k_t(y, x, \mathbf d_t, \boldsymbol \gamma_t)}_{|\mathcal{N}^k| d^k_t(x)}.
\end{equation}
Let  $\bar B^k_t(y,\mathbf d_t, \boldsymbol \gamma_t) \in  \mathcal{P}(\{0,1,\ldots,|\mathcal{N}^k|\})$   be the probability distribution  function of \eqref{eq:iid-proof-1}  (that is the sum of  $|\mathcal{N}^k|$ independent binary random variables); hence,  it is the convolution of  $B^k_t(y,x, \mathbf d_t, \boldsymbol \gamma_t) $ over all states $x\in \mathcal{X}^k$.  Notice that  $ B^k_t(y,x, \mathbf d_t, \boldsymbol \gamma_t)$ is an $|\mathcal{N}^k|d^k_t(x)-$fold convolution power of $\tilde B^k_t(y, x, \mathbf d_t, \boldsymbol \gamma_t)$, where $\tilde B^k_t(y, x, \mathbf d_t, \boldsymbol \gamma_t)$ is a binomial probability distribution function with success probability $\ProbK{y \mid x, \gamma^k_t(x), \phi(\mathbf d_t,\boldsymbol \gamma_t)}$.  Thus,  $ B^k_t(y,x, \mathbf d_t, \boldsymbol \gamma_t)$ is also a binomial probability distribution  with $|\mathcal{N}^k| d^k_{t}(x)$ trials and success probability $\ProbK{y \mid x, \gamma^k_t(x),\phi(\mathbf d_t,\boldsymbol \gamma_t)}$,  given by~\eqref{eq:bionomial-iid}. 
\end{proof}
\subsection{Deep Chapman-Kolmogorov (DCK) equation}\label{remark:chapman}
According to Theorem~\ref{thm:mfs-finite-independent}, the structure of  the transition probability matrix of  the deep state  involves multiple  convolution functions in~\eqref{eq:iid-B-conv}  and several Binomial probability distributions in~\eqref{eq:bionomial-iid},  triggered by some activation functions.
This structure resembles  a deep  neural network with convolutional layers and  Gaussian filters, which obeys a similar invariance feature called spatial invariance. Inspired by this resemblance, we refer to~\eqref{eq:DCK} as the \emph{Deep Chapman-Kolmogorov} (DCK) equation, which can be traced back to  the  seminal work  of  Bogolyubov and Krylov (see~\cite{bogolyubov1994nn}). In what follows,   we present some of  the special cases of the  DCK equation. 

 Consider a homogeneous population with no control action, where $\mathcal{K}$ is a singleton set. The following  holds:
\begin{itemize}
\item For the single-agent case,  the local state  and   deep state have equivalent information, meaning that the single-agent DCK can be represented equivalently in the form  of the (classical) Chapman-Kolmogorov equation.
\item  For the infinite-population case with coupled dynamics,   deep state  $d_t$ appears  in the transition probability matrix as a diffusion term; hence, the macroscopic DCK equation  may be viewed as the   discrete-time discrete-space  counterpart of  the Fokker-Planck-Kolmogorov  and master equations  in the continuous-time models. For the special case of decoupled dynamics,  the macroscopic DCK  equation simplifies to the classical Chapman-Kolmogorov equation, i.e.,  $d_{t+1}(y) =\sum_{x \in \mathcal{X}} d_t(x) \Prob{y \mid x}=\sum_{x \in \mathcal{X}} \Prob{y \mid x}  \Prob{x \mid z}$, 
  where $d_{t-1}:=\delta_z$, $y,z \in \mathcal{X}$. Consequently,  the  limit of  deep state $d_{t+1}(\boldsymbol \cdot)$,  as the population grows, which is in fact,  mean-field,   can be interpreted as the infinite-sample distribution of a generic  random variable $x_{t+1}$ with the  transition probability $\Prob{x_{t+1}=y|x_{t-1}=z}$.
\end{itemize}
For more details on the connection with deep neural networks, the interested  reader is referred to~\cite{Jalal2019risk,Jalal2019TSNE}.

\section{Main results for Problem~\ref{prob:PMFS-finite}}\label{sec:main_results_pmfs}
So far, we have assumed that the deep states of all sub-populations are shared among agents. However, for large sub-populations it might be difficult to collect and share their deep states among agents. In such cases, we are interested in  Problem~\ref{prob:PMFS-finite}, where the deep states of some sub-populations $\mathcal{S}^c \subseteq \mathcal{K}$ are not observed. We propose a sub-optimal   strategy,  where the optimality gap converges to zero as the size of sub-populations $\mathcal{S}^c$ goes to infinity.  We impose Assumptions \ref{assumption:iid} and \ref{assumption:iid-x} on the primitive random variables and the following two  assumptions on the model.   
\begin{Assumption}\label{assumption:decoupled}
The~deep states~of~sub-populations~$\mathcal{S}^c$ do not affect the dynamics of~the~agents~of~sub-populations~$\mathcal{S}$.
\end{Assumption}
\begin{Remark}
\emph{
Notice  that the deep states of  sub-populations~$\mathcal{S}$ can affect the dynamics of   the agents of sub-populations $\mathcal{S}^c$. In addition,
 Assumption~\ref{assumption:decoupled} automatically holds for $\mathcal{S}=\emptyset$ (i.e., when the information structure is completely decentralized). }
\end{Remark}
An immediate implication of Assumption~\ref{assumption:decoupled} is that  the following relationship  holds for  $\bar f^k_t$ in~\eqref{eq:def-bar-f-k} for any  $k \in \mathcal{S}$,  $\NT$, $\mathbf z \in \IX, \boldsymbol \gamma \in \mathcal{G}$ and  $\underline{\mathbf w}_t \in \underline{\mathcal{W}}$:
\begin{equation}\label{eq:pmfs-assumption-decoupled}
\bar f^k_t((z^k)_{k \in \mathcal{S}},\boldsymbol  \gamma,\MW)=\bar f^k_t(\mathbf z,\boldsymbol  \gamma,\MW).
\end{equation}
 \begin{Assumption}\label{assumption:Lipschitz-finite}
For every $x,y \in \mathcal{X}^k$, $u \in \mathcal{U}^k$, $k \in \mathcal{K}$  and $\Bz_1, \Bz_2 \in  \prod_{k \in \mathcal{K}} \mathcal{I}(\mathcal{X}^k \times \mathcal{U}^k)$, there exist positive real constants $\LipK{1}_t,\Lip{2}_t$ (independent of $|\mathcal{N}^k|$), such that
%
\begin{align}
\big|\ProbK{y\mid x,u,\Bz_1} - \ProbK{y\mid x,u,\Bz_2}  \big| &\leq \LipK{1}_t\Ninf{\Bz_1 -\Bz_2},\\
\big| \bar c_t(\Bz_1) -\bar c_t(\Bz_2)\big| &\leq \Lip{2}_t\Ninf{\Bz_1 -\Bz_2}.
\end{align}
 \end{Assumption}
 \begin{Remark}
\emph{Any  polynomial function of $\Bz$ is a Lipschitz  function  in $\Bz$ because $\Bz$ is confined to a bounded interval.   It is worth highlighting that  any continuous function can be  approximated as closely as desired by polynomial functions  according to the Weierstrass Approximation Theorem. }
 \end{Remark}
To distinguish from the optimal solution under DSS, we use superscript $p$ to denote  the parameters associated with the sub-optimal  solution under PDSS. Denote by $\mathbf g^p$ the  sub-optimal strategy  and by $\mathbf d_t^p \in \MeanSpace$  the deep state of the system under strategy $\mathbf g^p$ at $\NT$. Similar to Section~\ref{sec:main-result-problem-finite},  we split the strategy $\mathbf{g}^p$ into two parts as follows.  For every sub-population $k \in \mathcal{K}$ and any  $(d^{p,k}_{1:t})_{k \in \mathcal{S}}$, $t \in \mathbb{N}_T$, define:
\begin{equation} \label{eq:psi-gamma-pmfs}
\begin{cases}
\psi^{p,k}_t((d^{p,k}_{1:t})_{k \in \mathcal{S}}):= g^{p,k}_t(\boldsymbol \cdot,(d^{p,k}_{1:t})_{k \in \mathcal{S}}),\\
\gamma^{p,k}_t:=\psi^{p,k}_t(( d^{p,k}_{1:t})_{k \in \mathcal{S}}),
\end{cases}
\end{equation}
where   $\gamma^{p,k}_t:\mathcal{X}^k \rightarrow \mathcal{U}^k$, $k \in\mathcal{K}$.  Therefore, the control action of agent $i \in \mathcal{N}^k$ of sub-population~$k \in \mathcal{K}$ under PDSS  is given by:
\begin{equation}\label{eq:gamma-u-pmfs}
u^i_t=g^{p,k}_t(x^i_t,(d^{p,k}_{1:t})_{k \in \mathcal{S}})=  \psi^{p,k}_t(( d^{p,k}_{1:t})_{k \in \mathcal{S}})(x^i_t)=\gamma^{p,k}_t(x^i_t).
\end{equation}
Notice that the local laws  in~\eqref{eq:gamma-u-pmfs} are different from those   in~\eqref{eq:gamma-u} because the information structures are different. However, the finite space $\mathcal{G}$ is the same for both DSS and PDSS, i.e., $\boldsymbol{\gamma}^p_t:=\{\gamma^{p,1}_t,\ldots,\gamma^{p,K}_t\} \in \mathcal{G}$. Since Theorem~\ref{thm:finite-mfs-polynomial} holds regardless of the global laws $\boldsymbol \psi_{1:t}$ (Remark~\ref{remark:control_law_independence}), it  also holds for $(\psi^{p,1}_{1:t}, \ldots, \psi^{p,K}_{1:t})$  as a restriction function of $\boldsymbol \psi_{1:t}$.   Hence,  the dynamics of the deep state of sub-population $k \in \mathcal{K}$ at time $\NT$ can be written as follows, according to Theorem~\ref{thm:finite-mfs-polynomial}:
\begin{equation}\label{eq:definition-bar f-hat-m-K}
 {d}^{p,k}_{t+1} \hspace{.05cm}  \substack{{a.s.}\\=}  \hspace{.05cm} \bar f^k_t(\mathbf{d}^p_t, \boldsymbol{\gamma}^p_t,
\MW).
\end{equation}
In the augmented form, we have
\begin{equation}\label{eq:definition-bar f-pmfs}
 \mathbf {d}^p_{t+1} \substack{{a.s.}\\=}\bar f_t(\mathbf{ d}^p_t, \boldsymbol{\gamma}^p_t, 
\underline{\mathbf w}_t).
\end{equation}
The remainder of this section is organized as follows. We first introduce a stochastic process that is controlled by local laws $\boldsymbol{\gamma}^p_{1:T}$ and is implementable under PDSS. Then,   we  propose a dynamic program based on this process and  show  that  its solution  is sub-optimal, by comparing its performance  with  the optimal solution of Theorem~\ref{thm:mfs-finite}.
\subsection{A mixed state}\label{sec:aux}  
Let $m^k_t \in \mathcal{P}(\mathcal{X}^k)$ denote the mean-field of sub-population $k \in \mathcal{S}^c$ at time $\NT$, where $|\mathcal{N}^k|$ is regarded as infinity. Define a mixed state $\mathbf p_t \in \DeltaS$ consisting of the deep states of  sub-populations $\mathcal{S}$ and the mean-fields of sub-populations $\mathcal{S}^c$, i.e., 
\begin{equation}\label{eq:aux-dynamics-2}
p^k_1:=d^{p,k}_1=d^k_1, \quad k \in \mathcal{S}, \quad \text{and} \quad  p^k_1:=m^k_1=\mathbb{P}_{X^k_1}, \quad k \in \mathcal{S}^c.
\end{equation}
Moreover,  for $t >1 $,  $\mathbf p_t  $ evolves under  $
\boldsymbol \gamma^p_t \in \mathcal{G}$ as follows:
\begin{equation}\label{eq:aux-dynamics-1}
p^k_{t+1}:=\begin{cases}
\bar f^k_t(\mathbf p_t,\boldsymbol \gamma^p_t , \MW), & k \in \mathcal{S}, \\
\FS^k_t(\mathbf p_t, \boldsymbol \gamma^p_t ), & k \in \mathcal{S}^c,
\end{cases}
\end{equation}
where $\bar f^k_t$  and  $\FS^k_t$ are  given by \eqref{eq:def-bar-f-k} and  \eqref{eq:hat-f-def}, respectively.  Denote by $\bar f^{\mathcal{S}}_t, \NT,$ the augmented form of \eqref{eq:aux-dynamics-1} as follows:
\begin{equation}\label{eq:def-tilde-f-pmfs}
\mathbf p_{t+1}=\bar f^{\mathcal{S}}_t(\mathbf p_t,\boldsymbol \gamma^p_t ,\underline{\mathbf w}_t).
\end{equation}
\begin{Proposition}\label{prop:adaptable-mean-field-pmfs}
 Under  Assumptions~\ref{assumption:iid}, \ref{assumption:iid-x}, and~\ref{assumption:decoupled},  the  stochastic process $\mathbf{p}_{1:t}$  is adapted to  the filtration generated by $(d^{p,k}_{1:t})_{k \in \mathcal{S}}$. In particular,  $\mathbf p_1$ is specified  by \eqref{eq:aux-dynamics-2},  and for  any  $\NT$: $
p^k_{t+1} \substack{a.s. \\=} d^{p,k}_{t+1}, k \in \mathcal{S}$ and  $
p^k_{t+1} = \FS^k_t( \mathbf{p}_t, \boldsymbol \gamma^p_t ), k \in \mathcal{S}^c$.
\end{Proposition}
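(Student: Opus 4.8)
The plan is to prove the proposition by induction on $t \in \mathbb{N}_T$, carrying along three statements simultaneously: (i) $p^k_{t} = d^{p,k}_{t}$ \emph{a.s.} for every $k \in \mathcal{S}$; (ii) for $t\ge 2$, $p^k_{t} = \FS^k_{t-1}(\mathbf{p}_{t-1},\boldsymbol{\gamma}^p_{t-1})$ for every $k \in \mathcal{S}^c$; and (iii) $\mathbf{p}_{t}$ is measurable with respect to $\sigma\big((d^{p,k}_{1:t})_{k \in \mathcal{S}}\big)$, which is exactly adaptedness to the stated filtration. The base case $t=1$ is immediate from \eqref{eq:aux-dynamics-2}: for $k \in \mathcal{S}$ one has $p^k_1 = d^{p,k}_1$ by definition, and for $k \in \mathcal{S}^c$ one has $p^k_1 = \mathbb{P}_{X^k_1}$, which is a fixed (non-random) distribution by Assumption~\ref{assumption:iid-x}; hence $\mathbf{p}_1$ is trivially $\sigma\big((d^{p,k}_1)_{k\in\mathcal{S}}\big)$-measurable.

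For the inductive step, suppose (i)--(iii) hold up to time $t$. By \eqref{eq:psi-gamma-pmfs}--\eqref{eq:gamma-u-pmfs} the realized local law $\boldsymbol{\gamma}^p_t$ is a function of $(d^{p,k}_{1:t})_{k\in\mathcal{S}}$, hence $\sigma\big((d^{p,k}_{1:t})_{k\in\mathcal{S}}\big)$-measurable, and it is the same object that drives both the mixed-state recursion \eqref{eq:aux-dynamics-1} and the true deep-state recursion \eqref{eq:definition-bar f-hat-m-K}. Take $k \in \mathcal{S}$. From \eqref{eq:aux-dynamics-1}, $p^k_{t+1} = \bar f^k_t(\mathbf{p}_t,\boldsymbol{\gamma}^p_t,\underline{w}^k_t)$; by the decoupling identity \eqref{eq:pmfs-assumption-decoupled} the right-hand side depends on $\mathbf{p}_t$ only through its $\mathcal{S}$-block $(p^l_t)_{l\in\mathcal{S}}$, which by the induction hypothesis equals $(d^{p,l}_t)_{l\in\mathcal{S}}$ \emph{a.s.}; applying \eqref{eq:pmfs-assumption-decoupled} a second time to re-insert the (immaterial) $\mathcal{S}^c$-block of the actual deep state yields $p^k_{t+1} = \bar f^k_t(\mathbf{d}^p_t,\boldsymbol{\gamma}^p_t,\underline{w}^k_t)$ \emph{a.s.}, which is precisely $d^{p,k}_{t+1}$ \emph{a.s.} by \eqref{eq:definition-bar f-hat-m-K} (a consequence of Theorem~\ref{thm:finite-mfs-polynomial} which, by Remark~\ref{remark:control_law_independence}, holds for the restricted global laws $\boldsymbol{\psi}^p_{1:t}$). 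For $k \in \mathcal{S}^c$, the identity $p^k_{t+1} = \FS^k_t(\mathbf{p}_t,\boldsymbol{\gamma}^p_t)$ is simply the second branch of \eqref{eq:aux-dynamics-1}, so (ii) holds at time $t+1$.

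To close the induction it remains to propagate measurability. For $k\in\mathcal{S}$, $p^k_{t+1}$ equals $d^{p,k}_{t+1}$ \emph{a.s.} and is therefore $\sigma\big((d^{p,l}_{1:t+1})_{l\in\mathcal{S}}\big)$-measurable. For $k\in\mathcal{S}^c$, $p^k_{t+1}$ is the deterministic image under $\FS^k_t$ of the pair $(\mathbf{p}_t,\boldsymbol{\gamma}^p_t)$, and both entries are $\sigma\big((d^{p,l}_{1:t})_{l\in\mathcal{S}}\big)$-measurable by the induction hypothesis and the observation above. Hence $\mathbf{p}_{t+1}$ is $\sigma\big((d^{p,l}_{1:t+1})_{l\in\mathcal{S}}\big)$-measurable, which completes the induction and establishes the adaptedness claim.

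The only delicate point is the twofold use of the decoupling assumption in the $k\in\mathcal{S}$ step: one must check that $\bar f^k_t$ is evaluated with the same local law $\boldsymbol{\gamma}^p_t$ and the same noise empirical distribution $\underline{w}^k_t$ in \eqref{eq:aux-dynamics-1} and in \eqref{eq:definition-bar f-hat-m-K}, so that the two expressions agree pathwise once the $\mathcal{S}^c$-blocks have been shown to be irrelevant; this is exactly where Assumption~\ref{assumption:decoupled} is indispensable, since without it the unobserved deep states $(d^{p,l}_t)_{l\in\mathcal{S}^c}$ -- which the mixed state does not attempt to track -- would enter the recursion for $k\in\mathcal{S}$. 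Assumptions~\ref{assumption:iid} and~\ref{assumption:iid-x} are needed only to make sense of the mean-field initialization in \eqref{eq:aux-dynamics-2} and of the averaged map $\FS^k_t$; no law-of-large-numbers or concentration argument is required here, as the proposition is an \emph{exact}, finite-population identity.
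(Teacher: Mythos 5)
Your proof is correct and follows essentially the same route as the paper: induction on $t$, using the decoupling identity \eqref{eq:pmfs-assumption-decoupled} to show that the $\mathcal{S}$-components of $\mathbf p$ and $\mathbf d^p$ evolve pathwise identically under the same local laws and noise realizations, and then propagating measurability through \eqref{eq:psi-gamma-pmfs} and the deterministic update $\FS^k_t$ for $k\in\mathcal{S}^c$. Your write-up is somewhat more explicit than the paper's (in particular about the twofold application of the decoupling identity and the role of each assumption), but there is no substantive difference in approach.
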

\begin{proof}
The proof is presented in Appendix~\ref{sec:proof_prop:adaptable-mean-field-pmfs}.
\end{proof}

\subsection{Dynamic program for Problem~\ref{prob:PMFS-finite}}
We now propose  a dynamic program for Problem~\ref{prob:PMFS-finite}.  Define  real-valued functions $\{\VP_1,\ldots,\VP_T, \VP_{T+1}\}$ backward in time such that
$\VP_{T+1}(\mathbf{p})=~0$ for any $ \mathbf p \in \DeltaS.$ Also,
for any $\NT$ and  $\mathbf p \in \DeltaS$, define
\begin{equation}\label{eq:thm-dp-pmfs-finie-t}
\VP_t(\mathbf{p})=\min_{\boldsymbol \gamma \in \mathcal{G}} \big( \LS_t\left(\mathbf p, \boldsymbol \gamma \right) + \mathbb{E}[\VP_{t+1}(\bar f^{\mathcal{S}}_t(\mathbf p,\boldsymbol \gamma,\underline{\mathbf w}_t))]\big).
\end{equation}
The right-hand side of \eqref{eq:thm-dp-pmfs-finie-t} admits at least one minimizer because $\mathbf p_{t} \in \DeltaS$  can be expressed by $(d^{p,k}_{1:t})$ and $\boldsymbol \gamma^p_{1:t-1}$  according to Proposition~\ref{prop:adaptable-mean-field-pmfs}, implying that its  feasible set is finite.  As a result, the  minimization in \eqref{eq:thm-dp-pmfs-finie-t} is a search  over finite alphabets,  meaning that it always has a minimizer (solution). With a slight  abuse of notation,  let  $\boldsymbol{\psi}^p_t(\mathbf{p}):=\{\psi^{p,1}_t(\mathbf{p}),\ldots,  \psi^{p,K}_t(\mathbf{p})\}$  be one of the minimizers of the right-hand side of \eqref{eq:thm-dp-pmfs-finie-t}.  The proposed control law for agent $i \in \mathcal{N}^k$ of sub-population $k \in \mathcal{K}$ at time $t \in \mathbb{N}_T$ is given by:
\begin{equation}\label{eq:strategy-pmfs-sub-optimal}
u^i_t=g^{p,k}_t(x^i_t, \mathbf p_t):= \psi^{p,k}_t(\mathbf p_t)(x^i_t), \quad x^i_t \in \mathcal{X}^k, \mathbf p_{t} \in \DeltaS.
\end{equation}  
\begin{Remark}
\emph{An important feature of  the dynamic program~\eqref{eq:thm-dp-pmfs-finie-t} is that it is independent of the size of sub-populations $\mathcal S^c$, and so is the  strategy~$\mathbf g^p$. This is due to the fact that  $\DeltaS$, $\LS_t$ and $\bar f^{\mathcal{S}}_t$ given, respectively,  by    \eqref{eq:def-spaces-short}, \eqref{eq:hat-l-def}, and \eqref{eq:aux-dynamics-2}  are all independent of  $|\mathcal{N}^k|, k \in \mathcal S^c$.}
\end{Remark}

Compared to the DSS value function~\eqref{eq:thm-dp-mfs-finie-t} and strategy~\eqref{eq:mfs-optimal-finite-strategy}, the source of error  in the proposed  PDSS value function~\eqref{eq:thm-dp-pmfs-finie-t} and strategy~\eqref{eq:strategy-pmfs-sub-optimal} is the substitution of   the  model with dynamics $\bar f_{1:T}$ and deep state process $\mathbf d_{1:T}$  by a model with dynamics $\bar f^{\mathcal{S}}_{1:T}$ and mixed-state process $\mathbf p_{1:T}$,  respectively. In what follows,  we  show that the propagation of   such error  in the dynamics and value functions are bounded by  Lipschitz gains related to those  introduced  in Assumption~\ref{assumption:Lipschitz-finite}.

\subsection{Upper bounds on the  dynamics and value functions}\label{sec:lemmas}
\begin{Lemma}\label{lemma:lipschitz-hat-f-hat-l}
Let  Assumption~\ref{assumption:Lipschitz-finite} hold.  There exist positive constants $\Lip{3}_t, \Lip{4}_t$, $k \in \mathcal{K}$, $t \in \mathbb{N}_T$ (independent of $|\mathcal{N}^k|$),  such that for any $\mathbf z_1, \mathbf z_2 \in \IX$ and  $\boldsymbol \gamma \in \mathcal{G}$, we have
\begin{align}
\| \FS^k_t(\mathbf z_1, \boldsymbol \gamma) - \FS^k_t(\mathbf z_2, \boldsymbol \gamma)\| &\leq \Lip{3}_t \Ninf{ \mathbf z_1 - \mathbf z_2},\\
\big| \LS_t(\mathbf z_1, \boldsymbol \gamma) - \LS_t(\mathbf z_2, \boldsymbol \gamma)  \big| &\leq \Lip{4}_t \Ninf{ \mathbf z_1 - \mathbf z_2}.
\end{align}
\end{Lemma}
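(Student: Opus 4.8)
The plan is to derive both inequalities by composing the hypotheses of Assumption~\ref{assumption:Lipschitz-finite} with the elementary observation that the map $\mathbf z \mapsto \phi(\mathbf z,\boldsymbol\gamma)$ is non-expansive, uniformly over $\boldsymbol\gamma \in \mathcal{G}$. The first step I would carry out is to check, directly from the definition of $\phi$ in~\eqref{eq:phi-def-function}, that
\begin{equation}
\Ninf{\phi(\mathbf z_1,\boldsymbol\gamma) - \phi(\mathbf z_2,\boldsymbol\gamma)} \;\le\; \Ninf{\mathbf z_1 - \mathbf z_2}, \qquad \forall\, \mathbf z_1,\mathbf z_2 \in \IX,\ \boldsymbol\gamma \in \mathcal{G},
\end{equation}
which is immediate because the $(k,x,u)$-coordinate of the difference equals $\big(z_1^k(x)-z_2^k(x)\big)\ID{u=\gamma^k(x)}$, whose modulus is at most $|z_1^k(x)-z_2^k(x)| \le \Ninf{\mathbf z_1-\mathbf z_2}$; note also that $\phi(\mathbf z,\boldsymbol\gamma)$ lands in $\prod_{k}\mathcal{I}(\mathcal{X}^k\times\mathcal{U}^k)$, the domain on which Assumption~\ref{assumption:Lipschitz-finite} is posed.

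Given this, the estimate for $\LS_t$ is a one-line composition: by~\eqref{eq:hat-l-def} and the Lipschitz hypothesis on the per-step cost, $|\LS_t(\mathbf z_1,\boldsymbol\gamma)-\LS_t(\mathbf z_2,\boldsymbol\gamma)| = |\bar c_t(\phi(\mathbf z_1,\boldsymbol\gamma))-\bar c_t(\phi(\mathbf z_2,\boldsymbol\gamma))| \le \Lip{2}_t\,\Ninf{\phi(\mathbf z_1,\boldsymbol\gamma)-\phi(\mathbf z_2,\boldsymbol\gamma)} \le \Lip{2}_t\,\Ninf{\mathbf z_1-\mathbf z_2}$ ($\bar c_t$ denoting the extension of $c_t$ to the hypercube, as in Assumption~\ref{assumption:Lipschitz-finite}), so one takes $\Lip{4}_t := \Lip{2}_t$. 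For $\FS^k_t$ the key step is a telescoping (add-and-subtract) decomposition of~\eqref{eq:hat-f-def}: for fixed $y\in\mathcal{X}^k$,
\begin{align}
\FS^k_t(\mathbf z_1,\boldsymbol\gamma)(y) - \FS^k_t(\mathbf z_2,\boldsymbol\gamma)(y)
&= \sum_{x\in\mathcal{X}^k} z_1^k(x)\Big(\ProbK{y\mid x,\gamma^k(x),\phi(\mathbf z_1,\boldsymbol\gamma)} - \ProbK{y\mid x,\gamma^k(x),\phi(\mathbf z_2,\boldsymbol\gamma)}\Big) \nonumber\\
&\quad+ \sum_{x\in\mathcal{X}^k} \big(z_1^k(x)-z_2^k(x)\big)\,\ProbK{y\mid x,\gamma^k(x),\phi(\mathbf z_2,\boldsymbol\gamma)}.
\end{align}
I would bound the first sum by $|\mathcal{X}^k|\,\LipK{1}_t\,\Ninf{\mathbf z_1-\mathbf z_2}$, using the kernel Lipschitz bound of Assumption~\ref{assumption:Lipschitz-finite} together with the first step and $z_1^k(x)\in[0,1]$, and the second sum by $|\mathcal{X}^k|\,\Ninf{\mathbf z_1-\mathbf z_2}$, using that the kernel values lie in $[0,1]$. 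Taking the maximum over $y\in\mathcal{X}^k$ then gives the claim with $\Lip{3}_t := |\mathcal{X}^k|\,(\LipK{1}_t+1)$, a constant depending only on $|\mathcal{X}^k|$ and the model Lipschitz constant, hence independent of $|\mathcal{N}^k|$. (If attention is restricted to probability vectors, so that $\sum_{x}z^k(x)=1$, the factor $|\mathcal{X}^k|$ in the first sum may be replaced by $1$.)

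The only genuinely substantive point — and the step I would flag as the main obstacle — is handling, in the first sum above, the dependence of $\FS^k_t$ on $\mathbf z$ that enters \emph{through the last argument} of the transition kernel, i.e. through the empirical state--action distribution $\phi(\mathbf z,\boldsymbol\gamma)$; this is exactly what the non-expansiveness of $\phi$ from the first step is used for, and it is the reason Assumption~\ref{assumption:Lipschitz-finite} is the natural hypothesis here. Everything else reduces to the triangle inequality and boundedness of $\mathbf z$ and of the kernel, and I expect the same pattern to be reused in the subsequent lemmas that compare the $\bar f$- and $\bar f^{\mathcal{S}}$-dynamics.
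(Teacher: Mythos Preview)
Your proposal is correct and follows essentially the same approach as the paper: both arguments hinge on the observation that $\phi(\cdot,\boldsymbol\gamma)$ is Lipschitz (you sharpen this to non-expansive) and then use that compositions, sums, and products of Lipschitz functions are Lipschitz. The paper's proof is a terse high-level sketch invoking these closure properties, whereas you carry out the add-and-subtract decomposition explicitly and produce concrete constants $\Lip{4}_t=\Lip{2}_t$ and $|\mathcal{X}^k|(\LipK{1}_t+1)$; to match the paper's single $\Lip{3}_t$ used downstream, just take $\Lip{3}_t:=\max_{k\in\mathcal{K}}|\mathcal{X}^k|(\LipK{1}_t+1)$.
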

\begin{proof}
The proof is presented in Appendix~\ref{sec:proof_lemma:lipschitz-hat-f-hat-l}.
\end{proof}

\begin{Lemma}\cite[Lemma 2]{JalalCDC2017}\label{lemma:square-root-iid}
Consider a random vector $w_{1:n}$ consisting of $n \in \mathbb{N}$ i.i.d. random variables with probability function $P_W$. Then, 
$\Exp{\Ninf{\Emp{w_{1:n}} - P_W}} \leq \frac{1}{\sqrt n}.$
\end{Lemma}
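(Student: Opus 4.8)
The plan is to reduce the $\ell_\infty$ estimate to a coordinatewise second-moment computation and then exploit concavity of the square root. Write $\mathcal{W}=\{b_1,\ldots,b_{|\mathcal{W}|}\}$ for the finite alphabet on which $P_W$ is supported, and set $p_j := P_W(b_j)$ and $X_j := \Emp{w_{1:n}}(b_j) - p_j$ for $j \in \mathbb{N}_{|\mathcal{W}|}$. The first step I would carry out is to identify the distribution of each coordinate: since $w_1,\ldots,w_n$ are i.i.d.\ with law $P_W$, the count $n\,\Emp{w_{1:n}}(b_j) = \sum_{i=1}^n \ID{w_i = b_j}$ is a sum of $n$ i.i.d.\ Bernoulli$(p_j)$ variables, hence Binomial$(n,p_j)$. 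Consequently $\Exp{X_j}=0$ and $\Exp{X_j^2} = \mathrm{Var}\bigl(\Emp{w_{1:n}}(b_j)\bigr) = p_j(1-p_j)/n$.

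Next I would replace the maximum over coordinates by the Euclidean norm, $\max_j |X_j| \le \bigl(\sum_j X_j^2\bigr)^{1/2}$, apply Jensen's inequality to the concave map $t \mapsto \sqrt t$, and then substitute the variance computed above:
\begin{align}
\Exp{\Ninf{\Emp{w_{1:n}} - P_W}} &\le \Exp{\Bigl(\textstyle\sum_{j} X_j^2\Bigr)^{1/2}} \le \Bigl(\Exp{\textstyle\sum_{j} X_j^2}\Bigr)^{1/2} \nonumber\\
&= \Bigl(\tfrac{1}{n}\textstyle\sum_{j} p_j(1-p_j)\Bigr)^{1/2} \le \frac{1}{\sqrt n},
\end{align}
where the final inequality uses $\sum_j p_j(1-p_j) \le \sum_j p_j = 1$ (each factor $1-p_j$ is at most $1$). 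This establishes the claim.

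I do not expect any genuine obstacle here: the only ``idea'' is the $\ell_\infty$--$\ell_2$ comparison, after which the argument is Jensen's inequality together with the telescoping identity $\sum_j p_j(1-p_j) = 1 - \sum_j p_j^2$. It is worth recording what I would deliberately avoid, namely a union bound over the $|\mathcal{W}|$ coordinates combined with a Hoeffding- or Bernstein-type tail inequality, since that route introduces a spurious $\sqrt{\log|\mathcal{W}|}$ factor. In fact the computation above yields the slightly sharper bound $\sqrt{(1-\sum_j p_j^2)/n} \le \sqrt{(1-1/|\mathcal{W}|)/n}$, but the stated rate $1/\sqrt n$ is exactly what is needed for the convergence analysis of Problem~\ref{prob:PMFS-finite} in the sequel.
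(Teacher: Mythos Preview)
Your proof is correct. The paper does not supply its own proof of this lemma; it simply cites \cite[Lemma~2]{JalalCDC2017} and proceeds. Your argument via the $\ell_\infty$--$\ell_2$ comparison, Jensen's inequality for the concave map $t\mapsto\sqrt t$, and the bound $\sum_j p_j(1-p_j)\le\sum_j p_j=1$ is a complete and standard derivation of the stated rate, and nothing further is required.
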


\begin{Lemma}\label{lemma:lipschitz-bar-f-hat-f}
Let  Assumption~\ref{assumption:Lipschitz-finite} hold. For  any $\mathbf z_1, \mathbf z_2 \in \IX$, $\boldsymbol \gamma \in \mathcal{G}$ and   $k \in \mathcal{K}$ at time  $t \in \mathbb{N}_T$, we have
\begin{equation}
\Compress
\Exp{\Ninf{\bar f^k_t(\mathbf z_1,\boldsymbol \gamma, \MW) - \FS^k_t(\mathbf z_2,\boldsymbol \gamma)}} 
\leq \Lip{3}_t \Ninf{ \mathbf z_1 - \mathbf z_2} + \BigON.
\end{equation}
\end{Lemma}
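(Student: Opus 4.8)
The plan is to insert an intermediate term and apply the triangle inequality, splitting the quantity $\Ninf{\bar f^k_t(\mathbf z_1,\boldsymbol \gamma, \MW) - \FS^k_t(\mathbf z_2,\boldsymbol \gamma)}$ into $\Ninf{\bar f^k_t(\mathbf z_1,\boldsymbol \gamma, \MW) - \FS^k_t(\mathbf z_1,\boldsymbol \gamma)} + \Ninf{\FS^k_t(\mathbf z_1,\boldsymbol \gamma) - \FS^k_t(\mathbf z_2,\boldsymbol \gamma)}$. The second term is handled immediately: taking expectations and invoking the Lipschitz bound of Lemma~\ref{lemma:lipschitz-hat-f-hat-l} gives $\Lip{3}_t \Ninf{\mathbf z_1 - \mathbf z_2}$ (the argument $\mathbf z_1$ here is deterministic, so no expectation is needed). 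The real work is in bounding $\Exp{\Ninf{\bar f^k_t(\mathbf z_1,\boldsymbol \gamma, \MW) - \FS^k_t(\mathbf z_1,\boldsymbol \gamma)}}$ by $\BigON$.

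For that term I would compare the definitions \eqref{eq:def-bar-f-k} and \eqref{eq:hat-f-def} evaluated at the same $\mathbf z_1$. Both are sums over $x \in \mathcal{X}^k$ of $z_1^k(x)$ times a quantity depending on the noise: for $\bar f^k_t$ this quantity is $\sum_{w \in \mathcal{W}^k} \ID{f^k_t(x,\gamma^k(x),\phi(\mathbf z_1,\boldsymbol \gamma),w)=y}\,\MW(w)$, i.e., the empirical-noise average of the indicator, whereas for $\FS^k_t$ it is $\ProbK{y \mid x,\gamma^k(x),\phi(\mathbf z_1,\boldsymbol \gamma)}$, which by \eqref{eq:relation-models} equals the same indicator averaged against the true noise law $\mathbb{P}_{W^k_t}$. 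Hence the difference at each fixed $x,y$ is $\sum_{w} \ID{f^k_t(\cdots)=y}\,(\MW(w) - \mathbb{P}_{W^k_t}(w))$, which in absolute value is at most $\Ntv{\MW - \mathbb{P}_{W^k_t}} \le \Ninf{\MW - \mathbb{P}_{W^k_t}}$ up to a factor of $|\mathcal{W}^k|$ (or simply bounded by $\sum_w |\MW(w) - \mathbb{P}_{W^k_t}(w)|$). Summing over $x$ with weights $z_1^k(x) \le 1$ that sum to at most $|\mathcal{X}^k|$ — or, more carefully, using that $\sum_x z_1^k(x) \le \Ninf{\mathbf z_1}\cdot|\mathcal{X}^k|$ when $\mathbf z_1 \in \IX$, or $=1$ when $\mathbf z_1 \in \MeanSpace$ — yields a bound proportional to $\Ninf{\MW - \mathbb{P}_{W^k_t}}$. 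Then taking expectations and applying Lemma~\ref{lemma:square-root-iid} with the i.i.d. noise samples $(w^i_t)_{i \in \mathcal{N}^k}$ (valid under Assumption~\ref{assumption:iid}) bounds $\Exp{\Ninf{\MW - \mathbb{P}_{W^k_t}}}$ by $1/\sqrt{|\mathcal{N}^k|}$, giving the $\BigON$ term.

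The main obstacle is bookkeeping the constants correctly: one must be careful that the $\phi(\mathbf z_1,\boldsymbol \gamma)$ argument is identical in both $\bar f^k_t$ and $\FS^k_t$ so that no extra Lipschitz term in the transition kernel enters here (that error is already absorbed into the $\Lip{3}_t$ term via Lemma~\ref{lemma:lipschitz-hat-f-hat-l}), and that the finite dimensional constants $|\mathcal{X}^k|$, $|\mathcal{W}^k|$ multiplying $1/\sqrt{|\mathcal{N}^k|}$ are independent of $|\mathcal{N}^k|$ so they can be folded into the $\mathcal{O}(\cdot)$. I would close by combining the two bounds via the triangle inequality and linearity of expectation.
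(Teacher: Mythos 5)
Your proposal is correct and follows essentially the same route as the paper's proof: insert $\FS^k_t(\mathbf z_1,\boldsymbol \gamma)$ as the intermediate term, bound the Lipschitz part via Lemma~\ref{lemma:lipschitz-hat-f-hat-l}, and bound the empirical-versus-true noise discrepancy $\sum_{w}\ID{\cdot}(\underline{w}^k_t(w)-\mathbb{P}_{W^k_t}(w))$ via Lemma~\ref{lemma:square-root-iid}, absorbing the $|\mathcal{X}^k|$, $|\mathcal{W}^k|$ factors into the $\mathcal{O}(\cdot)$. Your explicit remark that the i.i.d. assumption on the noise is what licenses Lemma~\ref{lemma:square-root-iid} is a point the paper's lemma statement leaves implicit.
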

\begin{proof}
The proof is presented in Appendix~\ref{sec:proof_lemma:lipschitz-bar-f-hat-f}. 
\end{proof}

\begin{Lemma}\label{lemma:lipschitz-bar-f-bar-f}
Let  Assumption~\ref{assumption:Lipschitz-finite} hold.   For  any $\mathbf z_1, \mathbf z_2 \in \IX$,  $\boldsymbol \gamma \in \mathcal{G}$ and $k \in \mathcal{K}$ at time  $t \in \mathbb{N}_T$, we have
\begin{equation}
\Exp{\Ninf{\bar f^k_t(\mathbf z_1,\boldsymbol \gamma, \MW) - \bar f^k_t(\mathbf z_2,\boldsymbol \gamma, \MW)}}
 \leq \Lip{3}_t \Ninf{ \mathbf z_1 - \mathbf z_2}.
\end{equation}
\end{Lemma}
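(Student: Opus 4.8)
The plan is to prove the estimate one coordinate at a time in $y\in\mathcal{X}^k$ and then take the maximum. Fix $k\in\mathcal{K}$, $t\in\mathbb{N}_T$ and $\boldsymbol \gamma\in\mathcal{G}$, and for a point $\Bz$ set
\[
\rho^k_t(y\mid x,\Bz):=\sum_{w\in\mathcal{W}^k}\ID{f^k_t(x,\gamma^k(x),\Bz,w)=y}\,\MW(w),
\]
so that $\bar f^k_t(\mathbf z,\boldsymbol \gamma,\MW)(y)=\sum_{x\in\mathcal{X}^k}z^k(x)\,\rho^k_t\bigl(y\mid x,\phi(\mathbf z,\boldsymbol \gamma)\bigr)$; in other words $\rho^k_t(y\mid x,\boldsymbol \cdot)$ is just the transition kernel $\ProbK{y\mid x,\gamma^k(x),\boldsymbol \cdot}$ of~\eqref{eq:relation-models} with the true noise law replaced by the \emph{empirical} law $\MW$. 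Subtracting the two instances of $\bar f^k_t$ and inserting $\pm\sum_x z^k_2(x)\rho^k_t(y\mid x,\phi(\mathbf z_1,\boldsymbol \gamma))$ splits the difference into a \textbf{weight} term $\sum_x\bigl(z^k_1(x)-z^k_2(x)\bigr)\rho^k_t(y\mid x,\phi(\mathbf z_1,\boldsymbol \gamma))$ and a \textbf{kernel} term $\sum_x z^k_2(x)\bigl(\rho^k_t(y\mid x,\phi(\mathbf z_1,\boldsymbol \gamma))-\rho^k_t(y\mid x,\phi(\mathbf z_2,\boldsymbol \gamma))\bigr)$. The decisive structural observation is that the \emph{same} realization $\MW$ sits inside both copies of $\bar f^k_t$, so in the kernel term the two kernels differ only through their dependence on $\mathbf z$ and not through any sampling fluctuation of the noise; this is exactly why the $\BigON$ correction present in Lemma~\ref{lemma:lipschitz-bar-f-hat-f} — where $\bar f^k_t$ is instead compared against $\FS^k_t$, i.e.\ $\MW$ against $\mathbb{P}_{W^k_t}$ — does not reappear here.

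The weight term is disposed of pathwise: since $0\le\rho^k_t\le 1$ and $|z^k_1(x)-z^k_2(x)|\le\Ninf{\mathbf z_1-\mathbf z_2}$, it is bounded by $|\mathcal{X}^k|\,\Ninf{\mathbf z_1-\mathbf z_2}$ uniformly in $y$, and needs no expectation. For the kernel term I would bound $\bigl|\rho^k_t(y\mid x,\phi(\mathbf z_1,\boldsymbol \gamma))-\rho^k_t(y\mid x,\phi(\mathbf z_2,\boldsymbol \gamma))\bigr|$ by $\MW(D^k_x)$, where $D^k_x$ denotes the set of noises $w$ on which $f^k_t(x,\gamma^k(x),\phi(\mathbf z_1,\boldsymbol \gamma),w)\neq f^k_t(x,\gamma^k(x),\phi(\mathbf z_2,\boldsymbol \gamma),w)$; since this bound does not involve $y$, it also bounds $\max_y$. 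Now invoke Assumption~\ref{assumption:iid}: the $(w^i_t)_{i\in\mathcal{N}^k}$ are i.i.d.\ with marginal $\mathbb{P}_{W^k_t}$, so $\Exp{\MW(D^k_x)}=\mathbb{P}_{W^k_t}(D^k_x)$, and the expected kernel contribution is at most $\sum_x z^k_2(x)\,\mathbb{P}_{W^k_t}(D^k_x)$. Collecting both terms, taking $\max_y$ then $\Exp{\boldsymbol \cdot}$, and absorbing $|\mathcal{X}^k|$ together with the Lipschitz constant $\LipK{1}_t$ of Assumption~\ref{assumption:Lipschitz-finite} into a single constant yields the claim with $\Lip{3}_t$ matching the constant of Lemma~\ref{lemma:lipschitz-hat-f-hat-l}.

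The substantive step — and the main obstacle — is the estimate that $\mathbb{P}_{W^k_t}(D^k_x)$ is bounded by a multiple of $\Ninf{\mathbf z_1-\mathbf z_2}$ independent of $|\mathcal{N}^k|$; this is a Lipschitz-in-probability property of the dynamics in its mean-field argument, and it is the only place where Assumption~\ref{assumption:Lipschitz-finite} must be used at full strength. Two points need care. First, the estimate has to be made \emph{after} integrating: for a fixed sample $\MW$ the map $\mathbf z\mapsto\rho^k_t(y\mid x,\phi(\mathbf z,\boldsymbol \gamma))$ need not be continuous, so one must not replace $\Exp{\max_y|\boldsymbol \cdot|}$ by $\max_y\Exp{|\boldsymbol \cdot|}$ — the ordering used above, namely collapsing $\max_y$ into the $y$-free quantity $\sum_x z^k_2(x)\MW(D^k_x)$ and only then taking the expectation, is what makes this legitimate. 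Second, the bound on $\mathbb{P}_{W^k_t}(D^k_x)$ follows cleanly once the dynamics is realized through the standard coupling of the noise underlying the $f$-to-kernel correspondence~\eqref{eq:relation-models}: a perturbation of size $\Ninf{\mathbf z_1-\mathbf z_2}$ of $\phi(\mathbf z,\boldsymbol \gamma)$ then moves the cumulative transition probabilities $\ProbK{\boldsymbol \cdot\mid x,\gamma^k(x),\phi(\mathbf z,\boldsymbol \gamma)}$ by a quantity controlled by $\LipK{1}_t\Ninf{\mathbf z_1-\mathbf z_2}$ (Assumption~\ref{assumption:Lipschitz-finite}), hence alters $w\mapsto f^k_t(x,\gamma^k(x),\phi(\mathbf z,\boldsymbol \gamma),w)$ only on a set of proportional $\mathbb{P}_{W^k_t}$-mass. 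An alternative organization proceeds by the triangle inequality through $\FS^k_t(\mathbf z_j,\boldsymbol \gamma)$, applying Lemma~\ref{lemma:lipschitz-bar-f-hat-f} and Lemma~\ref{lemma:lipschitz-hat-f-hat-l}, after first combining the two residuals $\bar f^k_t(\mathbf z_j,\boldsymbol \gamma,\MW)-\FS^k_t(\mathbf z_j,\boldsymbol \gamma)$ — which carry the common factor $\MW-\mathbb{P}_{W^k_t}$ — into one expression.
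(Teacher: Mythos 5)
Your decomposition into a \emph{weight} term and a \emph{kernel} term is a genuinely different route from the paper's: the paper never isolates the disagreement set $D^k_x$, but instead sandwiches $\bar f^k_t(\mathbf z,\boldsymbol \gamma,\MW)(y)$ multiplicatively between $\min_{w}\tfrac{\MW(w)}{\PW}\FS^k_t(\mathbf z,\boldsymbol \gamma)(y)$ and $\max_{w}\tfrac{\MW(w)}{\PW}\FS^k_t(\mathbf z,\boldsymbol \gamma)(y)$, and then works entirely with $\FS^k_t$, to which Lemma~\ref{lemma:lipschitz-hat-f-hat-l} applies directly. Your weight-term estimate, the identity $\Exp{\MW(D^k_x)}=\mathbb{P}_{W^k_t}(D^k_x)$, and your care about the order of $\max_y$ and $\Exp{\boldsymbol \cdot}$ are all fine.

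The gap is exactly where you place it, and it cannot be closed from the stated hypotheses. Assumption~\ref{assumption:Lipschitz-finite} constrains only the induced kernels $\ProbK{y\mid x,u,\Bz}$; it says nothing about the representation $f^k_t$, and $\bar f^k_t$ --- unlike $\FS^k_t$ --- is sensitive to that representation because it pairs the indicator $\ID{f^k_t(x,\gamma^k(x),\phi(\mathbf z,\boldsymbol \gamma),w)=y}$ with the \emph{empirical} measure $\MW$ rather than with $\mathbb{P}_{W^k_t}$. You are therefore not free to ``realize the dynamics through the standard coupling'': $f^k_t$ is model data, and replacing it by an inverse-CDF representative with the same kernels changes the very quantity being estimated. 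Concretely, take $\mathcal{X}^k=\{a,b\}$, $\mathcal{W}^k=\{0,1\}$ with $\mathbb{P}_{W^k_t}(0)=\mathbb{P}_{W^k_t}(1)=\tfrac12$, a singleton $\mathcal{U}^k$, and let $f^k_t$ send $w=0\mapsto a$, $w=1\mapsto b$ when $\phi(\mathbf z,\boldsymbol \gamma)(a,u)\geq\tfrac12$ and swap the two images otherwise. The induced kernel is identically $(\tfrac12,\tfrac12)$, so Assumption~\ref{assumption:Lipschitz-finite} holds with $\LipK{1}_t=0$, yet for $\mathbf z_1,\mathbf z_2$ straddling the threshold one gets $D^k_x=\mathcal{W}^k$ and
\begin{equation}
\Exp{\Ninf{\bar f^k_t(\mathbf z_1,\boldsymbol \gamma,\MW)-\bar f^k_t(\mathbf z_2,\boldsymbol \gamma,\MW)}}=\Exp{|\MW(0)-\MW(1)|}+\mathcal{O}(\Ninf{\mathbf z_1-\mathbf z_2}),
\end{equation}
which is of order $1/\sqrt{|\mathcal{N}^k|}$ no matter how small $\Ninf{\mathbf z_1-\mathbf z_2}$ is. Hence the bound $\mathbb{P}_{W^k_t}(D^k_x)\leq C\Ninf{\mathbf z_1-\mathbf z_2}$ is an additional regularity hypothesis on $f^k_t$ itself, not a consequence of the kernel-level Lipschitz condition. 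Under the assumptions as written, your argument (and likewise your fallback of applying Lemma~\ref{lemma:lipschitz-bar-f-hat-f} twice --- the two residuals do not cancel, since the indicators multiplying $\MW(w)-\PW$ differ precisely on $D^k_x$) only delivers the weaker estimate $\Lip{3}_t\Ninf{\mathbf z_1-\mathbf z_2}+\BigON$, not the inequality stated in the lemma; that weaker form would still make the downstream error bounds vanish as all sub-populations grow, but it degrades the rate in Lemma~\ref{lemma:lipschitz-MF-Z} from $\min_{k\in\mathcal{S}^c}|\mathcal{N}^k|$ to $\min_{k\in\mathcal{K}}|\mathcal{N}^k|$.
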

\begin{proof}
The proof is presented in Appendix~\ref{sec:proof_lemma:lipschitz-bar-f-bar-f}.
\end{proof}

\begin{Lemma}\label{proposition:quantization-basic}
Let Assumption~\ref{assumption:Lipschitz-finite} hold. Given any function $f^j_t: \IX \times \mathcal{G} \times \underline{\mathcal{W}} \rightarrow \IX$,  $j \in \{1,2\},$   $\NT$,  define real-valued functions $\{V^j_{1}, \ldots, V^j_{T+1}\}$  such that  for any $\mathbf z^j \in \IX$,
$
V^j_{T+1}(\mathbf{z}^j)=0,
$
and for any $\NT$ and $\mathbf z^j \in \IX$, one has
\begin{equation}\label{eq:prop-D-t}
V^j_t(\mathbf{z}^j)=\min_{\boldsymbol \gamma^j \in \mathcal{G}} \big(\LS_t\left(\mathbf z^j,\boldsymbol \gamma^j\right) +\Exp{ V^j_{t+1}(f^j_t(\mathbf z^j, \boldsymbol \gamma^j,\underline{\mathbf w}_t)  )}\big).
\end{equation}
Suppose there exist scalars $\Lip{z}_t \in \mathbb{R}_{>0}$ and $\Delta \in \mathbb{R}_{\geq 0}$ such that for every $\mathbf z^1, \mathbf z^2 \in \IX$ and $\boldsymbol \gamma \in \mathcal{G}$, the following inequality holds:
\begin{equation}\label{eq:prop-z1-z2}
\Exp{\Ninf{ f^1_t(\mathbf z^1, \boldsymbol \gamma,\underline{\mathbf w}_t) - f^2_t(\mathbf z^2, \boldsymbol \gamma,\underline{\mathbf w}_t)}} \leq \Lip{z}_t \Ninf{ \mathbf z^1 - \mathbf z^2} + \Delta.
\end{equation}
 Then, at every time $\NT$, we have
\begin{equation}\label{eq:prop-D-difference}
| V^1_t (\mathbf z^1) -V^2_t (\mathbf z^2)| \leq \Lip{5}_t\Ninf{ \mathbf z^1 - \mathbf z^2} +\Lip{6}_t \Delta,
\end{equation}
where $\Lip{5}_{T+1}:=\Lip{6}_{T+1}:=0$, and for any  $\NT$,
 \begin{equation}\label{eq:H5H6}
 \Lip{5}_t:=\Lip{4}_t + \Lip{5}_{t+1}\Lip{z}_t, \quad  \text{ and } \quad  \Lip{6}_t:= \Lip{5}_{t+1}+\Lip{6}_{t+1}.
 \end{equation}
\end{Lemma}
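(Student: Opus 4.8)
The plan is to prove \eqref{eq:prop-D-difference} by backward induction on $t$, starting from $t = T+1$ where the claim holds trivially since $V^1_{T+1} \equiv V^2_{T+1} \equiv 0$ and $\Lip{5}_{T+1} = \Lip{6}_{T+1} = 0$. For the inductive step, I would assume \eqref{eq:prop-D-difference} holds at time $t+1$ and establish it at time $t$. Fix $\mathbf z^1, \mathbf z^2 \in \IX$. The standard trick for bounding the difference of two minima is: if $\boldsymbol \gamma^{1,\ast}$ and $\boldsymbol \gamma^{2,\ast}$ are minimizers in \eqref{eq:prop-D-t} for $V^1_t(\mathbf z^1)$ and $V^2_t(\mathbf z^2)$ respectively, then $V^1_t(\mathbf z^1) - V^2_t(\mathbf z^2)$ is upper-bounded by evaluating the $V^1$ objective at $\boldsymbol \gamma^{2,\ast}$ minus the $V^2$ objective at $\boldsymbol \gamma^{2,\ast}$, and lower-bounded symmetrically; hence $|V^1_t(\mathbf z^1) - V^2_t(\mathbf z^2)| \leq \max_{\boldsymbol \gamma \in \mathcal{G}} \big| \big(\LS_t(\mathbf z^1, \boldsymbol \gamma) + \Exp{V^1_{t+1}(f^1_t(\mathbf z^1, \boldsymbol \gamma, \underline{\mathbf w}_t))}\big) - \big(\LS_t(\mathbf z^2, \boldsymbol \gamma) + \Exp{V^2_{t+1}(f^2_t(\mathbf z^2, \boldsymbol \gamma, \underline{\mathbf w}_t))}\big)\big|$ where $\boldsymbol \gamma$ ranges over $\mathcal{G}$.

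Next I would apply the triangle inequality to split this into a cost term and a value term. For the cost term, Lemma~\ref{lemma:lipschitz-hat-f-hat-l} gives $|\LS_t(\mathbf z^1, \boldsymbol \gamma) - \LS_t(\mathbf z^2, \boldsymbol \gamma)| \leq \Lip{4}_t \Ninf{\mathbf z^1 - \mathbf z^2}$, uniformly in $\boldsymbol \gamma$. For the value term, $\big| \Exp{V^1_{t+1}(f^1_t(\mathbf z^1, \boldsymbol \gamma, \underline{\mathbf w}_t))} - \Exp{V^2_{t+1}(f^2_t(\mathbf z^2, \boldsymbol \gamma, \underline{\mathbf w}_t))} \big| \leq \Exp{\big| V^1_{t+1}(f^1_t(\mathbf z^1, \boldsymbol \gamma, \underline{\mathbf w}_t)) - V^2_{t+1}(f^2_t(\mathbf z^2, \boldsymbol \gamma, \underline{\mathbf w}_t)) \big|}$ by Jensen/monotonicity of expectation, and then the induction hypothesis applied pointwise (with $\mathbf z^1$ replaced by $f^1_t(\mathbf z^1, \boldsymbol \gamma, \underline{\mathbf w}_t)$ and $\mathbf z^2$ by $f^2_t(\mathbf z^2, \boldsymbol \gamma, \underline{\mathbf w}_t)$) bounds the integrand by $\Lip{5}_{t+1}\Ninf{f^1_t(\mathbf z^1, \boldsymbol \gamma, \underline{\mathbf w}_t) - f^2_t(\mathbf z^2, \boldsymbol \gamma, \underline{\mathbf w}_t)} + \Lip{6}_{t+1}\Delta$. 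Taking expectations and invoking the hypothesis \eqref{eq:prop-z1-z2} converts $\Exp{\Ninf{f^1_t(\mathbf z^1, \boldsymbol \gamma, \underline{\mathbf w}_t) - f^2_t(\mathbf z^2, \boldsymbol \gamma, \underline{\mathbf w}_t)}}$ into $\Lip{z}_t \Ninf{\mathbf z^1 - \mathbf z^2} + \Delta$. Collecting terms yields $|V^1_t(\mathbf z^1) - V^2_t(\mathbf z^2)| \leq (\Lip{4}_t + \Lip{5}_{t+1}\Lip{z}_t)\Ninf{\mathbf z^1 - \mathbf z^2} + (\Lip{5}_{t+1} + \Lip{6}_{t+1})\Delta$, which matches \eqref{eq:prop-D-difference} with the recursions \eqref{eq:H5H6}, completing the induction.

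The only delicate point — not really an obstacle but worth stating carefully — is the interchange in the value term: I want to push the induction hypothesis, which is a statement about $V^j_{t+1}$ at deterministic arguments, inside the expectation over $\underline{\mathbf w}_t$. This is legitimate because for each realization of $\underline{\mathbf w}_t$ the points $f^1_t(\mathbf z^1, \boldsymbol \gamma, \underline{\mathbf w}_t)$ and $f^2_t(\mathbf z^2, \boldsymbol \gamma, \underline{\mathbf w}_t)$ lie in $\IX$, so the hypothesis applies realization-by-realization; then one takes expectations. I should also note that the $\max$ over $\mathcal{G}$ is harmless since $\mathcal{G}$ is finite and the resulting bounds $\Lip{4}_t, \Lip{5}_{t+1}, \Lip{6}_{t+1}, \Lip{z}_t$ are all independent of $\boldsymbol \gamma$, so the supremum is achieved and the bound is uniform. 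No continuity or measurability subtleties arise beyond what is already implicit in the finite-alphabet setting.
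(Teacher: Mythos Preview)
Your proposal is correct and follows essentially the same backward-induction argument as the paper: split the difference at time $t$ into a cost term handled by Lemma~\ref{lemma:lipschitz-hat-f-hat-l} and a value term handled by the induction hypothesis together with \eqref{eq:prop-z1-z2}, then collect coefficients to recover \eqref{eq:H5H6}. The only cosmetic differences are that the paper takes $t=T$ as the base case and bounds one direction explicitly (the other following by symmetry), whereas you start at $t=T+1$ and package both directions via the $\max_{\boldsymbol\gamma\in\mathcal G}$ bound on the difference of minima.
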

\begin{proof}
The proof is presented in Appendix~\ref{sec:proof_proposition:quantization-basic}.
\end{proof}

\begin{Lemma}\label{proposition:quantization-basic-equality}
Consider Lemma~\ref{proposition:quantization-basic} without  the $\min$ operator in equation~\eqref{eq:prop-D-t}.  Then, inequality~\eqref{eq:prop-D-difference}  holds for any $\boldsymbol \gamma:=\boldsymbol \gamma^1=\boldsymbol \gamma^2$.
\end{Lemma}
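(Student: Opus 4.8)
The plan is to prove the bound by backward induction on $t$, following the argument for Lemma~\ref{proposition:quantization-basic} verbatim but deleting the one step in which the two minimizations are compared. Since the $\min$ operator is absent, the recursion now reads $V^j_t(\mathbf z^j)=\LS_t(\mathbf z^j,\boldsymbol\gamma)+\Exp{V^j_{t+1}(f^j_t(\mathbf z^j,\boldsymbol\gamma,\underline{\mathbf w}_t))}$ for $j\in\{1,2\}$ with the \emph{same} law $\boldsymbol\gamma$ in both, so there is nothing to optimize and no need to substitute a minimizer of one recursion into the other. For the base case $t=T+1$ both terminal functions vanish, hence $|V^1_{T+1}(\mathbf z^1)-V^2_{T+1}(\mathbf z^2)|=0$, consistent with $\Lip{5}_{T+1}=\Lip{6}_{T+1}=0$.

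For the inductive step I would assume \eqref{eq:prop-D-difference} at time $t+1$ holds for \emph{all} pairs of arguments in $\IX$, subtract the two recursions at time $t$, and apply the triangle inequality to split $|V^1_t(\mathbf z^1)-V^2_t(\mathbf z^2)|$ into the cost term $|\LS_t(\mathbf z^1,\boldsymbol\gamma)-\LS_t(\mathbf z^2,\boldsymbol\gamma)|$ and the expectation term $\Exp{|V^1_{t+1}(f^1_t(\mathbf z^1,\boldsymbol\gamma,\underline{\mathbf w}_t))-V^2_{t+1}(f^2_t(\mathbf z^2,\boldsymbol\gamma,\underline{\mathbf w}_t))|}$. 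The first is bounded by $\Lip{4}_t\Ninf{\mathbf z^1-\mathbf z^2}$ using the Lipschitz property of $\LS_t$ from Lemma~\ref{lemma:lipschitz-hat-f-hat-l}. Inside the expectation I invoke the induction hypothesis pointwise (legitimate precisely because it holds uniformly in the arguments, so it survives the averaging over $\underline{\mathbf w}_t$) to get $\Lip{5}_{t+1}\Exp{\Ninf{f^1_t(\mathbf z^1,\boldsymbol\gamma,\underline{\mathbf w}_t)-f^2_t(\mathbf z^2,\boldsymbol\gamma,\underline{\mathbf w}_t)}}+\Lip{6}_{t+1}\Delta$, and then hypothesis \eqref{eq:prop-z1-z2} bounds the remaining expectation by $\Lip{z}_t\Ninf{\mathbf z^1-\mathbf z^2}+\Delta$. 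Collecting terms gives the coefficient $\Lip{4}_t+\Lip{5}_{t+1}\Lip{z}_t$ on $\Ninf{\mathbf z^1-\mathbf z^2}$ and $\Lip{5}_{t+1}+\Lip{6}_{t+1}$ on $\Delta$, which are exactly $\Lip{5}_t$ and $\Lip{6}_t$ from \eqref{eq:H5H6}, closing the induction.

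I do not expect any genuine obstacle: this is a strictly easier version of Lemma~\ref{proposition:quantization-basic}, because removing the $\min$ eliminates the only delicate point there, namely bounding a difference of minima over $\mathcal{G}$ by the supremum over $\mathcal{G}$ of the difference (which relied on $\mathcal{G}$ being a common feasible set). The only item worth stating explicitly is the uniform-in-arguments form of the induction hypothesis, needed to push it through the expectation over $\underline{\mathbf w}_t$; since \eqref{eq:prop-D-difference} at time $t+1$ is asserted for every $\mathbf z^1,\mathbf z^2\in\IX$, this is automatic. The same recursion \eqref{eq:H5H6} and hence the same constants $\Lip{5}_t,\Lip{6}_t$ as in Lemma~\ref{proposition:quantization-basic} serve here.
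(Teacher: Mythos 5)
Your proof is correct and is essentially the paper's own argument: the paper simply states that the proof follows the backward induction of Lemma~\ref{proposition:quantization-basic} with the minimization operator removed, which is exactly the induction you carry out, with the same use of Lemma~\ref{lemma:lipschitz-hat-f-hat-l}, hypothesis~\eqref{eq:prop-z1-z2}, and the recursion~\eqref{eq:H5H6}. Your explicit remark that the induction hypothesis must hold uniformly in the arguments so it can be applied inside the expectation over $\underline{\mathbf w}_t$ is a correct and worthwhile clarification, but not a departure from the paper's route.
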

\begin{proof}
The proof follows along the same lines as that of Lemma~\ref{proposition:quantization-basic} with no minimization operator.
\end{proof}

\begin{Lemma}\label{lemma:lipschitz-MF-Z}
Under Assumption~\ref{assumption:iid},  for any  $\mathbf d \in \MeanSpace$, $\mathbf{p} \in \DeltaS$ and  $\boldsymbol \gamma \in \mathcal{G}$  at time $\NT$, we have
\begin{equation}
\Exp{\Ninf{ \bar f_t(\mathbf d,\boldsymbol \gamma,\underline{\mathbf w}_t) - \bar f^{\mathcal{S}}_t (\mathbf p,\boldsymbol \gamma,\underline{\mathbf w}_t)} }  \leq  \Lip{3}_t \Ninf{\mathbf d- \mathbf p}+ \BigO,
\end{equation}
where $n=\min_{ k \in \mathcal{S}^c} |\mathcal{N}^k|$ in Problem~\ref{prob:PMFS-finite}.
\end{Lemma}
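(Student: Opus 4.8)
The plan is to split the infinity norm of the stacked difference block-by-block over the $K$ sub-populations, and to treat the blocks indexed by $\mathcal{S}$ and by $\mathcal{S}^c$ separately, invoking the two Lipschitz lemmas already established. Recall that $\bar f_t$ in~\eqref{eq:bar-f-augmented-def} stacks the blocks $\bar f^k_t(\cdot,\cdot,\MW)$, $k\in\mathcal{K}$, whereas $\bar f^{\mathcal{S}}_t$ in~\eqref{eq:aux-dynamics-1}--\eqref{eq:def-tilde-f-pmfs} stacks $\bar f^k_t(\cdot,\cdot,\MW)$ for $k \in \mathcal{S}$ and the mean-field surrogate $\FS^k_t(\cdot,\cdot)$ for $k \in \mathcal{S}^c$. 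Hence, for every realization of $\underline{\mathbf w}_t$,
\begin{equation}
\Ninf{\bar f_t(\mathbf d,\boldsymbol \gamma,\underline{\mathbf w}_t) - \bar f^{\mathcal{S}}_t(\mathbf p,\boldsymbol \gamma,\underline{\mathbf w}_t)} = \max_{k \in \mathcal{K}}\Ninf{\Delta^k_t},
\end{equation}
where $\Delta^k_t := \bar f^k_t(\mathbf d,\boldsymbol \gamma,\MW) - \bar f^k_t(\mathbf p,\boldsymbol \gamma,\MW)$ for $k \in \mathcal{S}$ and $\Delta^k_t := \bar f^k_t(\mathbf d,\boldsymbol \gamma,\MW) - \FS^k_t(\mathbf p,\boldsymbol \gamma)$ for $k \in \mathcal{S}^c$. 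Since the maximum of finitely many nonnegative quantities is bounded by their sum, I would then write $\Exp{\Ninf{\bar f_t(\mathbf d,\boldsymbol \gamma,\underline{\mathbf w}_t) - \bar f^{\mathcal{S}}_t(\mathbf p,\boldsymbol \gamma,\underline{\mathbf w}_t)}} \le \sum_{k \in \mathcal{K}}\Exp{\Ninf{\Delta^k_t}}$ and estimate each term.

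For $k \in \mathcal{S}$ both blocks are evaluated with the same empirical noise $\MW$, so Lemma~\ref{lemma:lipschitz-bar-f-bar-f} applies with $\mathbf z_1 = \mathbf d$, $\mathbf z_2 = \mathbf p$ and gives $\Exp{\Ninf{\Delta^k_t}} \le \Lip{3}_t\Ninf{\mathbf d - \mathbf p}$, with no $1/\sqrt{\cdot}$ term; here one may use Assumption~\ref{assumption:decoupled} to note the bound in fact only involves the $\mathcal{S}$-blocks of $\mathbf d - \mathbf p$, though this is not needed for the stated estimate. For $k \in \mathcal{S}^c$, Lemma~\ref{lemma:lipschitz-bar-f-hat-f} applies with $\mathbf z_1 = \mathbf d$, $\mathbf z_2 = \mathbf p$ and gives $\Exp{\Ninf{\Delta^k_t}} \le \Lip{3}_t\Ninf{\mathbf d - \mathbf p} + \BigON$; since $k \in \mathcal{S}^c$ and $n = \min_{k \in \mathcal{S}^c}|\mathcal{N}^k|$, we have $|\mathcal{N}^k| \ge n$, hence $\BigON \le \BigO$. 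Summing over $k \in \mathcal{K}$ and absorbing the fixed, population-size-independent factor $|\mathcal{K}|$ into the constants ($|\mathcal{K}|\Lip{3}_t$ is again a valid Lipschitz constant and $|\mathcal{K}|\cdot\BigO = \BigO$) yields the claimed inequality.

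The argument is essentially bookkeeping once Lemmas~\ref{lemma:lipschitz-bar-f-hat-f} and~\ref{lemma:lipschitz-bar-f-bar-f} are in hand; the only point requiring a little care is the block decomposition of the augmented maps and keeping track of which blocks carry the empirical-noise approximation error — namely those in $\mathcal{S}^c$, where $\bar f^k_t$ has been replaced by the mean-field surrogate $\FS^k_t$ — versus those in $\mathcal{S}$, which do not. There is no genuine obstacle: the substantive estimates (the $1/\sqrt{n}$ concentration bound of Lemma~\ref{lemma:square-root-iid} and the Lipschitz bounds of Assumption~\ref{assumption:Lipschitz-finite}) are already encapsulated in the cited lemmas.
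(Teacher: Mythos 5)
Your proposal is correct and is essentially the paper's own argument: the paper's proof is a one-line citation of equations~\eqref{eq:bar-f-augmented-def} and~\eqref{eq:aux-dynamics-1} together with Lemmas~\ref{lemma:lipschitz-bar-f-hat-f} and~\ref{lemma:lipschitz-bar-f-bar-f}, and you have simply made the block decomposition over $\mathcal{S}$ versus $\mathcal{S}^c$ explicit. Your handling of the max-versus-sum step (which costs a harmless factor $|\mathcal{K}|$, absorbed into the constants) is actually more careful than what the paper writes down.
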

\begin{proof}
The proof follows  from equations \eqref{eq:bar-f-augmented-def} and \eqref{eq:aux-dynamics-1},  and Lemmas \ref{lemma:lipschitz-bar-f-hat-f} and \ref{lemma:lipschitz-bar-f-bar-f}.
\end{proof}

\subsection{Convergence result for Problem~\ref{prob:PMFS-finite}} 
Let $\mathbf g^\ast=\{(g^{\ast,k}_t)_{k \in \mathcal{K}}\}_{t=1}^T$  and  $\mathbf g^p=\{ (g^{p,k}_t)_{k \in \mathcal{K}}\}_{t=1}^T$ denote  the  strategies  proposed  in~\eqref{eq:mfs-optimal-finite-strategy} and~\eqref{eq:strategy-pmfs-sub-optimal}, respectively. Inspired by the  notion of the  \emph{price of information} in~\cite{bacsar2011prices}, we  define the price of information in a slightly different manner as follows.
\begin{Definition}[\textbf{Price of information}]
 The price of information (PoI) is defined as the loss of performance due to  using  the PDSS strategy $\mathbf g^p$ rather than the  DSS strategy $\mathbf g^\ast$, i.e.,
 \begin{equation}\label{eq:price_info}
 PoI:=| J_N(\mathbf{g}^p) - J_N(\mathbf g^\ast) |.
 \end{equation}
\end{Definition}

\begin{Theorem}\label{thm:pmfs-finite}
Let Assumptions~\ref{assumption:iid}, \ref{assumption:iid-x},  \ref{assumption:decoupled}, and \ref{assumption:Lipschitz-finite} hold.   The price of information~\eqref{eq:price_info} converges to zero at the rate $1/\sqrt n$, i.e., 
\begin{equation}\label{eq:epsilon-rate-pmfs}
 PoI  \leq  (\Lip{5}_1+\Lip{6}_1)\BigO,
\end{equation}
where $\BigO$  is independent of the control horizon~$T$, and the positive constants $\Lip{5}_1$ and $\Lip{6}_1$ are computed recursively   for $\NT$ from the Lipschitz constants of Lemma~\ref{lemma:lipschitz-hat-f-hat-l} as follows:
 $\Lip{5}_t:=\Lip{4}_t + \Lip{5}_{t+1}  \Lip{3}_t$ and $\Lip{6}_t:= \Lip{5}_{t+1}+\Lip{6}_{t+1}$,
where  $\Lip{5}_{T+1}:=\Lip{6}_{T+1}:=0$.  Hence, strategy~\eqref{eq:strategy-pmfs-sub-optimal}  is an $\varepsilon(n)$-optimal strategy for Problem~\ref{prob:PMFS-finite}.
\end{Theorem}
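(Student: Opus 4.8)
The plan is to bound the price of information by sandwiching $J_N(\mathbf g^p)$ between the optimal value $\VM_1$ of the exact deep-state dynamic program~\eqref{eq:thm-dp-mfs-finie-t} and the value $\VP_1$ of the mixed-state dynamic program~\eqref{eq:thm-dp-pmfs-finie-t}, and to control the two resulting gaps separately using the Lipschitz estimates of Subsection~\ref{sec:lemmas}. First I would record three cost identities. By Theorem~\ref{thm:mfs-finite}, $J_N(\mathbf g^\ast)=\Exp{\VM_1(\mathbf d_1)}$. Iterating~\eqref{eq:thm-dp-pmfs-finie-t} along the minimizing local laws $\boldsymbol \gamma^p_t:=\boldsymbol\psi^p_t(\mathbf p_t)$ and using the mixed-state dynamics~\eqref{eq:def-tilde-f-pmfs} telescopes to $\Exp{\VP_1(\mathbf p_1)}=\Exp{\sum_{t=1}^T \LS_t(\mathbf p_t,\boldsymbol \gamma^p_t)}$; and because the realized per-step cost under $\mathbf g^p$ equals $\LS_t(\mathbf d^p_t,\boldsymbol\gamma^p_t)$ by~\eqref{eq:hat-l-def} and~\eqref{eq:proof-joint-mean-field-1}, one has $J_N(\mathbf g^p)=\Exp{\sum_{t=1}^T \LS_t(\mathbf d^p_t,\boldsymbol\gamma^p_t)}$. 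Hence $PoI\le \big|J_N(\mathbf g^p)-\Exp{\VP_1(\mathbf p_1)}\big|+\big|\Exp{\VP_1(\mathbf p_1)}-\Exp{\VM_1(\mathbf d_1)}\big|$, and it remains to bound each term by $\BigO$.

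For the second (value-comparison) term I would invoke Lemma~\ref{proposition:quantization-basic} with $f^1_t:=\bar f_t$, $f^2_t:=\bar f^{\mathcal S}_t$ (viewed on $\IX$), $V^1:=\VM$, $V^2:=\VP$: here both dynamic programs carry the \emph{same} running cost $\LS_t$ and zero terminal cost, and Lemma~\ref{lemma:lipschitz-MF-Z} supplies exactly the one-step hypothesis~\eqref{eq:prop-z1-z2} with $\Lip{z}_t=\Lip{3}_t$ and $\Delta=\BigO$, so~\eqref{eq:prop-D-difference} yields $|\VM_1(\mathbf d_1)-\VP_1(\mathbf p_1)|\le \Lip{5}_1\Ninf{\mathbf d_1-\mathbf p_1}+\Lip{6}_1\,\BigO$ with $\Lip{5}_t,\Lip{6}_t$ exactly the recursion~\eqref{eq:H5H6}. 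Taking expectations, the $\mathcal S$-blocks of $\mathbf d_1$ and $\mathbf p_1$ coincide by~\eqref{eq:aux-dynamics-2}, while for $k\in\mathcal S^c$ the block $d^k_1$ is the empirical distribution of the i.i.d.\ initial states (Assumption~\ref{assumption:iid-x}) whose mean is $p^k_1=\mathbb P_{X^k_1}$, so Lemma~\ref{lemma:square-root-iid} gives $\Exp{\Ninf{\mathbf d_1-\mathbf p_1}}\le\BigO$; thus the second term is at most $(\Lip{5}_1+\Lip{6}_1)\BigO$.

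For the first (simulation) term, Lemma~\ref{lemma:lipschitz-hat-f-hat-l} gives $\big|J_N(\mathbf g^p)-\Exp{\VP_1(\mathbf p_1)}\big|\le \sum_{t=1}^T \Lip{4}_t\,\Exp{\Ninf{\mathbf d^p_t-\mathbf p_t}}$, so I would control $e_t:=\Exp{\Ninf{\mathbf d^p_t-\mathbf p_t}}$ by a scalar recursion. Since $\mathbf d^p_{t+1}=\bar f_t(\mathbf d^p_t,\boldsymbol\gamma^p_t,\underline{\mathbf w}_t)$ and $\mathbf p_{t+1}=\bar f^{\mathcal S}_t(\mathbf p_t,\boldsymbol\gamma^p_t,\underline{\mathbf w}_t)$, conditioning on the past and applying Lemma~\ref{lemma:lipschitz-MF-Z} (whose proof is where Proposition~\ref{prop:adaptable-mean-field-pmfs} and the decoupling Assumption~\ref{assumption:decoupled} enter, forcing the $\mathcal S$-blocks to agree a.s.\ and leaving only the $\mathcal S^c$-blocks to diffuse) gives $e_{t+1}\le \Lip{3}_t e_t+\BigO$, with $e_1\le\BigO$ as above. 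Unrolling shows $e_t\le\BigO$ for all $t\le T$ with a horizon-dependent constant expressible through $\Lip{3}_t,\Lip{4}_t$, i.e.\ through $\Lip{5}_1$; hence this term is also $\BigO$. Combining the two bounds gives $PoI\le(\Lip{5}_1+\Lip{6}_1)\BigO$ as in~\eqref{eq:epsilon-rate-pmfs}, where the $\BigO$-factor is $T$-free, and since $(\Lip{5}_1+\Lip{6}_1)\BigO\to 0$ as $n\to\infty$ the strategy~\eqref{eq:strategy-pmfs-sub-optimal} is $\varepsilon(n)$-optimal for Problem~\ref{prob:PMFS-finite}.

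The main obstacle will be the simulation term: one must show that the deviation $\mathbf d^p_t-\mathbf p_t$ between the true finite-population deep state and the surrogate mixed state stays of order $1/\sqrt n$ \emph{uniformly over the horizon} rather than accumulating, and that the per-step errors from Lemmas~\ref{lemma:lipschitz-hat-f-hat-l}--\ref{lemma:lipschitz-MF-Z} collapse into precisely the constants $\Lip{5}_1,\Lip{6}_1$ of~\eqref{eq:H5H6}; this rests on the exact matching of the observed ($\mathcal S$) blocks from Proposition~\ref{prop:adaptable-mean-field-pmfs}, hence on Assumption~\ref{assumption:decoupled}, and on the fact that the $1/\sqrt n$ term in Lemma~\ref{lemma:lipschitz-bar-f-hat-f} is generated afresh at each step rather than compounding with the Lipschitz gains. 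A secondary, purely technical point is the domain bookkeeping needed to apply Lemma~\ref{proposition:quantization-basic}: one extends $\bar f_t,\bar f^{\mathcal S}_t,\VM,\VP$ from $\MeanSpace,\DeltaS$ to $\IX$, or equivalently runs the backward induction of Lemma~\ref{proposition:quantization-basic} restricted to $\MeanSpace\times\DeltaS$, which is invariant under the respective dynamics.
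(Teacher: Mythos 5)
Your proposal is correct and follows essentially the same route as the paper: the same triangle-inequality split around $\Exp{\VP_1(\mathbf p_1)}$, the same use of Lemma~\ref{lemma:lipschitz-MF-Z} inside Lemma~\ref{proposition:quantization-basic} (with $\Lip{z}_t=\Lip{3}_t$, $\Delta=\BigO$) for the value-comparison term, and Lemma~\ref{lemma:square-root-iid} with Assumption~\ref{assumption:iid-x} for the initial gap. Your explicit unrolling of the simulation-term recursion $e_{t+1}\le\Lip{3}_t e_t+\BigO$ against the weights $\Lip{4}_t$ is exactly what the paper packages as Lemma~\ref{proposition:quantization-basic-equality}, and it reproduces the same constants $\Lip{5}_1,\Lip{6}_1$.
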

\begin{proof}
From the triangle inequality, $|J_N(\mathbf{g}^p) - J_N(\mathbf g^\ast)|$ is upper-bounded by
\begin{equation}\label{eq:inequality-triangle-pmfs-basic}
|J_N(\mathbf g^\ast) -\mathbb{E}[\VP_1(\mathbf p_1)]|+| J_N(\mathbf{g}^p) -\mathbb{E}[\VP_1(\mathbf p_1)]|.
\end{equation}
The first term in \eqref{eq:inequality-triangle-pmfs-basic} is associated with the difference between the optimal performance under DSS, given by Theorem~\ref{thm:mfs-finite}, and an approximate  cost-to-go function proposed  in~\eqref{eq:thm-dp-pmfs-finie-t}.   The upper bounds developed in Subsection~\ref{sec:lemmas} yield:
\begin{align}\label{eq:g-star-hat v-pmfs}
& |J_N(\mathbf g^\ast)- \mathbb{E}[ \VP_1(\mathbf{p}_1)]| \substack{(a)\\=}| \mathbb{E}[\VM_1(\mathbf d_1)]- \mathbb{E}[ \VP_1(\mathbf{p}_1)]| \nonumber \\
&\substack{(b) \\ \leq} \Exp{\big| \VM_1(\mathbf d_1) -\VP_1(\mathbf p_1) \big| } 
 \substack{(c) \\ \leq} \Lip{5}_1 \mathbb{E}\Ninf{ \mathbf d_1 - \mathbf p_1}+\Lip{6}_1\BigO \nonumber \\
&\substack{(d) \\ \leq} (\Lip{5}_1+\Lip{6}_1)\BigO,
\end{align}
where $(a)$ follows from Theorem~\ref{thm:mfs-finite}, i.e., $J_N(\mathbf g^\ast)=\mathbb{E}[\VM_1(\mathbf d_1)]$; $(b)$ follows from the fact  that for every random variable $ y \in \mathbb{R}$, $|\Exp{y}| \leq \Exp{|y|}$; $(c)$ follows from equations \eqref{eq:thm-dp-mfs-finie-t}, \eqref{eq:thm-dp-pmfs-finie-t},  and Lemmas~\ref{proposition:quantization-basic}  and~\ref{lemma:lipschitz-MF-Z}, where $\Lip{z}_t:= \Lip{3}_t$  and $\Delta=\BigO$, and $(d)$ follows from Assumption~\ref{assumption:iid-x}, equation \eqref{eq:aux-dynamics-2}, and Lemma~\ref{lemma:square-root-iid}. 

The second term in \eqref{eq:inequality-triangle-pmfs-basic} corresponds to the difference between the performance of the system  under the proposed strategy~\eqref{eq:strategy-pmfs-sub-optimal} and the approximate cost-to-go function defined  in~\eqref{eq:thm-dp-pmfs-finie-t}.  Let  $\boldsymbol \gamma^p_t=\boldsymbol \psi^p_t(\mathbf p_t)$ be  a minimizer of equation~\eqref{eq:thm-dp-pmfs-finie-t}; then, equation~\eqref{eq:thm-dp-pmfs-finie-t}  simplifies to:
 \begin{equation}\label{eq:proof-q-2-pmfs}
 \VP_1(\mathbf p_1)= \Exp{\sum_{t=1}^T \LS_t(\mathbf p_t, \boldsymbol \gamma^p_t) \mid \mathbf p_1}.
 \end{equation}
According to  equations  \eqref{eq:J-general-def}, \eqref{eq:hat-l-def},  \eqref{eq:proof-joint-mean-field-1} and~\eqref{eq:definition-bar f-pmfs},  the performance of  the proposed strategy~\eqref{eq:strategy-pmfs-sub-optimal}  is given by:
\begin{equation}\label{eq:proof-q-1-pmfs}
J(\mathbf g^p)=\Exp{ \sum_{t=1}^T \LS_t(\mathbf d^p_t, \boldsymbol \gamma^p_t)}.
\end{equation}
From equations \eqref{eq:definition-bar f-pmfs}, \eqref{eq:def-tilde-f-pmfs}, and Lemma~\ref{lemma:lipschitz-MF-Z}, the approximation error between processes $\mathbf d^p_{1:T}$ and  $\mathbf p_{1:T}$ is bounded as follows:  for any $\mathbf d^p_t \in \MeanSpace$ and $ \mathbf p_t \in \DeltaS$,
\begin{equation}\label{eq:hat j- hat v- 2-pmfs}
\Exp{\Ninf{\mathbf{d}^p_{t+1}- \mathbf p_{t+1}}}  \leq  \Lip{3}_t \Ninf{\mathbf{d}^p_{t}- \mathbf p_{t}}+\BigO, \quad \NT.
\end{equation}
 Therefore,  it results from \eqref{eq:proof-q-2-pmfs} and \eqref{eq:proof-q-1-pmfs}  that:
\begin{align}\label{eq:hat j-hat v-1-pmfs}
&| J_N(\mathbf{g}^p) - \mathbb{E}[\VP_1(\mathbf{p}_1) ]| =  | \mathbb{E}\big[\sum_{t=1}^T \LS_t(\mathbf d^p_t, \boldsymbol \gamma^p_t) - \sum_{t=1}^T \LS_t(\mathbf p_t,\boldsymbol \gamma^p_t)]| \nonumber\\
&\substack{(e) \\ \leq}  \Lip{5}_1 \mathbb{E} \| \mathbf{d}^p_1 - \mathbf p_1\| + \Lip{6}_1 \BigO \substack{(f) \\ \leq}  (\Lip{5}_1 + \Lip{6}_1) \BigO,
\end{align}
where $(e)$ follows from  equation~\eqref{eq:hat j- hat v- 2-pmfs}, Lemma~\ref{proposition:quantization-basic-equality}  (where $\Lip{z}_t:= \Lip{3}_t$  and $\Delta=\BigO$), and  the fact  that for every random variable $ y \in \mathbb{R}$, $|\Exp{y}| \leq \Exp{|y|}$, and $(f)$ follows from Assumption~\ref{assumption:iid-x}, equation \eqref{eq:aux-dynamics-2} and Lemma~\ref{lemma:square-root-iid}.  The proof of Theorem~\ref{thm:pmfs-finite} results from  \eqref{eq:inequality-triangle-pmfs-basic}, \eqref{eq:g-star-hat v-pmfs}, and \eqref{eq:hat j-hat v-1-pmfs}.
\end{proof}

\section{Quantization Results}\label{sec:quantized}
In this section,  we present quantized solutions for Problems~\ref{prob:MFS-finite} and~\ref{prob:PMFS-finite} to reduce  the complexity in  space  for   dynamic programs~\eqref{eq:thm-dp-mfs-finie-t} and~\eqref{eq:thm-dp-pmfs-finie-t}  in Theorems~\ref{thm:mfs-finite} and \ref{thm:pmfs-finite}, respectively.

 Given a subset $\mathcal{R} \subseteq \mathcal{K}$ of sub-populations and the number of quantization levels  $\qss \in \mathbb{N}$, we are interested to find an $\varepsilon(\qss)$-optimal solution for Problem~\ref{prob:MFS-finite} such that the computational complexity of finding this solution in space is independent of the size of sub-populations $\mathcal{R}$.  To distinguish this solution from the optimal one under DSS, we use superscript $q$ for the parameters associated with the quantized solution. Denote by $\mathbf g^q$ the strategy of the quantized solution and by $\mathbf d_t^q \in \MeanSpace$  the deep state of the system under strategy $\mathbf g^q$ at $\NT$, where $\mathbf d^q_1=\mathbf d_1$. We impose an assumption similar  to Assumption~\ref{assumption:decoupled}  as follows.
 \begin{Assumption}\label{assumption:decoupled-R}
Assumption~\ref{assumption:decoupled}  holds for  sub-populations $\mathcal{R}$, where  $\mathcal{S}^c$   is substituted by $\mathcal{R}$.
\end{Assumption}


 Define real-valued functions $\{\VQ_1,\ldots,\VQ_T, \VQ_{T+1}\}$ backward in time such that
$\VQ_{T+1}(\mathbf{q})=0$ for any $ \mathbf q \in \QR.$ Also, for any
 $\NT$ and  $\mathbf{q}\in \QR$, define
\begin{equation}\label{eq:thm-dp-mfs-finie-Q-t}
\VQ_t(\mathbf{q})=\min_{\boldsymbol{\gamma} \in \mathcal{G}} \left(\LS_t\left(\mathbf q, \boldsymbol \gamma \right) + \Exp{\VQ_{t+1}\left(Q(\bar f_t( \mathbf q, \boldsymbol{ \gamma}, \underline{\mathbf w}_t))\right)}\right).
\end{equation}
Denote by $\boldsymbol \psi^q_t(\mathbf{q})=\{\psi^{q,1}_t(\mathbf{q}),\ldots,\psi^{q,K}_t(\mathbf{q})\}$ an argmin of the right-hand side of \eqref{eq:thm-dp-mfs-finie-Q-t}.  Let  agent $i \in \mathcal{N}^k$ of sub-population $k \in \mathcal{K}$ at time $\NT$ take the following action:
\begin{equation}\label{eq:thm:q-local-law}
u^i_t=g^{q,k}_t(x^i_t,Q(\mathbf d^q_t)):=\psi^{q,k}_t(Q(\mathbf d^q_t))(x^i_t), \quad  x^i_t \in \mathcal{X}^k, \mathbf d^q_t \in \MeanSpace.
\end{equation}
Let also $\mathbf g^q=\{ (g^{q,k}_t)_{k \in \mathcal{K}}\}_{t=1}^T$ denote  the  strategy proposed  in~\eqref{eq:thm:q-local-law}.  We define the price of computation as follows.
\begin{Definition}[\textbf{Price of computation}]
 The price of computation (PoC) is defined as the loss of performance  due to using  the  strategy $\mathbf g^q$  rather than  the optimal strategy $\mathbf g^\ast$, i.e.,
 \begin{equation}\label{eq:price_computation}
 PoC:=| J_N(\mathbf{g}^q) - J_N(\mathbf g^\ast)|.
 \end{equation}
\end{Definition}

\begin{Theorem}\label{thm:mfs-quantized}
Let Assumptions~\ref{assumption:iid}, \ref{assumption:Lipschitz-finite}, and \ref{assumption:decoupled-R} hold.  The price of computation~\eqref{eq:price_computation}  converges to zero at the rate $1/ \qss$, i.e.,
\begin{equation}\label{eq:epsilon-rate-Q-mfs}
PoC  \leq  (\Lip{5}_1+\Lip{6}_1) \big(\frac{1}{\qss}\big),
\end{equation}
where $\Lip{5}_1$ and $\Lip{6}_1$ are given  in Theorem~\ref{thm:pmfs-finite}.  This implies that strategy~\eqref{eq:thm:q-local-law}  is an $\varepsilon(r)$-optimal strategy for Problem~\ref{prob:MFS-finite}.
\end{Theorem}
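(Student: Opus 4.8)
The plan is to follow the proof of Theorem~\ref{thm:pmfs-finite} almost verbatim, with the mean-field substitution there replaced by the quantizer $Q$. By the triangle inequality,
\[
PoC\;\le\;\big|J_N(\mathbf g^\ast)-\Exp{\VQ_1(Q(\mathbf d_1))}\big|\;+\;\big|J_N(\mathbf g^q)-\Exp{\VQ_1(Q(\mathbf d_1))}\big|,
\]
and I bound the two terms separately. The single structural change relative to Theorem~\ref{thm:pmfs-finite} is that wherever that proof produced an error $\BigO$ (through Lemma~\ref{lemma:square-root-iid} and Assumption~\ref{assumption:iid-x}), the corresponding term here is the \emph{deterministic} bound $\tfrac{1}{2\qss}$, coming from $\Ninf{\mathbf z-Q(\mathbf z)}\le\tfrac{1}{2\qss}$. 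Assumption~\ref{assumption:decoupled-R} plays the role Assumption~\ref{assumption:decoupled} played before: it guarantees that $\bar f_t$ maps $\QR$ into $\IXR$, so every composition $Q\circ\bar f_t$ below is well defined, and that $\QR$ and $\LS_t$ are independent of $|\mathcal N^k|$, $k\in\mathcal R$.

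\emph{First term.} By Theorem~\ref{thm:mfs-finite}, $J_N(\mathbf g^\ast)=\Exp{\VM_1(\mathbf d_1)}$, so this term is at most $\Exp{|\VM_1(\mathbf d_1)-\VQ_1(Q(\mathbf d_1))|}$. I invoke Lemma~\ref{proposition:quantization-basic} with $f^1_t:=\bar f_t$, $f^2_t:=Q\circ\bar f_t$, $\mathbf z^1:=\mathbf d_1$, $\mathbf z^2:=Q(\mathbf d_1)$, noting that $\VM$ and $\VQ$ are exactly the value functions of~\eqref{eq:prop-D-t} for these two dynamics. Its hypothesis~\eqref{eq:prop-z1-z2} holds with $\Lip{z}_t:=\Lip{3}_t$ and $\Delta:=\tfrac{1}{2\qss}$: inserting $Q$ gives $\Ninf{\bar f_t(\mathbf z^1,\boldsymbol\gamma,\underline{\mathbf w}_t)-Q(\bar f_t(\mathbf z^2,\boldsymbol\gamma,\underline{\mathbf w}_t))}\le\Ninf{\bar f_t(\mathbf z^1,\boldsymbol\gamma,\underline{\mathbf w}_t)-\bar f_t(\mathbf z^2,\boldsymbol\gamma,\underline{\mathbf w}_t)}+\tfrac{1}{2\qss}$, and after taking expectations the first summand is $\le\Lip{3}_t\Ninf{\mathbf z^1-\mathbf z^2}$ by the augmented form of Lemma~\ref{lemma:lipschitz-bar-f-bar-f}. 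Hence $|\VM_1(\mathbf d_1)-\VQ_1(Q(\mathbf d_1))|\le\Lip{5}_1\Ninf{\mathbf d_1-Q(\mathbf d_1)}+\Lip{6}_1\tfrac{1}{2\qss}$, and since $\Ninf{\mathbf d_1-Q(\mathbf d_1)}\le\tfrac{1}{2\qss}$ and $\Lip{z}_t=\Lip{3}_t$ makes the recursion~\eqref{eq:H5H6} coincide with the one of Theorem~\ref{thm:pmfs-finite}, the first term is $\le(\Lip{5}_1+\Lip{6}_1)\tfrac{1}{2\qss}$.

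\emph{Second term.} Let $\boldsymbol\gamma^q_t:=\boldsymbol\psi^q_t(Q(\mathbf d^q_t))$ be the local law actually played by strategy~\eqref{eq:thm:q-local-law}. By Theorem~\ref{thm:finite-mfs-polynomial} the realized deep state obeys $\mathbf d^q_{t+1}=\bar f_t(\mathbf d^q_t,\boldsymbol\gamma^q_t,\underline{\mathbf w}_t)$ with $\mathbf d^q_1=\mathbf d_1$, and by~\eqref{eq:J-general-def}, \eqref{eq:hat-l-def}, \eqref{eq:proof-joint-mean-field-1} the realized cost equals $\Exp{\sum_{t=1}^T\LS_t(\mathbf d^q_t,\boldsymbol\gamma^q_t)}$; unrolling the quantized program~\eqref{eq:thm-dp-mfs-finie-Q-t} along its minimizers produces a trajectory $\mathbf q_t$ with $\mathbf q_1=Q(\mathbf d_1)$, $\mathbf q_{t+1}=Q(\bar f_t(\mathbf q_t,\boldsymbol\gamma^q_t,\underline{\mathbf w}_t))$, along which $\Exp{\VQ_1(Q(\mathbf d_1))}=\Exp{\sum_{t=1}^T\LS_t(\mathbf q_t,\boldsymbol\gamma^q_t)}$. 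Both trajectories are driven by the same local laws, and the one-step bound $\Exp{\Ninf{\bar f_t(\mathbf d^q_t,\boldsymbol\gamma^q_t,\underline{\mathbf w}_t)-Q(\bar f_t(\mathbf q_t,\boldsymbol\gamma^q_t,\underline{\mathbf w}_t))}}\le\Lip{3}_t\Ninf{\mathbf d^q_t-\mathbf q_t}+\tfrac{1}{2\qss}$ follows, as in the first term, from inserting $Q$ and applying Lemma~\ref{lemma:lipschitz-bar-f-bar-f}. Lemma~\ref{proposition:quantization-basic-equality} (the $\min$-free version of Lemma~\ref{proposition:quantization-basic}) then gives $|J_N(\mathbf g^q)-\Exp{\VQ_1(Q(\mathbf d_1))}|\le\Lip{5}_1\Exp{\Ninf{\mathbf d^q_1-\mathbf q_1}}+\Lip{6}_1\tfrac{1}{2\qss}\le(\Lip{5}_1+\Lip{6}_1)\tfrac{1}{2\qss}$, using $\mathbf d^q_1=\mathbf d_1$ and $\mathbf q_1=Q(\mathbf d_1)$. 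Summing the two bounds yields $PoC\le(\Lip{5}_1+\Lip{6}_1)\tfrac{1}{\qss}$, i.e.~\eqref{eq:epsilon-rate-Q-mfs}, and the $\varepsilon(\qss)$-optimality for Problem~\ref{prob:MFS-finite} then follows from the definition~\eqref{eq:price_computation} of $PoC$.

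The step I expect to be the main obstacle is the second term, and more precisely the passage I glossed over: in Theorem~\ref{thm:pmfs-finite} the mixed state $\mathbf p_t$ was simultaneously the quantity on which the proposed strategy conditioned \emph{and} the state of the approximating program, which is what made the two accumulated costs comparable with the \emph{same} control; here the quantity the strategy conditions on, $Q(\mathbf d^q_t)$, is not a closed-loop state, since $\mathbf d^q_t$ itself evolves exactly rather than through $Q\circ\bar f_t$, so the identification $\boldsymbol\gamma^q_t=\boldsymbol\psi^q_t(\mathbf q_t)$ and the relation $\mathbf q_{t+1}=Q(\bar f_t(\mathbf q_t,\boldsymbol\gamma^q_t,\underline{\mathbf w}_t))$ cannot both be enforced simultaneously. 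Making this rigorous will require either (i) an induction showing $Q(\mathbf d^q_t)$ stays within $\mathcal{O}(1/\qss)$ of the dynamic-programming trajectory $\mathbf q_t$, using Lemma~\ref{lemma:lipschitz-bar-f-bar-f} together with the fact that $Q$ is $\infty$-norm nonexpansive up to an additive $1/\qss$, or (ii) a telescoping (performance-difference) estimate along $\VQ_1,\dots,\VQ_T$, in which each step contributes the cost discrepancy $\le\Lip{4}_t\Ninf{\mathbf d^q_t-Q(\mathbf d^q_t)}$ plus a value-function term controlled by a Lipschitz bound for $\VQ_t$ obtained from the mechanism of Lemma~\ref{proposition:quantization-basic}; in either case one checks that the residual telescopes to $\mathcal{O}(1/\qss)$ with the stated constants. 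The first term and the assembly of the final rate are routine given the lemmas already established.
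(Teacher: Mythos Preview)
Your approach is exactly what the paper does: its proof of Theorem~\ref{thm:mfs-quantized} is a two-sentence sketch saying the argument ``follows along the same lines as that of Theorem~\ref{thm:pmfs-finite}'' with the $\mathcal O(1/\sqrt n)$ error replaced by the quantization bound $\tfrac{1}{2\qss}$, invoking the same Lipschitz machinery of Subsection~\ref{sec:lemmas}. Your triangle-inequality split, the use of Lemma~\ref{proposition:quantization-basic} (with $\Lip{z}_t=\Lip{3}_t$, $\Delta=\tfrac{1}{2\qss}$) for the first term, and Lemma~\ref{proposition:quantization-basic-equality} for the second term mirror the paper's Theorem~\ref{thm:pmfs-finite} proof line by line.

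The subtlety you flag in the second term is genuine, and the paper's sketch glosses over it just as much as you initially did. In Theorem~\ref{thm:pmfs-finite} the two trajectories $\mathbf d^p_t$ and $\mathbf p_t$ are driven by the \emph{same} local law because the strategy conditions on $\mathbf p_t$ itself (Proposition~\ref{prop:adaptable-mean-field-pmfs}), and $\mathbf p_t$ is simultaneously the state of the approximating dynamic program. Here the strategy conditions on $Q(\mathbf d^q_t)$, which is \emph{not} a closed-loop state of the quantized program~\eqref{eq:thm-dp-mfs-finie-Q-t}, so one cannot simultaneously have $\boldsymbol\gamma^q_t=\boldsymbol\psi^q_t(\mathbf q_t)$ and $\mathbf q_{t+1}=Q(\bar f_t(\mathbf q_t,\boldsymbol\gamma^q_t,\underline{\mathbf w}_t))$. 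Your option~(ii) is the clean fix: define $\mathbf q_t$ to evolve under $Q\circ\bar f_t$ with the \emph{realized} control $\boldsymbol\gamma^q_t=\boldsymbol\psi^q_t(Q(\mathbf d^q_t))$; then Lemma~\ref{proposition:quantization-basic-equality} gives $|J_N(\mathbf g^q)-\Exp{\sum_t\LS_t(\mathbf q_t,\boldsymbol\gamma^q_t)}|\le(\Lip{5}_1+\Lip{6}_1)\tfrac{1}{2\qss}$, and since $\boldsymbol\gamma^q_t$ is feasible (not necessarily optimal) in~\eqref{eq:thm-dp-mfs-finie-Q-t} one has $\Exp{\VQ_1(Q(\mathbf d_1))}\le\Exp{\sum_t\LS_t(\mathbf q_t,\boldsymbol\gamma^q_t)}$, which combined with the first-term bound and $J_N(\mathbf g^\ast)\le J_N(\mathbf g^q)$ closes the estimate for $PoC$ with the stated constant.
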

\begin{proof}
The proof follows along the same lines as that of Theorem~\ref{thm:pmfs-finite} with the distinction that the source of error   is  the quantization error, that is bounded by $\frac{1}{2r}$ from the definition of the quantizer function $Q$. Using the same  upper bounds developed in Subsection~\eqref{sec:lemmas},  it can be shown that the propagation of  the quantization error  over  the dynamics and value functions are  bounded by the same Lipschitz  gains  introduced in Theorem~\ref{thm:pmfs-finite}.
\end{proof}

In general, finding an exact solution to the dynamic program~\eqref{eq:thm-dp-pmfs-finie-t}  is  challenging  because set $\DeltaS$ is uncountable for $\mathcal{S} \neq \mathcal{K}$.  To address this challenge, we propose a quantized dynamic program similar to that in Theorem~\ref{thm:mfs-quantized}.   Let $\mathcal{R}$ in Theorem~\ref{thm:mfs-quantized} be equal to $\mathcal{S}^c$ in Theorem~\ref{thm:pmfs-finite}. We are interested in finding an $\varepsilon(n,\qss)$-optimal solution for Problem~\ref{prob:PMFS-finite} such that  not only the deep states of sub-populations $\mathcal{S}^c$ are not  communicated among agents, but also the computational complexity of finding such a solution is independent of the size of sub-populations $\mathcal{S}^c$.  We use superscript $pq$  for the parameters associated to this solution. 


 Define real-valued functions $\{\VPQ_1,\ldots,\VPQ_T, \VPQ_{T+1}\}$ backward in time such that
$\VPQ_{T+1}(\mathbf{q})=0$ for any  $\mathbf q \in \QS$.
Also,  for any $\NT$ and $\mathbf{q}\in \QS$, define
\begin{equation}\label{eq:thm-dp-pmfs-finie-Q-t}
\VPQ_t(\mathbf{q})=\min_{\boldsymbol{\gamma} \in \mathcal{G}} \left(\LS_t\left(\mathbf q, \boldsymbol \gamma \right) + \Exp{\VPQ_{t+1}\left(Q(\bar f^{\mathcal{S}}_t( \mathbf q, \boldsymbol{ \gamma},\underline{ \mathbf w}_t))\right)}\right).
\end{equation}
Denote by $\boldsymbol \psi^{pq}_t(\mathbf{q})=\{\psi^{pq,1}_t(\mathbf{q}),\ldots,\psi^{pq,K}_t(\mathbf{q})\}$ an argmin of the right-hand side of \eqref{eq:thm-dp-pmfs-finie-Q-t}. Let  agent $i \in \mathcal{N}^k$ of sub-population $k \in \mathcal{K}$ at time $t \in \mathbb{N}_T$ take the following action:
\begin{equation}\label{eq:thm:p-local-law}
g^{pq,k}_t(x^i_t,Q(\mathbf p_t)):=\psi^{pq,k}_t(Q(\mathbf p_t))(x^i_t), \quad x^i_t \in \mathcal{X}^k.
\end{equation}
\begin{Theorem}\label{thm:pmfs-quantized}
Let Assumptions~\ref{assumption:iid}, \ref{assumption:iid-x}, \ref{assumption:decoupled} and  \ref{assumption:Lipschitz-finite} hold. 
Then, $\mathbf g^{pq}:=\{(g^{pq,k}_t)_{k \in \mathcal{S}}\}_{t=1}^T$  is an $\varepsilon(n,r)$-optimal solution for Problem~\ref{prob:PMFS-finite}, i.e.,
\begin{equation}\label{eq:epsilon-rate-Q-pmfs}
\big| J(\mathbf g^{pq}) - J(\mathbf g^\ast) \big|\  \leq \varepsilon(n,\qss) = (\Lip{5}_1+\Lip{6}_1) ( \BigO+\big(\frac{1}{\qss}\big)),
\end{equation}
where   $\Lip{5}_1$, $\Lip{6}_1$, and  $\BigO$ are given in Theorem~\ref{thm:pmfs-finite}.  
\end{Theorem}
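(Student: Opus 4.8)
The plan is to superpose the two perturbation analyses already available: the mean-field substitution of Theorem~\ref{thm:pmfs-finite} (error of order $1/\sqrt n$) and the uniform quantization of Theorem~\ref{thm:mfs-quantized} (error of order $1/\qss$), carrying both error sources through the dynamics and the value functions simultaneously. To do this I would run the true deep-state process alongside a single \emph{quantized mixed-state} process $\mathbf q_{1:T}$ defined by $\mathbf q_1:=Q(\mathbf p_1)\in\QS$ and $\mathbf q_{t+1}:=Q(\bar f^{\mathcal{S}}_t(\mathbf q_t,\boldsymbol\gamma^{pq}_t,\underline{\mathbf w}_t))$, where $\boldsymbol\gamma^{pq}_t:=\boldsymbol\psi^{pq}_t(\mathbf q_t)$ is a minimizer of~\eqref{eq:thm-dp-pmfs-finie-Q-t}; since $\mathbf q_t$ is a deterministic function of $(d^{p,k}_{1:t})_{k\in\mathcal{S}}$ (Proposition~\ref{prop:adaptable-mean-field-pmfs} composed with the fixed map $Q$), strategy~\eqref{eq:thm:p-local-law} is PDSS-implementable and, fixing this minimizer, $\VPQ_1(\mathbf q_1)=\Exp{\sum_{t=1}^T\LS_t(\mathbf q_t,\boldsymbol\gamma^{pq}_t)\mid\mathbf q_1}$. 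By the triangle inequality it then suffices to bound $\big|J(\mathbf g^\ast)-\Exp{\VPQ_1(\mathbf q_1)}\big|$ and $\big|J(\mathbf g^{pq})-\Exp{\VPQ_1(\mathbf q_1)}\big|$ separately.

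For the first term I would write $J(\mathbf g^\ast)=\Exp{\VM_1(\mathbf d_1)}$ via Theorem~\ref{thm:mfs-finite} and invoke Lemma~\ref{proposition:quantization-basic} with $f^1_t:=\bar f_t$, $f^2_t:=Q\circ\bar f^{\mathcal{S}}_t$, initial points $\mathbf d_1,\mathbf q_1$, $\Lip{z}_t:=\Lip{3}_t$, and $\Delta:=\BigO+\tfrac{1}{2\qss}$. Its hypothesis~\eqref{eq:prop-z1-z2} is checked by inserting $\bar f^{\mathcal{S}}_t(\mathbf z^2,\cdot)$ as an intermediate term, which gives $\Exp{\Ninf{\bar f_t(\mathbf z^1,\boldsymbol\gamma,\underline{\mathbf w}_t)-Q(\bar f^{\mathcal{S}}_t(\mathbf z^2,\boldsymbol\gamma,\underline{\mathbf w}_t))}}\le\Lip{3}_t\Ninf{\mathbf z^1-\mathbf z^2}+\BigO+\tfrac{1}{2\qss}$ by Lemma~\ref{lemma:lipschitz-MF-Z} (Assumption~\ref{assumption:decoupled}, applied to $\mathcal{S}^c$, is what makes $\bar f^{\mathcal{S}}_t$ well defined here) together with the quantizer bound $\Ninf{\mathbf z-Q(\mathbf z)}\le\tfrac{1}{2\qss}$. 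The initial slack obeys $\Exp{\Ninf{\mathbf d_1-\mathbf q_1}}\le\Exp{\Ninf{\mathbf d_1-\mathbf p_1}}+\Ninf{\mathbf p_1-\mathbf q_1}\le\BigO+\tfrac{1}{2\qss}$ by Assumption~\ref{assumption:iid-x}, equation~\eqref{eq:aux-dynamics-2}, Lemma~\ref{lemma:square-root-iid}, and the quantizer bound. Lemma~\ref{proposition:quantization-basic} then delivers $\big|J(\mathbf g^\ast)-\Exp{\VPQ_1(\mathbf q_1)}\big|\le(\Lip{5}_1+\Lip{6}_1)(\BigO+\tfrac{1}{\qss})$ with exactly the recursions~\eqref{eq:H5H6} (at $\Lip{z}_t=\Lip{3}_t$) used for Theorem~\ref{thm:pmfs-finite}.

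For the second term I would fix $\boldsymbol\gamma^{pq}_{1:T}$ and use Theorem~\ref{thm:finite-mfs-polynomial} — which by Remark~\ref{remark:control_law_independence} is insensitive to the global laws — to get the genuine closed-loop recursion $\mathbf d^{pq}_{t+1}=\bar f_t(\mathbf d^{pq}_t,\boldsymbol\gamma^{pq}_t,\underline{\mathbf w}_t)$ almost surely, hence $J(\mathbf g^{pq})=\Exp{\sum_{t=1}^T\LS_t(\mathbf d^{pq}_t,\boldsymbol\gamma^{pq}_t)}$. Comparing $\mathbf d^{pq}_t$ and $\mathbf q_t$ under the common local laws, the one-step discrepancy satisfies $\Exp{\Ninf{\mathbf d^{pq}_{t+1}-\mathbf q_{t+1}}}\le\Lip{3}_t\Ninf{\mathbf d^{pq}_t-\mathbf q_t}+\BigO+\tfrac{1}{2\qss}$ (again Lemma~\ref{lemma:lipschitz-MF-Z} plus the quantizer bound) with $\Exp{\Ninf{\mathbf d^{pq}_1-\mathbf q_1}}\le\BigO+\tfrac{1}{2\qss}$, so Lemma~\ref{proposition:quantization-basic-equality} (the $\min$-free version, applied with $\boldsymbol\gamma:=\boldsymbol\gamma^{pq}_t$, $\Lip{z}_t:=\Lip{3}_t$, $\Delta:=\BigO+\tfrac{1}{2\qss}$) bounds this term by $(\Lip{5}_1+\Lip{6}_1)(\BigO+\tfrac{1}{\qss})$ as well. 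Summing the two bounds gives~\eqref{eq:epsilon-rate-Q-pmfs}, with $\Lip{5}_1,\Lip{6}_1$ the constants of Theorem~\ref{thm:pmfs-finite} and the $\BigO$ term horizon-independent because the recursions terminate after $T$ steps.

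The main obstacle is bookkeeping rather than any single estimate: I must be sure that appending the quantizer $Q$ to $\bar f^{\mathcal{S}}_t$ affects only the additive slack $\Delta$ (by $\tfrac{1}{2\qss}$) and never the multiplicative gain $\Lip{z}_t=\Lip{3}_t$, so that the recursive constants $\Lip{5}_1,\Lip{6}_1$ coincide with those of Theorem~\ref{thm:pmfs-finite}. This goes through precisely because in each comparison one of the two arguments is an \emph{unquantized} map ($\bar f_t$ in both terms), so the quantizer can always be peeled off as a pure $\Ninf{\mathbf z-Q(\mathbf z)}\le\tfrac{1}{2\qss}$ increment instead of forcing one to control $\Ninf{Q(\mathbf a)-Q(\mathbf b)}$, for which $Q$ is not nonexpansive. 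A secondary check is that $\mathbf q_t$ is a legitimate PDSS statistic (measurable with respect to $(d^{p,k}_{1:t})_{k\in\mathcal{S}}$), which is immediate from Proposition~\ref{prop:adaptable-mean-field-pmfs} since $Q$ is a fixed deterministic map.
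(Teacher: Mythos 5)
Your proof is correct and follows exactly the route the paper intends: the paper's own proof of this theorem is just the one-line remark that it "follows from Theorems~\ref{thm:pmfs-finite} and~\ref{thm:mfs-quantized}," and your argument is precisely that combination — running the triangle-inequality decomposition of Theorem~\ref{thm:pmfs-finite} with the additive slack $\Delta$ enlarged from $\BigO$ to $\BigO+\tfrac{1}{2r}$ via the quantizer bound, while keeping the gains $\Lip{z}_t=\Lip{3}_t$ and hence the constants $\Lip{5}_1,\Lip{6}_1$ unchanged. Your explicit handling of the recursively quantized mixed state $\mathbf q_t$ (its PDSS-measurability and the fact that $Q$ is always peeled off an unquantized comparand) supplies details the paper omits, and they check out.
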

\begin{proof}
The proof follows  from Theorems~\ref{thm:pmfs-finite} and~\ref{thm:mfs-quantized}. To avoid repetition, the detailed proof is omitted.
\end{proof}

\begin{Corollary}\label{cor:rate}
If $r  \geq  \sqrt{n}$, then the rate of convergence of the solution of Theorem~\ref{thm:pmfs-quantized} is the same as that of Theorem~\ref{thm:pmfs-finite}.
\end{Corollary}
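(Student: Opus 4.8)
The plan is to obtain the claim as a direct consequence of the explicit error bound already established in Theorem~\ref{thm:pmfs-quantized}. That theorem gives $\big| J(\mathbf g^{pq}) - J(\mathbf g^\ast) \big| \leq (\Lip{5}_1+\Lip{6}_1)\big(\BigO + \tfrac{1}{\qss}\big)$, while Theorem~\ref{thm:pmfs-finite} gives $\big| J_N(\mathbf g^p) - J_N(\mathbf g^\ast) \big| \leq (\Lip{5}_1+\Lip{6}_1)\BigO$, with the constants $\Lip{5}_1, \Lip{6}_1$ generated by the same backward recursion from the Lipschitz gains of Lemma~\ref{lemma:lipschitz-hat-f-hat-l}. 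The first thing to note is that these constants depend only on the per-step Lipschitz constants and the horizon $T$; in particular they are independent of both $n$ and $\qss$, so the entire $(n,\qss)$-dependence of the quantized bound is carried by the factor $\BigO + \tfrac{1}{\qss}$.

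Next I would invoke the hypothesis $\qss \geq \sqrt n$, which gives $\tfrac{1}{\qss} \leq \tfrac{1}{\sqrt n}$ and hence $\tfrac{1}{\qss} = \BigO$. Consequently $\BigO + \tfrac{1}{\qss} \leq 2\,\BigO$, the factor of $2$ being absorbed into the implied constant, so $\big| J(\mathbf g^{pq}) - J(\mathbf g^\ast) \big| \leq (\Lip{5}_1 + \Lip{6}_1)\BigO$. This is precisely the bound satisfied by the unquantized sub-optimal strategy in Theorem~\ref{thm:pmfs-finite}; informally, once the quantization is fine enough that $\tfrac{1}{\qss}$ is dominated by the mean-field approximation error $\mathcal{O}(1/\sqrt n)$, quantizing the mixed state does not worsen the asymptotic rate, only the multiplicative constant.

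Since this is a one-line deduction from an inequality that has already been proven, I do not expect any real obstacle. The only points that require care are bookkeeping ones: making explicit that the $\mathcal{O}(\cdot)$ constants are independent of the control horizon $T$ (as asserted in Theorem~\ref{thm:pmfs-finite}), so that the conclusion is a genuine statement about the convergence rate in $n$ that is uniform in $T$; and observing that the choice $\qss = \lceil \sqrt n \,\rceil$ attains this rate while keeping the size of the quantized state space $\QS$ polynomial in $\sqrt n$, which is the practical point of the corollary.
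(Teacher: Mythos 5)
Your proposal is correct and matches the paper's (implicit) reasoning: the corollary is a direct consequence of the bound in Theorem~\ref{thm:pmfs-quantized}, since $\qss \geq \sqrt{n}$ gives $\frac{1}{\qss} \leq \frac{1}{\sqrt{n}}$ and the constants $\Lip{5}_1,\Lip{6}_1$ are independent of $n$ and $\qss$, so the total error remains $\BigO$. The paper offers no separate proof for this corollary, and your one-line deduction with the additional bookkeeping remarks is exactly what is intended.
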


\section{Infinite horizon discounted cost }\label{sec:infinite-horizon}
In this section, we generalize our main results to the  infinite-horizon discounted cost.   It is assumed that the dynamics and per-step cost in Problems~\ref{prob:MFS-finite} and~\ref{prob:PMFS-finite} are \textit{time-homogeneous}, and that the information structures are the  same as those in Subsection~\ref{sec:admissible}.   Given  a discount factor $\beta \in (0,1)$, the performance of any strategy $\mathbf{g}$  is described by:
\begin{equation}
J^{\beta}_N(\mathbf g)=\mathbb{E}^{\mathbf{g}}\big[\sum_{t=1}^\infty \beta^{t-1} c(\boldsymbol{\mathfrak{D}}_t)\big]. 
\end{equation}

\begin{Theorem}\label{thm:mfs-finite-discounted}
Let  Assumption~\ref{assumption:exchangeable} hold. The optimal solution  of Problem~\ref{prob:MFS-finite} with  the infinite-horizon discounted cost  is obtained from the following Bellman equation, i.e.,  for any $\mathbf d \in \MeanSpace$:
\begin{equation}\label{eq:thm-dp-mfs-finie-discounted}
\VM(\mathbf{d})=\min_{\boldsymbol \gamma \in \mathcal{G}} \big(\LS(\mathbf d, \boldsymbol \gamma) +\beta \Exp{ \VM(\bar f(\mathbf d, \boldsymbol \gamma, \underline{\mathbf w}))}\big),
\end{equation}
where the above expectation is taken with respect to $\underline{\mathbf{w}} \in \underline{\mathcal{W}}$. Let $\boldsymbol \psi^{*}(\mathbf{d})=\{\psi^{*,1}(\mathbf{d}),\ldots,\psi^{*,K}(\mathbf{d})\}$ be an argmin of the right-hand side of \eqref{eq:thm-dp-mfs-finie-discounted}. Then,  the optimal control  law of agent $i \in \mathcal{N}^k$ of sub-population $k \in \mathcal{K}$ at time $t \in \mathbb{N}$ is given by:
\begin{equation}
g^{\ast,k}(x^i_t, \mathbf d_t):=\psi^{*,k}(\mathbf d_t)(x^i_t), \quad x^i_t \in \mathcal{X}^k, \mathbf d_t \in \MeanSpace.
\end{equation} 
\end{Theorem}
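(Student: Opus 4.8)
The plan is to reduce the infinite-horizon discounted problem to the standard theory of discounted Markov decision processes (MDPs) with finite state and action spaces, exploiting the structural results already established for the finite-horizon case. The key observation is that all the ingredients of Theorem~\ref{thm:mfs-finite} carry over verbatim under the time-homogeneity hypothesis: by Lemma~\ref{lemma:fair} the admissible (fair) DSS strategies reduce to per-sub-population laws $g^k_t$, and by the Nayyar-style decomposition~\eqref{eq:psi-g}--\eqref{eq:psi-gamma} each such law factors into a global law $\boldsymbol\psi_t$ mapping the history of deep states to a local law $\boldsymbol\gamma_t \in \mathcal{G}$. Theorem~\ref{thm:finite-mfs-polynomial} (which only needs Assumption~\ref{assumption:exchangeable} and is independent of the global laws by Remark~\ref{remark:control_law_independence}) then shows that $\mathbf d_{1:\infty}$ is a controlled Markov chain on the \emph{finite} state space $\MeanSpace$ with control action $\boldsymbol\gamma_t \in \mathcal{G}$ (also a finite set) and transition kernel~\eqref{eq:mftpm}; and by~\eqref{eq:hat-l-def} together with~\eqref{eq:proof-joint-mean-field-1} the per-step cost equals $\LS(\mathbf d_t,\boldsymbol\gamma_t)$, which is uniformly bounded since $c$ is. Hence Problem~\ref{prob:MFS-finite} in the infinite-horizon discounted setting is \emph{exactly} a discounted MDP $(\MeanSpace,\mathcal{G},\Prob{\cdot\mid\cdot,\cdot},\LS,\beta)$.

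The steps I would carry out, in order, are: (i) restate the finite-horizon reduction of Section~\ref{sec:main-result-problem-finite} and note that time-homogeneity of the dynamics and cost makes the kernel~\eqref{eq:mftpm} and the cost $\LS$ independent of $t$; (ii) invoke Theorem~\ref{thm:finite-mfs-polynomial} to conclude that, under any fair DSS strategy, $\{\mathbf d_t\}$ is a time-homogeneous controlled Markov chain adapted to the common information $\mathbf d_{1:t}$, with action $\boldsymbol\gamma_t$ chosen by the global law $\boldsymbol\psi_t$; (iii) appeal to the common-information/standard-form argument as in the proof of Theorem~\ref{thm:mfs-finite} to show that $\mathbf d_t$ is an information state, so that searching over fair DSS strategies is equivalent to searching over Markov policies $\boldsymbol\psi:\MeanSpace\to\mathcal{G}$; (iv) apply the classical discounted-MDP results (e.g.~\cite{kumar2015stochastic}): since $\MeanSpace$ and $\mathcal{G}$ are finite, $\LS$ is bounded and nonnegative, and $\beta\in(0,1)$, the Bellman operator on the right-hand side of~\eqref{eq:thm-dp-mfs-finie-discounted} is a contraction with modulus $\beta$ on the space of bounded functions on $\MeanSpace$, so it has a unique fixed point $\VM$, the value iteration converges, and any selector $\boldsymbol\psi^*(\mathbf d)$ attaining the minimum (which exists because $\mathcal{G}$ is finite) yields a stationary optimal policy; (v) translate this policy back into the per-agent control law $g^{\ast,k}(x^i_t,\mathbf d_t)=\psi^{*,k}(\mathbf d_t)(x^i_t)$ via the factorization~\eqref{eq:gamma-u}, which is fair by construction.

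The main obstacle — really the only non-routine point — is step~(iii): justifying that restricting to \emph{stationary} policies measurable with respect to the current deep state $\mathbf d_t$ (rather than the full history $\mathbf d_{1:t}$, or arbitrary fair strategies with agent-level information $\{x^i_{1:t},\mathbf u_{1:t-1}\}$) incurs no loss of optimality in the infinite-horizon setting. In the finite-horizon case this is handled by backward induction (Theorem~\ref{thm:mfs-finite}); here one must instead argue, as is standard for discounted MDPs with bounded cost, that the optimal value over all history-dependent policies coincides with the fixed point of the Bellman operator, using the contraction property to pass from the finite-horizon value functions $\VM_t$ (truncated at horizon $T$) to their limit as $T\to\infty$, with the truncation error bounded by $\beta^T \sup_{\mathbf d}\VM(\mathbf d)/(1-\beta)\to 0$. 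Once this limiting argument is in place, uniqueness of the fixed point and existence of a stationary optimal selector are immediate from finiteness of $\mathcal{G}$, and the proof is complete.
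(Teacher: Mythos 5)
Your proposal is correct and follows essentially the same route as the paper: the paper likewise reduces the problem to a time-homogeneous discounted MDP on the finite deep-state space $\MeanSpace$ with action set $\mathcal{G}$, rescales the finite-horizon value functions of Theorem~\ref{thm:mfs-finite} into value-iteration iterates via the change of variable $W^d_t(\mathbf d)=\beta^{-T+t-1}\VM_{T-t+2}(\mathbf d)$, and invokes contractivity of the Bellman operator to pass to the limit $\VM$. Your explicit treatment of the passage from history-dependent fair strategies to stationary deep-state policies (your step~(iii), with the $\beta^T$ truncation bound) is a point the paper delegates to the standard reference, but it is the same argument.
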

 \emph{Proof:} Consider  the dynamic program of Theorem~\ref{thm:mfs-finite} for any finite horizon~$T$. From \cite{kumar2015stochastic}, make a change of variable  for  any $\mathbf d \in \MeanSpace$ and  $\NT$ such that 
\begin{align}\label{eq:def-W-infinite}
W^d_t(\mathbf d)&:=\beta^{-T+t-1} \VM_{T-t+2}(\mathbf d),
\end{align}
where
$W^d_1(\mathbf d):=\beta^{-T} \VM_{T+1}(\mathbf d)=0.
$
By simple algebraic manipulations  and setting $t=1$, we arrive at:
$W^d_{T+1}(\mathbf d)=\min_{\boldsymbol \gamma \in \mathcal{G}} ( \LS (\mathbf d,\boldsymbol \gamma) + \beta \Exp{W^d_{T}(\bar f (\mathbf d, \boldsymbol \gamma,\underline{\mathbf w})) } )$.
Since the above Bellman operator is contractive~\cite{kumar2015stochastic},   we have
\begin{equation}\label{eq:V-infinite-mfs}
\lim_{T \rightarrow \infty} W^d_{T} =  W^d_{\infty}=:\VM.
\end{equation}
\begin{Remark}
\emph{Although the computational complexity of finding the solution of  Bellman equation~\eqref{eq:thm-dp-mfs-finie-discounted} is polynomial  with respect to the number of agents in each sub-population, it  is  exponential in time, in general.  However,   one can find an $\varepsilon$-optimal solution  in polynomial time by using the value iteration method,  where $\varepsilon$ converges  to zero exponentially~\cite{kumar2015stochastic}.}
\end{Remark}
For Problem~\ref{prob:PMFS-finite} with  the infinite-horizon discounted cost, we impose the following assumption on the model.
\begin{Assumption}\label{assumption: finite-betak_infinite-horizon}
It is  assumed that $\beta \Lip{3}< 1$,   where $\Lip{3}$ is  the Lipschitz constant  in Lemma~\ref{lemma:lipschitz-hat-f-hat-l}.
\end{Assumption}
When the dynamics of agents are decoupled, Assumption~\ref{assumption: finite-betak_infinite-horizon}  holds because $ \Lip{3}=1$ satisfies Lemma~\ref{lemma:lipschitz-hat-f-hat-l}. This is an immediate consequence of equation~\eqref{eq:hat-f-def} and the fact that the probability of any event is upper-bounded by  $1$.
\begin{Theorem}\label{thm:pmfs-finite-discounted}
Let Assumptions~\ref{assumption:iid}, \ref{assumption:iid-x},~\ref{assumption:decoupled},~\ref{assumption:Lipschitz-finite} and \ref{assumption: finite-betak_infinite-horizon} hold. Then,  an $\varepsilon(n)$-optimal solution for  Problem~\ref{prob:PMFS-finite} with  the infinite-horizon discounted cost  function is  identified by the following Bellman equation  for any $\mathbf p \in \DeltaS$:
\begin{equation}\label{eq:thm-dp-pmfs-finie-t-discounted}
\VP(\mathbf{p})=\min_{\boldsymbol \gamma \in \mathcal{G}} \big( \LS(\mathbf p, \boldsymbol \gamma ) + \beta \Exp{\VP(\bar f^{\mathcal{S}} (\mathbf p,\boldsymbol \gamma,\underline{\mathbf w}))}\big),
\end{equation}
where the above expectation is taken with respect to  $\underline{\mathbf{w}} \in \underline{\mathcal{W}}$. Let $\boldsymbol \psi^p(\mathbf p)=\{\psi^{p,1}(\mathbf{p}),\ldots,  \psi^{p,K}(\mathbf{p})\}$ be an argmin  of the right-hand side of \eqref{eq:thm-dp-pmfs-finie-t-discounted}. The control law of agent $i \in \mathcal{N}^k$ of sub-population $k \in \mathcal{K}$ at time $t \in \mathbb{N}_T$   is given by:
\begin{equation}\label{eq:strategy-pmfs-sub-optimal-infinite}
u^i_t=g^{p,k}(x^i_t, \mathbf p_t)=\psi^{p,k}(\mathbf p_t)(x^i_t), \quad x^i_t \in \mathcal{X}^k, \mathbf p_t \in \DeltaS,
\end{equation}
and the optimality gap is bounded as follows:
\begin{equation}\label{eq:epsilon-rate-pmfs-discounted}
| J^\beta_N(\mathbf{g}^p) - J^\beta_N(\mathbf g^\ast)|\  \leq \varepsilon(n) = \frac{\Lip{4}}{(1-\beta)(1-\beta \Lip{3})}\BigO,
\end{equation}
where $\Lip{3}$ and $ \Lip{4}$ are the Lipschitz constants in Lemma~\ref{lemma:lipschitz-hat-f-hat-l}.
\end{Theorem}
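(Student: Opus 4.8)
The plan is to combine the vanishing-horizon argument from the proof of Theorem~\ref{thm:mfs-finite-discounted} with the Lipschitz error-propagation bound of Theorem~\ref{thm:pmfs-finite}, carrying the discount factor $\beta$ through all the recursions. First I would check that the operator on the right-hand side of~\eqref{eq:thm-dp-pmfs-finie-t-discounted}, namely $h \mapsto \min_{\boldsymbol\gamma \in \mathcal{G}}\big(\LS(\cdot,\boldsymbol\gamma)+\beta\,\Exp{h(\bar f^{\mathcal{S}}(\cdot,\boldsymbol\gamma,\underline{\mathbf w}))}\big)$, is a $\beta$-contraction in the sup-norm on bounded functions over the compact set $\DeltaS$; since the per-step cost is uniformly bounded and $\mathcal{G}$ is finite, this is the standard argument of~\cite{kumar2015stochastic}, so~\eqref{eq:thm-dp-pmfs-finie-t-discounted} has a unique bounded fixed point $\VP$. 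Applying to the finite-horizon dynamic programs~\eqref{eq:thm-dp-mfs-finie-t} and~\eqref{eq:thm-dp-pmfs-finie-t} (with time-homogeneous data) the time-reversal used in the proof of Theorem~\ref{thm:mfs-finite-discounted} — equivalently, passing to the finite-horizon \emph{discounted} dynamic programs of horizon $T$ — contractivity gives that the DSS and PDSS value functions converge, as $T\to\infty$, to $\VM$ of Theorem~\ref{thm:mfs-finite-discounted} and to $\VP$ above, respectively, together with convergence of the corresponding minimizers to the stationary laws $\boldsymbol\psi^\ast$ and $\boldsymbol\psi^p$.

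Next I would re-run the proof of Theorem~\ref{thm:pmfs-finite} for the discounted finite-horizon problem, extending the finite-horizon estimate~\eqref{eq:epsilon-rate-pmfs}. The error splitting~\eqref{eq:inequality-triangle-pmfs-basic} and the two estimates~\eqref{eq:g-star-hat v-pmfs}--\eqref{eq:hat j-hat v-1-pmfs} carry over once Lemma~\ref{proposition:quantization-basic} (and its variant Lemma~\ref{proposition:quantization-basic-equality}) is applied to the discounted recursion; the only change is that the constants now solve the discount-weighted recursion
\begin{equation}
\Lip{5}_t:=\Lip{4}+\beta\,\Lip{3}\,\Lip{5}_{t+1},\qquad \Lip{6}_t:=\beta\,\Lip{5}_{t+1}+\beta\,\Lip{6}_{t+1},\qquad \Lip{5}_{T+1}=\Lip{6}_{T+1}=0,
\end{equation}
with $\Lip{3},\Lip{4}$ the time-homogeneous Lipschitz constants of Lemma~\ref{lemma:lipschitz-hat-f-hat-l} and the $\BigO$ slack supplied by Lemma~\ref{lemma:lipschitz-MF-Z}. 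Under Assumption~\ref{assumption: finite-betak_infinite-horizon} the map $x\mapsto \Lip{4}+\beta\Lip{3}x$ is a contraction, hence $\Lip{5}_1\to \Lip{4}/(1-\beta\Lip{3})$ and then $\Lip{6}_1\to \beta\Lip{4}/\big((1-\beta)(1-\beta\Lip{3})\big)$ as $T\to\infty$, so that $\Lip{5}_1+\Lip{6}_1\to \Lip{4}/\big((1-\beta)(1-\beta\Lip{3})\big)$ --- precisely the constant in~\eqref{eq:epsilon-rate-pmfs-discounted}. For each finite $T$ the truncated discounted optimality gap is therefore at most $(\Lip{5}_1+\Lip{6}_1)\BigO$ plus the discounted tail of the per-step cost beyond $T$, which vanishes as $T\to\infty$ because the per-step cost is uniformly bounded; letting $T\to\infty$ and invoking the convergence of value functions and minimizers from the first step yields~\eqref{eq:epsilon-rate-pmfs-discounted}, and the stationary control law~\eqref{eq:strategy-pmfs-sub-optimal-infinite} built from the argmin of $\VP$ attains this bound.

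The main obstacle is making this double limit rigorous. One must show that the finite-horizon bound of Theorem~\ref{thm:pmfs-finite} is \emph{uniform in} $T$ --- which is exactly what convergence of the affine iteration defining $\Lip{5}_t$ provides, and this is where Assumption~\ref{assumption: finite-betak_infinite-horizon} is indispensable (without $\beta\Lip{3}<1$ the constants $\Lip{5}_1$ can diverge with $T$) --- and, simultaneously, that $J^\beta_N(\mathbf g^p)$ and $J^\beta_N(\mathbf g^\ast)$ coincide with the limits of the corresponding finite-horizon costs; the latter follows from the $\beta$-contraction of both Bellman operators together with uniform boundedness of the per-step cost to dominate the tails. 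With the uniform bound and these limit identities in hand,~\eqref{eq:epsilon-rate-pmfs-discounted} follows by passing to the limit in the finite-horizon estimate.
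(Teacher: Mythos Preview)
Your proposal is correct and follows essentially the same route as the paper: the paper also passes to the limit in the finite-horizon bound of Theorem~\ref{thm:pmfs-finite}, obtaining exactly your discount-weighted recursions for the Lipschitz constants (written there via the change of variables $\hat H^5_t:=\beta^{-T+t-1}\Lip{5}_{T-t+2}$, $\hat H^6_t:=\beta^{-T+t-1}\Lip{6}_{T-t+2}$, which yields $\hat H^5_{T+1}=\Lip{4}+\beta\Lip{3}\hat H^5_T$ and $\hat H^6_{T+1}=\beta(\hat H^5_T+\hat H^6_T)$) and then invoking Assumption~\ref{assumption: finite-betak_infinite-horizon} to get $\hat H^5_\infty=\Lip{4}/(1-\beta\Lip{3})$ and $\hat H^6_\infty\le\beta\Lip{4}/\big((1-\beta)(1-\beta\Lip{3})\big)$. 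One small caution: you need not (and the paper does not) argue convergence of the finite-horizon \emph{minimizers} to $\boldsymbol\psi^p$; it suffices to bound $|\VM(\mathbf d_1)-\VP(\mathbf p_1)|$ by passing to the limit in~\eqref{eq:inf-pmfs-proof} and then redo the second half of the triangle inequality~\eqref{eq:inequality-triangle-pmfs-basic} directly for the stationary strategy built from the argmin of~\eqref{eq:thm-dp-pmfs-finie-t-discounted}.
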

\begin{proof}
Consider the dynamic program  of Theorem~\ref{thm:pmfs-finite} for any finite horizon $T$. Make a change of variables for any $\mathbf p \in \DeltaS$ and $\NT$  such that 
\begin{multline} \label{eq:def-hat W-infinite}
 W^p_t(\mathbf p):=\beta^{-T+t-1} \VP_{T-t+2}(\mathbf p), \\
\hat H^5_t :=\beta^{-T+t-1} \Lip{5}_{T-t+2},  \hat H^6_t :=\beta^{-T+t-1} \Lip{6}_{T-t+2},
\end{multline}
where 
$W^p_1(\mathbf p):=\beta^{-T} \VP_{T+1}(\mathbf p)=0,
\hat H^5_{1}:=\beta^{-T} \Lip{5}_{T+1}=0$ and $
\hat H^6_{1}:=\beta^{-T} \Lip{6}_{T+1}=0.
$
From Theorem~\ref{thm:pmfs-finite}, one arrives at the following relations by simple algebraic manipulations and setting $t=1$:
\begin{equation}\label{eq:infintie-1}
\begin{cases}
W^p_{T+1}(\mathbf p)=\min_{\boldsymbol \gamma \in \mathcal{G}} \big( \LS(\mathbf p, \boldsymbol \gamma) + \beta \Exp{W^p_{T}(\bar f^{\mathcal{S}} (\mathbf p, \boldsymbol \gamma,\underline{\mathbf w})) } \big),  \\
\hat H^5_{T+1}=\Lip{4}+\beta \hat H^5_T \Lip{3}=\Lip{4} \sum_{\tau=1}^{T} (\beta \Lip{3})^{\tau-1}, \\
\hat H^6_{T+1}=\beta(\hat H^5_T+\hat H^6_T) \leq  \hat H^5_T\sum_{\tau=1}^T \beta^{T-\tau+1}.
\end{cases}
\end{equation}
Since the above Bellman operator is contractive~\cite{kumar2015stochastic},  we have
\begin{equation}\label{eq:hat V-infinite}
 \lim_{T \rightarrow \infty} W^p_{T} = W^p_{\infty}=:\VP.
\end{equation}
In addition, from Lemmas~\ref{proposition:quantization-basic} and~\ref{lemma:lipschitz-MF-Z} as well as equations \eqref{eq:def-W-infinite} and \eqref{eq:def-hat W-infinite}, for any $\mathbf d \in \MeanSpace$ and  $ \mathbf p \in \DeltaS$, we obtain
\begin{multline}\label{eq:inf-pmfs-proof}
\Ninf{W^d_{T+1}(\mathbf d) -W^p_{T+1}(\mathbf p) } =  \Ninf{\VM_1(\mathbf d) - \VP_1(\mathbf p) } \leq   \\
  H^5_1 \Ninf{\mathbf d - \mathbf p} +  H^6_1 \BigO = \hat H^5_{T+1}\Ninf{\mathbf d - \mathbf p} + \hat H^6_{T+1} \BigO.
\end{multline}
Therefore, from equations \eqref{eq:V-infinite-mfs}, \eqref{eq:hat V-infinite} and \eqref{eq:inf-pmfs-proof}, it results that
\begin{multline}\label{eq:infinite-2}
\Ninf{ W^d_\infty(\mathbf m_1) -W^p_\infty(\mathbf p_1) }= \Ninf{\VM(\mathbf d_1) -\VP(\mathbf p_1)} \\ \leq \hat H^5_\infty\Ninf{\mathbf d_1- \mathbf p_1} + \hat H^6_\infty \BigO,
\end{multline}
where from  Assumption~\ref{assumption: finite-betak_infinite-horizon} and equation \eqref{eq:infintie-1}:
$\hat H^5_\infty=\frac{\Lip{4}}{1-\beta \Lip{3}}$ and $\hat H^6_\infty \leq \frac{\beta \Lip{4}}{(1- \beta)(1-\beta \Lip{3})}$.
The rest of the proof follows along the same  lines as the proof of Theorem~\ref{thm:pmfs-finite}, starting from the triangle inequality, where inequality $(c)$ in \eqref{eq:g-star-hat v-pmfs} and inequality $(e)$ in \eqref{eq:hat j-hat v-1-pmfs} are replaced by inequality \eqref{eq:infinite-2}. 
\end{proof}
\begin{Remark}\label{remark:infinite-quantization}
\emph{Since $ \DeltaS$  is a polish space, there always exists a minimizer $\boldsymbol \psi^p(\mathbf p)$  for \eqref{eq:thm-dp-pmfs-finie-t-discounted}.  Under Assumption~\ref{assumption: finite-betak_infinite-horizon}, the dynamic programs in Theorems~\ref{thm:mfs-quantized} and \ref{thm:pmfs-quantized}  can be extended  to  the infinite-horizon case  by replacing  $(\Lip{5}_1+ \Lip{6}_1)$ with $\frac{\Lip{4}}{(1-\beta)(1-\beta \Lip{3})}$.}
\end{Remark}


\begin{Remark}\label{remark:infinite}
\emph{Although strategy $\mathbf g^p$ in Theorem~\ref{thm:pmfs-finite-discounted}   is stationary with respect to $(x^i_t,\mathbf p_t)$, it is not stationary with respect to $x^i_t$, $\NT$. This implies  that the assumption of stationary strategy in~\cite{tembine2009mean} is rather restrictive. 
}
\end{Remark}

\section{Arbitrarily asymmetric cost function}\label{sec:arbitrary}
Let $c_t(\mathbf x_t,\mathbf u_t):\mathcal{X} \times \mathcal{U} \rightarrow \mathbb{R}_{\geq 0}$ be any arbitrarily-coupled (asymmetric) per-step cost function at time $\NT$. 
\begin{Assumption}\label{assumption:exchangeable-x}
For  any sub-population $k \in \mathcal{K}$,  initial states $(x^i_1)_{i \in \mathcal{N}^k}$   are exchangeable.
\end{Assumption}
\begin{Proposition}\label{propos:arbitrary coupled}
Let Assumptions~\ref{assumption:exchangeable}  and~\ref{assumption:exchangeable-x} hold.  
When attention is restricted to  fair  strategies, there exists a function $\hat \ell_t:\MeanSpace \times \mathcal{G} \rightarrow \mathbb{R}$, independent of the global laws $\boldsymbol {\psi}_{1:t}$,  such that: $ \hat \ell_t(\mathbf d_t,\boldsymbol \gamma_t):=\Exp{c_t(\mathbf x_t,\mathbf u_t)\mid \mathbf d_{1:t},\boldsymbol \gamma_{1:t}}=\sum_{\mathbf x \in \mathcal{X}} \sum_{\mathbf u \in \mathcal{U}}$ $ \prod_{k \in \mathcal{K} } \prod_{i \in \mathcal{N}^k} \ID{u^i=\gamma^k_t(x^i)} c_t(\mathbf x,\mathbf u)  \Prob{\mathbf x_t=\mathbf x\mid \mathbf d_{1:t},\boldsymbol \gamma_{1:t}}.
$ 
\end{Proposition}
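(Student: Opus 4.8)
The plan is to first use fairness to replace the joint action by a deterministic image of the joint state, and then to identify the conditional law of the joint state given the whole deep-state history as a function of the last deep state only.

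First I would apply Lemma~\ref{lemma:fair} and equations~\eqref{eq:mfs-is-fair}--\eqref{eq:psi-gamma}: under a fair strategy each agent $i\in\mathcal{N}^k$ plays $u^i_t=\gamma^k_t(x^i_t)$, and $\boldsymbol\gamma_t$ is a deterministic function of $\mathbf d_{1:t}$ through the global law $\boldsymbol\psi_t$. Hence, on the $\sigma$-algebra generated by $(\mathbf d_{1:t},\boldsymbol\gamma_{1:t})$, the event $\{\mathbf u_t=\mathbf u\}$ coincides with $\bigcap_{k\in\mathcal{K}}\bigcap_{i\in\mathcal{N}^k}\{u^i=\gamma^k_t(x^i_t)\}$. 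Writing the finite conditional expectation $\Exp{c_t(\mathbf x_t,\mathbf u_t)\mid \mathbf d_{1:t},\boldsymbol\gamma_{1:t}}$ as a sum over $(\mathbf x,\mathbf u)\in\mathcal{X}\times\mathcal{U}$ and pulling out the $\boldsymbol\gamma_t$-measurable indicator product $\prod_{k\in\mathcal{K}}\prod_{i\in\mathcal{N}^k}\ID{u^i=\gamma^k_t(x^i)}$ then produces exactly the identity in the statement, with the only remaining stochastic quantity being $\Prob{\mathbf x_t=\mathbf x\mid \mathbf d_{1:t},\boldsymbol\gamma_{1:t}}$. Since $\boldsymbol\gamma_{1:t}$ is determined by $\mathbf d_{1:t}$, this equals $\Prob{\mathbf x_t=\mathbf x\mid \mathbf d_{1:t}}$, so the whole task reduces to showing that this conditional law depends on the history only through $\mathbf d_t$ (and in particular not through $\boldsymbol\psi_{1:t}$).

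For that — the heart of the argument — the plan is to run a forward induction on $t$, following~\cite{JalalCDC2014} and in the spirit of the computation in the proof of Theorem~\ref{thm:finite-mfs-polynomial}, to show that under Assumptions~\ref{assumption:exchangeable} and~\ref{assumption:exchangeable-x} the conditional law $\Prob{\mathbf x_t=\mathbf x\mid \mathbf d_{1:t}}$ is \emph{partially exchangeable}, i.e.\ invariant under permuting agent indices within any single sub-population. The base case is Assumption~\ref{assumption:exchangeable-x}. In the induction step, conditionally on $\mathbf d_{1:t}$ the transition $\mathbf x_t\mapsto\mathbf x_{t+1}$ acts coordinatewise within each sub-population through the common map $x\mapsto f^k_t(x,\gamma^k_t(x),\phi(\mathbf d_t,\boldsymbol\gamma_t),\cdot)$, which commutes with within-sub-population permutations, applied to the pair family $(x^i_t,w^i_t)_{i\in\mathcal{N}^k}$; this family is within-sub-population exchangeable by the inductive hypothesis together with Assumption~\ref{assumption:exchangeable} and the fact that $\mathbf w_t$ is independent of $(\mathbf x_t,\mathbf d_{1:t})$ (a consequence of the mutual independence of $\mathbf x_1,\mathbf w_1,\dots,\mathbf w_T$), so $\mathbf x_{t+1}$ stays within-sub-population exchangeable, and conditioning further on the symmetric statistic $\mathbf d_{t+1}=\xi(\mathbf x_{t+1})$ keeps it so. By definition of $\mathbf d_t$ this conditional law is supported on $\{\mathbf x:\xi((x^i)_{i\in\mathcal{N}^k})=d^k_t\ \forall k\in\mathcal{K}\}$, which is a single orbit of the product of symmetric groups $\prod_{k\in\mathcal{K}}S_{\mathcal{N}^k}$, and a partially exchangeable probability law supported on one orbit must be the uniform law on it; hence $\Prob{\mathbf x_t=\mathbf x\mid \mathbf d_{1:t}}$ is that uniform distribution, which is a function of $\mathbf d_t$ alone. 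Substituting this into the identity obtained above exhibits $\hat\ell_t$ as a well-defined function of $(\mathbf d_t,\boldsymbol\gamma_t)$ that does not depend on $\boldsymbol\psi_{1:t}$, as claimed.

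The main obstacle is the exchangeability-preservation step: proving that conditioning the joint state on the full deep-state trajectory is equivalent to conditioning only on the most recent deep state. This is exactly the forward-induction result of~\cite{JalalCDC2014}, which is available here precisely because Assumption~\ref{assumption:exchangeable-x} reinstates the exchangeable-initial-states hypothesis that was dropped in the general model of Section~\ref{sec:main-result-problem-finite}; with that in hand the remainder is routine bookkeeping with conditional expectations and indicator functions.
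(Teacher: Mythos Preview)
Your proposal is correct and follows essentially the same route as the paper: the paper's proof simply invokes the forward induction of~\cite[Lemma 2]{JalalCDC2014} to conclude that $\Prob{\mathbf x_t=\mathbf x\mid \mathbf d_{1:t},\boldsymbol\gamma_{1:t}}$ is partially exchangeable and hence representable by $\mathbf d_t$, which is exactly the core of your argument. Your single-orbit/uniform-law observation is a clean way of making explicit why partial exchangeability forces the conditional law to depend only on $\mathbf d_t$ (and hence not on $\boldsymbol\psi_{1:t}$), a step the paper leaves implicit.
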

\begin{proof}
The proof follows from  a forward  induction proposed in~\cite[Lemma 2]{JalalCDC2014} which shows that $\Prob{\mathbf x_t=\mathbf x\mid \mathbf d_{1:t},\boldsymbol \gamma_{1:t}}$   is partially  exchangeable, i.e.,  it is representable by~$\mathbf d_t$. 
\end{proof}
According to Proposition~\ref{propos:arbitrary coupled},  dynamic programs  proposed in Sections~\ref{sec:main-result-problem-finite}--\ref{sec:infinite-horizon} extend naturally to  any arbitrary asymmetric  cost function.  Note that the complexity of computing  $\hat \ell_t(\mathbf d_t,\boldsymbol \gamma_t)$ in time  is exponential with respect to the number of agents. However, this computation can be carried out off-line  by machine learning methods or circumvented by reinforcement learning techniques~\cite{JalalACC2015,Jalal2019TSNE}. In general, the exploration space of  an arbitrary asymmetric cost function  grows exponentially with  the number of agents while that  of its  deep state projection grows polynomially, according to Proposition~\ref{propos:arbitrary coupled}, which is a considerable reduction in complexity.
 
\section{Major-minor setup: a special case}\label{sec:MM}
Consider a special case  where a sub-population has only  one agent. In mean-field game theory,  this  case is known as  \emph{major-minor mean-field game} which  was first  introduced  in~\cite{Huang2010large}. The sub-population with one agent is referred to as the \emph{major}  player because it  can directly influence  other  players, called \emph{minor} players,  through its local state whereas  other players can only influence the major player through their collective behaviour (mean-field).  In this type of scenario, the classical mean-field game approach  is not directly applicable   as the mean-field of minor players  is no longer deterministic (hence, unpredictable)  due to  the randomness of the major player's state~\cite{Nourian2013MM}. A similar setup  may  be considered in deep   teams with the  distinction that  no  additional complication is introduced as the deep team solution is not in the form of coupled forward-backward equations, i.e., it is independent of the future trajectory of the deep state. 
To connect our results to  major-minor setup,  consider a  sub-population $k \in \mathcal{K}$  consisting of  one agent, i.e., $|\mathcal{N}^k|=1$.  Since  the  local state  and  the deep state of this agent are  identical, the split of strategy  $\mathbf g^p$ in~\eqref{eq:psi-gamma-pmfs} can be done  slightly different as follows:
 $
u^i_t=\psi^{p,k}_t((d^{p,k}_{1:t})_{k \in \mathcal{S}})=g^{p,k}_t((d^{p,k}_{1:t})_{k \in \mathcal{S}})
$. 
In  this case,  the local  law of major agent  $\gamma^k_t: \mathcal{X}^k \rightarrow \mathcal{U}^k$ (that takes $|\mathcal{U}^k|^{|\mathcal{X}^k|}$ values)  simplifies to  its local control action  $u^i_t \in \mathcal{U}^k, i \in \mathcal{N}^k,$ (that takes $|\mathcal{U}^k|$ values).

\section{Numerical Example}\label{sec:numerical}
\textbf{Example 1.} Consider a company that provides a particular service  (e.g., internet, electricity or cellular phone) for $n \in \mathbb{N}$ users.  Assume that each  user $i \in \mathbb{N}_n$ at time $t \in \mathbb{N}$ makes an independent  request with probability $\mu \in (0,1)$ to receive  service.  For simplicity, it is also assumed that   each user is not allowed  to make a new request until its current request is  served.  Therefore,  the state of user $i$ at time~$t$ is binary, i.e., $x^i_t \in \mathcal{X}:=\{0,1\}$, where $x^i_t=1$  means that user $i$ has a request at time $t$, and $x^i_t=0$ means that it does not have a request at that time. The initial states of users are distributed identically and independently with respect to  the probability mass function $\mathbb{P}_X$. Let $d_t \in \mathcal{E}:=\{0, \frac{1}{n},\frac{2}{n},\ldots,1\}$ denote the empirical distribution of the requests at time $t$, i.e.,
$d_t=\frac{1}{n}\sum_{i=1}^n \ID{x^i_t=1}.$
 To serve its users, the company  has  $h \in \mathbb{N}$  options which depend on different  contracts and resources.
Define the following  terms for every $u \in \mathcal{U}
=\{1,\ldots,h\}$:
\begin{itemize}
\item  \textbf{Participation  rate:}  This is the probability according to which  a user is incentivized to delay its request.   The company may use different contracts and price profiles to incentivize users to postpone their requests.
For example in smart grids,  the independent service operator may offer discounts to motivate users to delay their demands during peak-load time.
 Denote this probability by $\alpha(u) \in [0,1]$ and assume that it  is independent of the request probability; hence, the probability of receiving a request from a participant user is $(1- \alpha(u)) \mu \leq \mu.$ 

\item  \textbf{Service rate:} This is the probability according to which a request  is  served at each time instant.  The company may use various suppliers (resources) to provide service; hence, the service rate  may not be the same for different suppliers.  Denote  this probability  by  $q(u)  \in (0,1]$. 

\item  \textbf{Base price:} This is the price of serving a user  when  it has  no request (i.e., $x=0$). This price is  used  for maintaining resources such as data storage and communications. Denote this price  by $c_B(u,1-d_t)\in \mathbb{R}_{\geq 0}$.

\item  \textbf{Service  price:}  This is the price of serving  a user when it has a request (i.e., $x=1$).  This  includes the price of service as well as the price of  maintenance (but may not be a simple sum of the two). Denote this price  by $c_S(u,d_t)\in \mathbb{R}_{\geq 0}$.
%
\end{itemize}
Let  $u^i_t \in \mathcal{U}$ denote the option assigned to user $i \in \mathbb{N}_n$  at time~$t$.  Then, the transition probability matrix of user $i$ under option $u^i_t$ at time $t$ can be written as follows:
\begin{equation}
\Prob{x^i_{t+1}\mid x^i_t, u^i_t}=\left[
\begin{array}{cc}
1- (1-\alpha(u^i_t)) \mu & (1-\alpha(u^i_t))\mu\\
q(u^i_t) & 1-q(u^i_t)
\end{array}
\right].
\end{equation}

Alternatively,  one can express the above-mentioned  dynamics  in the form of~\eqref{eq:dynamics-f-mean-field}, i.e., 
$x^i_{t+1}=(1-x^i_t) w^i(u^i_t) + x^i_t  \hat w^i_t(u^i_t)$,
where $w^i(u^i_t) , \hat w^i(u^i_t)  \in \{0,1\}$ are the underlying Bernoulli random variables  associated with  the rates of request, participation, and service such that:
$\Prob{w^i(u^i_t)=1}= (1-\alpha(u^i_t)) \mu$ and $ \Prob{\hat w^i(u^i_t)=1}=1-q(u^i_t).$
In addition,  the cost of serving user~$i$   at time~$t$ is given by:
$c(x^i_t,u^i_t,d_t)= (1-x^i_t)c_{B}(u^i_t,1-d_t)+x^i_t c_S(u^i_t, d_t)$.
\begin{figure}[b!]
\centering
\vspace{-0.6cm}
\scalebox{1}{
\includegraphics[trim={0cm 3cm 0cm 3cm},clip, width=\linewidth]{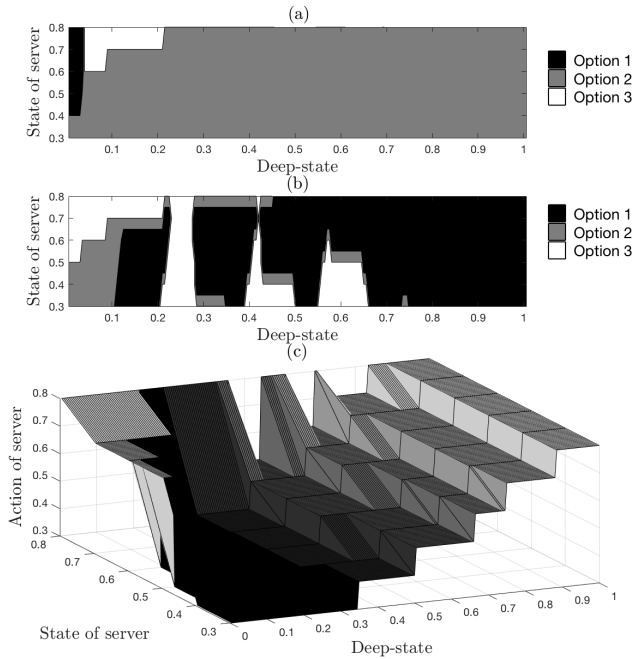}}
\caption{The optimal strategy  for  serving the users as well as  adjusting the nominal capacity in Example 1. (a) The optimal option  for  a user with no request  ($x=0$); (b) the optimal option for a user with request  ($x=1$), and $(c)$  The optimal nominal capacity for the server.}\label{fig1}
\end{figure}
\begin{figure}[t!]
\centering
\scalebox{.8}{
\includegraphics[trim={0cm 9.5cm 0 9cm},clip, width=\linewidth]{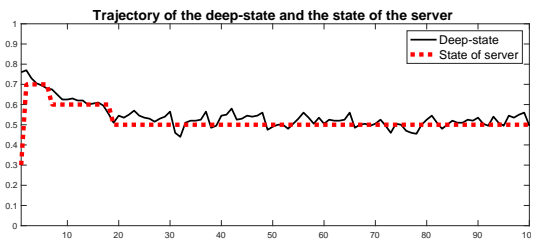}}
\caption{A trajectory  of the deep state (empirical distribution of requests) and  the state of the server (nominal capacity).}\label{fig:trajectory}
\end{figure}
 It is desirable  for the  company  to adapt its   nominal capacity with the empirical distribution of requests.  Let $x^0_t \in \mathcal{X}^0 $ denote the nominal capacity of    a server located in the  company at time $t \in \mathbb{N}$, where $\mathcal{X}^0$ is a finite set  with  values in $[0,1]$. At each time instant,  the  server  must decide    the next nominal capacity. Let   $u^0_t \in \mathcal{U}^0=:\mathcal{X}^0$ denote the action of  the  server at time $t \in \mathbb{N}$. Subsequently, the state  dynamics of the   server  evolves as
$x^0_{t+1}= x^0_tw^0_t+u^0_t (1-w^0_t)$,
where  $\{w^0_t \in \{0,1\}, t\in \mathbb{N}\}$ is a Bernoulli process that  captures  the  probability of failure, i.e., $w^0_t=1$ implies that there is a fault at time $t$,  and  $w^0_t=0$ implies that there is no fault.   Denote by  $p_{w^0} \in [0,1]$  the  probability according to which a  fault may occur.  When a fault happens,  no capacity is patched or dispatched.   Define $\ell: \mathcal{X}^0 \times \mathcal{U}^0 \rightarrow \mathbb{R}_{\geq 0}$ as  the cost function of  the server such that
$\ell(x^0_t,u^0_t):=\ell_C(x^0_t)+  \ell_P |u^0_t -x^0_t|,t \in \mathbb{N}$,
where $\ell_C(x^0_t) \in \mathbb{R}_{\geq 0 }$ is the price of capacity $x^0_t \in \mathcal{X}^0$  and $\ell_P |u^0_t -x^0_t|$, $\ell_P \in \mathbb{R}_{\geq 0}$,  is the cost  of patching and dispatching capacities $x^0_t$ and $u^0_t$. 

 The  objective of the company  is to manage  the users (consumption side)  as well  as the server (generation side) in such a way  that  the empirical distribution of requests is   close to the  nominal capacity  while incurring  the lowest possible  price.  More precisely, given $\beta \in (0,1)$ and $\lambda \in \mathbb{R}_{\geq 0}$,   it is desired to minimize the  following cost
\begin{equation}\label{eq:example-cost}
\mathbb{E}\big[\sum_{t=1}^\infty \beta^{t-1} \big(\frac{1}{n} \sum_{i=1}^nc(x^i_t,u^i_t,d_t) +\ell(x^0_t,u^0_t) + \lambda ( d_t - x^0_t)^2  \big)\big],
\end{equation}
where the first term  is the average price of users,  the second term is   the cost of the server, and the third  term is the penalty for  the empirical distribution of  requests   deviating from the  nominal capacity. Since  the state space $\mathcal{X} $ is binary,   the empirical distribution of requests $d_t$   is a  sufficient statistic to identify the deep state, i.e.,  $(1-d_t,d_t)$,  $t \in \mathbb{N}$.  Hence, to ease the presentation,  $d_t$ is  referred to as the deep state hereafter.

\begin{figure}[t!]
\centering
\scalebox{.8}{
\includegraphics[trim={0cm 9.5cm 0 9cm},clip, width=\linewidth]{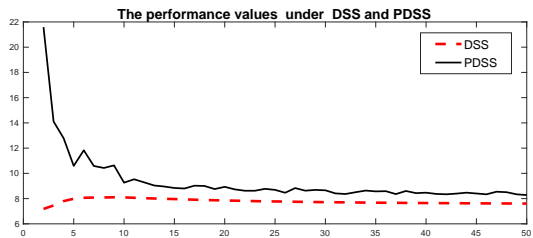}}
\caption{The convergence of the PDSS solution $J(g^{pq})$ with $n$ equidistant  quantization levels to the DSS solution $J(g^\ast)=\Exp{\VM(d_1,x^0_1)}$ as the number of users $n$ increases, according to  Theorem~\ref{thm:pmfs-quantized} and Remark~\ref{remark:infinite-quantization}. }\label{fig:MFS_PMFS}
\end{figure}
To illustrate the results, suppose the company has $3$ options ($h=3$). The first option, denoted by $u=1$,   is to provide service at a flat rate.  The second option, denoted by $u=2$, is to give $20\%$ discount  to users in order to incentivize them to  cooperate with the company.\footnote{The monetary discount is considered to  be an extra cost  for the company.}  In such  a case,  users are motivated to delay their requests (with a probability directly dependent on the participation rate)  and  let  the company  serve  their existing  requests  at a lower rate. The third option, denoted by $u=3$, is that the company purchases the desired service from another service company   at a variable rate.  In addition,  the  following  parameters are chosen for  the simulations:
\begin{align*}
&n=200, \quad \beta=0.8,\quad \mu=0.8, \quad q=[0.1, 0.05, 0.2],\\
 &\alpha=[0,0.85, 0],   \quad c_B(1,\boldsymbol \cdot)=0.59, \quad c_S(1,\boldsymbol \cdot)=0.65,\\
&c_B(2,\boldsymbol \cdot)=(1+0.2) c_B(1,\boldsymbol \cdot), \quad c_S(2,\boldsymbol \cdot)=(1+0.2)c_S(1,\boldsymbol \cdot), \\
 &c_B(3,1-d)=0.3(1+1-d),\quad  c_S(3,d)=0.5(1+d), \\
&\lambda=15, \quad \ell_P=0.5,\quad  \mathcal{X}^0=\mathcal{U}^0=\{0.3, 0.4, 0.5, 0.6, 0.7, 0.8\}\\
&\ell_C=[0.02,0.04,0.07,0.12,0.15,0.2],\quad p_{w^0}=0.05.
\end{align*}

  When there is no request  from user $i \in \mathbb{N}_n$ at time $t \in \mathbb{N}$,  the cheapest option for  the user  is chosen from   Figure~\ref{fig1}.a at that time, and  when there is a request  from that user,   its  best affordable  option   is selected from  Figure~\ref{fig1}.b.     In addition, the  nominal capacity of the server   is given by Figure~\ref{fig1}.c based on the current capacity and the deep state.  Figure~\ref{fig:trajectory} demonstrates  the  trajectory of  the deep state and  nominal capacity.
 In Figure~\ref{fig:MFS_PMFS},  the optimality gap between DSS and PDSS solutions with respect to the number of users can be observed, where the initial state of the server is $0.3$ and the initial states of users  are identically and independently distributed  according to the  probability distribution $[0.2, 0.8]$.


\section{Conclusions}\label{sec:conclusion}
In this paper,  we  introduced deep structured  teams  and proposed novel dynamic programs  to  find  optimal and sub-optimal strategies for agents in a team,  under  two non-classical information structures, namely, deep-state sharing and partial deep-state sharing. In addition, we  developed some  results to alleviate the computational complexity of the   proposed solutions when the population is medium or large.  We then extended our results to the  infinite-horizon discounted cost as well as arbitrarily coupled cost, and demonstrated  their effectiveness by a numerical example in service management.

In practice, agents often have limited  computation and communication  resources. These practical  limitations are the root causes of many challenges in team theory. We showed that in deep  teams,  the number of computational resources increases polynomially (rather than exponentially) with the number of agents in each sub-population. The size (length) of the shared information (i.e., deep state) among agents, on the other hand,  is independent of the number of agents in each sub-population.
Moreover,  agents may decide not to communicate their states  at all, in which case  the error of such compromise  was shown to be bounded by $\mathcal{O}(1/\sqrt{n})$ and hence goes to zero as the number of agents $n$ increases. Furthermore,  agents may  use a quantized solution whose computational complexity in space is independent of the  number of  agents. We showed that the error of such compromise is bounded by $\mathcal{O}(1/r)$, where  the error tends to zero as the number of quantization levels~$r$ increases. 
 
The main results of this paper can  naturally be generalized to randomized strategies and partially observable deep states by simply replacing the action space  and deep state with the space of  probability measures and  belief deep state, respectively. 
 To further enhance the computational complexity
of a deep team, one can use various approximation methods such as deep reinforcement learning. In such a case, the resultant design may be viewed as \emph{doubly deep} in the number of decision makers (that represent parallel computation) as well as the number of hidden layers (that represent sequential computation).

\bibliographystyle{IEEEtran}
\bibliography{Jalal_Ref}


\appendices

\section{Proof of Corollary~\ref{remark:computational-space-time}}\label{proof:remark:computational-space-time}
To numerically solve the dynamic program~\eqref{eq:thm-dp-mfs-finie-t}, at each step $\NT$,  it is required to store $|\MeanSpace|^2 |\mathcal{G}|$ entries corresponding to  the transition probability matrix of deep state \eqref{eq:mftpm}, and to run  $|\underline{\mathcal{W}}|$ iterations to compute each entry.  Each entry occupies  a certain amount of space and each iteration takes a certain amount of  time. In addition,   to solve the minimization problem  in \eqref{eq:thm-dp-mfs-finie-t}, $|\MeanSpace| |\mathcal{G}|$ iterations are  needed for searching over all possible cases. On the other hand, from~\eqref{eq:mean-field-cardinality}, $|\MeanSpace|$ and $|\underline{\mathcal{W}}|$ are bounded by 
$\prod_{k \in \mathcal{K}}(|\mathcal{N}^k|+1)^{|\mathcal{X}^k|}$ and  $\prod_{k \in \mathcal{K}}(|\mathcal{N}^k|+1)^{|\mathcal{W}^k|}$, respectively. Furthermore,  $|\mathcal{G}|=\prod_{k \in \mathcal{K}} |\mathcal{U}^k|^{|\mathcal{X}^k|}$ is finite and independent of the number of agents in each sub-population as well as  control horizon~$T$,  which completes to  the result.

\section{Proof or Proposition~\ref{prop:adaptable-mean-field-pmfs}}\label{sec:proof_prop:adaptable-mean-field-pmfs}
The proof proceeds by induction. According to equation \eqref{eq:aux-dynamics-2}, Proposition~\ref{prop:adaptable-mean-field-pmfs} holds at initial time $t=1$.
Suppose $\mathbf{p}_{1:t}$  is adapted to the filtration $(d^{p,k}_{1:t})_{k \in \mathcal{S}}$ at time $t$.  From Assumption~\ref{assumption:decoupled} and  equation  \eqref{eq:pmfs-assumption-decoupled}, the following equality holds for any $\mathbf d \in \MeanSpace$, $\mathbf p \in \DeltaS$ and $\boldsymbol \gamma \in \mathcal{G}$ at any time $\NT$, 
\begin{align}\label{eq:prop:proof_1}
\bar f^k_t((d^k)_{k \in \mathcal{S}},\boldsymbol  \gamma,\MW)&=\bar f^k_t(\mathbf d,\boldsymbol  \gamma,\MW), \quad k \in \mathcal{S}, \nonumber \\
\bar f^k_t((p^k)_{k \in \mathcal{S}},\boldsymbol  \gamma,\MW)&=\bar f^k_t(\mathbf p,\boldsymbol  \gamma,\MW), \quad k \in \mathcal{S}.
\end{align}
Consequently, processes $d^{p,k}_{1:t+1}$ and  $p^k_{1:t+1}$,  $k \in \mathcal{S}$,   start from identical initial values  and evolve  identically  in time  almost surely under identical local  laws and noise processes, according to  equations \eqref{eq:definition-bar f-hat-m-K},  \eqref{eq:aux-dynamics-2}, \eqref{eq:aux-dynamics-1}, \eqref{eq:prop:proof_1}, and  Assumption~\ref{assumption:decoupled}.  Thus, $p^k_{t+1}  \substack{a.s. \\=}  d^{p,k}_{t+1}$.
 For  any  $k \in \mathcal{S}^c$,  $p^k_{t+1}$ evolves deterministically   according to \eqref{eq:aux-dynamics-1},   where  $\mathbf p_t$ and  $\boldsymbol \gamma^p_t$ are adapted to $(d^{p,k}_{1:t})_{k \in \mathcal{S}}$ from the induction assumption and  \eqref{eq:psi-gamma-pmfs}, respectively.
  Therefore, $\mathbf{p}_{1:t+1}$  is adapted to  $(d^{p,k}_{1:t+1})_{k \in \mathcal{S}}$.
\section{Proof of Lemma~\ref{lemma:lipschitz-hat-f-hat-l}}\label{sec:proof_lemma:lipschitz-hat-f-hat-l}
It is well known that any linear combination and  product of Lipschitz functions is  also a Lipschitz function. According to~\eqref{eq:phi-def-function}, for any $ \boldsymbol \gamma  \in \mathcal{G}$, function $\phi(\mathbf z, \boldsymbol \gamma)$ is Lipschitz in~$\mathbf z$. Consequently, for any $y,x \in \mathcal{X}^k$ and any  $\boldsymbol \gamma \in \mathcal{G}$, the transition probability $\ProbK{y |x,\gamma^k(x),\phi(\mathbf z, \boldsymbol \gamma)}$ is Lipschitz  in~$\mathbf z$ due to Assumption~\ref{assumption:Lipschitz-finite}. Hence, function $\FS^k_t(\mathbf z, \boldsymbol \gamma)$, given by  \eqref{eq:hat-f-def}, is Lipschitz  in~$\mathbf z$ because  it is  a linear combination  of  Lipschitz functions (and their products). Let $\LipK{3}_t$ denote the  corresponding Lipschitz constant, and  define $\Lip{3}_t$ as the maximum  Lipschitz constant, i.e., $\Lip{3}_t:=\max_{k \in \mathcal{K}} \LipK{3}_t$.  Similarly, function $\LS_t(\mathbf z, \boldsymbol \gamma)$, given by  \eqref{eq:hat-l-def}, is Lipschitz  in $\mathbf z$ due to Assumption~\ref{assumption:Lipschitz-finite}, on noting that  the function $\phi$ is Lipschitz.


\section{Proof of Lemma~\ref{lemma:lipschitz-bar-f-hat-f}}\label{sec:proof_lemma:lipschitz-bar-f-hat-f}
For every $y \in \mathcal{X}^k$, $k \in \mathcal{K},$ it follows that
\begin{align}
 &\mathbb{E}\big[\big| \bar f^k_t(\mathbf z_1,\boldsymbol \gamma,\mathbf d_t)(y) -\FS^k_t(\mathbf z_2,\boldsymbol \gamma)(y)\big| \big]\\
&=\mathbb{E}\big[\big| \bar f^k_t(\mathbf z_1,\boldsymbol \gamma,\mathbf d_t)(y)  \pm \FS^k_t(\mathbf z_1,\boldsymbol \gamma)(y) -\FS^k_t(\mathbf z_2,\boldsymbol \gamma)(y)\big| \big]\\
& \substack{{(a)} \\ \leq }\Exp{\big| \bar f^k_t(\mathbf z_1,\boldsymbol \gamma,\mathbf d_t)(y)  - \FS^k_t(\mathbf z_1,\boldsymbol \gamma)(y)\big|}\\
& \qquad + \big| \FS^k_t(\mathbf z_1,\boldsymbol \gamma)(y)  -\FS^k_t(\mathbf z_2,\boldsymbol \gamma)(y)\big|\\
&\substack{{(b)} \\ \leq }\mathbb{E}\big[\big|    \sum_{w \in \mathcal{W}^k} \sum_{x \in \mathcal{X}^k} z_1^k(x)  \ID{f^k_t(x,\gamma^k(x),\phi(\mathbf z_1,\boldsymbol \gamma),w)=y}\\
& \qquad \quad \times (\MW(w)- \PW )   \big| \big]+   \Lip{3}_t \Ninf{\mathbf z_1 -  \mathbf z_2}\\
&\substack{{(c)} \\ \leq} \sum_{w \in \mathcal{W}^k} \hspace{-.05cm} \sum_{x \in \mathcal{X}^k}   z^k_1(x) \Exp{| \MW(w) - \PW|} + \Lip{3}_t \Ninf{\mathbf z_1 - \mathbf z_2}\\
& \substack{{(d)} \\ \leq} \BigON+\Lip{3}_t \Ninf{\mathbf z_1 - \mathbf z_2},
\end{align}
where $(a)$ follows from  the triangle inequality and the monotonicity of the expectation operator; $(b)$  follows from  equations~\eqref{eq:relation-models}, \eqref{eq:def-bar-f-k}, \eqref{eq:hat-f-def} and Lemma~\ref{lemma:lipschitz-hat-f-hat-l}; $(c)$ follows from  $\ID{f^k_t(x,\gamma^k(x), \phi(\mathbf z_1,\boldsymbol \gamma), w)=y} \leq~1$,  the triangle inequality and the monotonicity of the expectation operator, and $(d)$ from Lemma~\ref{lemma:square-root-iid}, $z^k_1(x) \leq 1, \forall x \in \mathcal{X}^k,$ and the fact that  the  cardinality  of spaces $\mathcal{X}^k$ and $\mathcal{W}^k$ does not depend on~$|\mathcal{N}^k|$.

\section{Proof of Lemma~\ref{lemma:lipschitz-bar-f-bar-f}}\label{sec:proof_lemma:lipschitz-bar-f-bar-f}

We first show that for any $\mathbf z \in \IX$,  $\boldsymbol \gamma \in \mathcal{G}$, $\MW \in \mathcal{M}_{|\mathcal{N}^k|}(\mathcal{W}^k)$ and $y \in \mathcal{X}^k$ at time  $t \in \mathbb{N}_T$, $\bar f^k_t(\mathbf z, \boldsymbol \gamma,\MW)(y)$ is equal to:
\begin{align}\label{eq:proof-quantization-max}
& \sum_{w \in \mathcal{W}^k} \sum_{x \in \mathcal{X}^k}  z^k(x)     \ID{f^k_t(x,\gamma^k(x), \phi(\mathbf z,\boldsymbol \gamma), w)= y} \MW(w) \nonumber \\
 & \substack{(a) \\ =} \sum_{w \in \breve{\mathcal{W}}^k} \sum_{x \in \mathcal{X}^k}  z^k(x)  \ID{f^k_t(x,\gamma^k(x), \phi(\mathbf z,\boldsymbol \gamma), w)= y} \nonumber\\
  &  \times        \frac{\MW(w)}{\PW} \PW \nonumber\\
  &  \leq \max_{w \in \breve{\mathcal{W}}^k} (\frac{\MW(w)}{\PW} ) \sum_{w \in \breve{\mathcal{W}}^k} \sum_{x \in \mathcal{X}^k} z^k(x) \nonumber\\
  &      \times    \ID{f^k_t(x,\gamma^k(x), \phi(\mathbf z,\boldsymbol \gamma), w)= y}\PW \nonumber \\
    &  \substack{(b) \\ =} \max_{w \in \breve{\mathcal{W}}^k} (\frac{\MW(w)}{\PW} ) \FS^k_t(\mathbf z,\boldsymbol \gamma)(y),
\end{align}
where in $(a)$   set $\breve{\mathcal{W}}^k:=\mathcal{W}^k \backslash \{w \in \mathcal{W}^k | \PW=0\}$ contains all realizations that have  non-zero probability measures (this equality holds a.s.), and $(b)$ follows from \eqref{eq:relation-models},   \eqref{eq:phi-def-function},  \eqref{eq:hat-f-def} and \eqref{eq:proof-joint-mean-field-1}. By similar argument, for any $y \in \mathcal{X}^k$, we have
\begin{align}\label{eq:proof-quantization-min}
\bar f^k_t(\mathbf z, \boldsymbol \gamma,\MW)(y) \geq  \min_{w \in \breve{\mathcal{W}}^k} (\frac{\MW(w)}{\PW} ) \FS^k_t(\mathbf z,\boldsymbol \gamma)(y).
\end{align}
From \eqref{eq:proof-quantization-max} and \eqref{eq:proof-quantization-min},   it results that for   any $\mathbf z_1, \mathbf z_2 \in \IX$ and $\boldsymbol \gamma \in \mathcal{G}$ at time $\NT$, 
\begin{align}
&\mathbb{E}\Ninf{\bar f^k_t(\mathbf z_1,\boldsymbol \gamma, \MW) - \bar f^k_t(\mathbf z_2,\boldsymbol \gamma, \MW)}  \\
& \leq \mathbb{E} \| \hspace{-.1cm}\max_{w \in \breve{\mathcal{W}}^k} \frac{\MW(w)}{\PW}  \FS^k_t(\mathbf z_1,\boldsymbol \gamma) \hspace{-.1cm} - \hspace{-.15cm} \min_{w \in \breve{\mathcal{W}}^k} \frac{\MW(w)}{\PW}\FS^k_t(\mathbf z_2,\boldsymbol \gamma) \|\\
& \substack{(c)\\ \leq } \mathbb{E} \| \hspace{-.1cm}\max_{w \in \breve{\mathcal{W}}^k} \frac{\MW(w)}{\PW} \FS^k_t(\mathbf z_1,\boldsymbol \gamma) \hspace{-.1cm} - \hspace{-.15cm} \max_{w \in \breve{\mathcal{W}}^k} \frac{\MW(w)}{\PW}\FS^k_t(\mathbf z_2,\boldsymbol \gamma) \|   \\
& \qquad +  \mathbb{E} \| (\max_{w \in \breve{\mathcal{W}}^k} \frac{\MW(w)}{\PW} - \min_{w \in \breve{\mathcal{W}}^k} \frac{\MW(w)}{\PW})\FS^k_t(\mathbf z_2,\boldsymbol \gamma) \|\\
&= \Exp{\max_{w \in \breve{\mathcal{W}}^k} \frac{\MW(w)}{\PW}} \Ninf{\FS^k_t(\mathbf z_1,\boldsymbol \gamma) - \FS^k_t(\mathbf z_2,\boldsymbol \gamma) }\\
&\qquad +  \Exp{\max_{w \in \breve{\mathcal{W}}^k} \frac{\MW(w)}{\PW} - \min_{w \in \breve{\mathcal{W}}^k} \frac{\MW(w)}{\PW}} \Ninf{\FS^k_t(\mathbf z_2,\boldsymbol \gamma)}\\
&\substack{(d)\\ =}    \Ninf{\FS^k_t(\mathbf z_1,\boldsymbol \gamma) - \FS^k_t(\mathbf z_2,\boldsymbol \gamma) } \substack{(e)\\ \leq }  \Lip{3}_t \Ninf{ \mathbf z_1 - \mathbf z_2},
\end{align}
where  $(c)$  follows from the triangle inequality and the monotonicity of the expectation operator, and $(d)$ holds  for every $ w  \in \breve{\mathcal{W}}^k$,
$
\mathbb{E}[\frac{\MW(w)}{\PW}] =\frac{\Exp{\sum_{i \in \mathcal{N}^k} \ID{w^i_t=w}}}{|\mathcal{N}^k|\PW} 
=  \frac{ |\mathcal{N}^k| \PW}{|\mathcal{N}^k|\PW}=1,
$
and $(e)$ follows from Lemma~\ref{lemma:lipschitz-hat-f-hat-l}.

%
\section{Proof of Lemma~\ref{proposition:quantization-basic}}\label{sec:proof_proposition:quantization-basic}

The proof proceeds by  backward induction. At   $t=T$,  
\begin{align*}
V^1_T(\mathbf z^1)&= \min_{\boldsymbol \gamma^1\in \mathcal{G}} \LS_T(\mathbf z^1,\boldsymbol \gamma^1)\\
& \substack{(a) \\ \leq } \min_{\boldsymbol \gamma^1 \in \mathcal{G}} \left( \big|\LS_T(\mathbf z^1,\boldsymbol \gamma^1) -\LS_T(\mathbf z^2,\boldsymbol \gamma^1) \big|+\LS_T(\mathbf z^2,\boldsymbol \gamma^1) \right)\\
& \substack{(b) \\ \leq } \min_{\boldsymbol \gamma^1 \in \mathcal{G}} \left( \Lip{4}_T \| \mathbf z^1- \mathbf z^2\|+\LS_T(\mathbf z^2,\boldsymbol \gamma^1) \right)\\
&\substack{(c) \\ =} \Lip{4}_T \|\mathbf z^1-\mathbf z^2\|+ \min_{\boldsymbol \gamma^2 \in \mathcal{G}} \LS_T(\mathbf z^2,\boldsymbol \gamma^2) \\
&\substack{(d) \\ =} \Lip{4}_T \|\mathbf  z^1-\mathbf z^2\|+ V^2_T(\mathbf z^2),
\end{align*}
where $(a)$ follows from the triangle inequality,  the fact that $\LS_T(\boldsymbol \cdot ) \in \mathbb{R}_{\geq 0}$, and  the monotonicity of the minimum operator; $(b)$ follows from Lemma~\ref{lemma:lipschitz-hat-f-hat-l} and the monotonicity of the minimum operator; $(c)$  follows from the fact that the space of minimization~$\mathcal{G}$ is the same for both $\boldsymbol \gamma^1$ and $\boldsymbol \gamma^2$, and $(d)$ follows from~\eqref{eq:prop-D-t}. Therefore, \eqref{eq:prop-D-difference} holds at $t=T$, where $\Lip{5}_T=\Lip{4}_T$ and $\Lip{6}_T=0$. Now, assume that~\eqref{eq:prop-D-difference} holds at time $t+1$, i.e.,  
$| V^1_{t+1} (\mathbf z^1) -V^2_{t+1} (\mathbf z^2)| \leq \Lip{5}_{t+1}\Ninf{ \mathbf z^1 - \mathbf z^2} +\Lip{6}_{t+1} \Delta.$
It is desired to  prove \eqref{eq:prop-D-difference} for time $t$. To this end, the following relations are derived:
\begin{align*}
V^1_t(\mathbf z^1)&= \min_{\boldsymbol \gamma^1 \in \mathcal{G}} \big(\LS_t(\mathbf z^1,\boldsymbol \gamma^1)+\Exp{ V^1_{t+1}( f^1_t(\mathbf z^1, \boldsymbol \gamma^1,\mathbf d_t))}\big)\\
&\substack{(e) \\ \leq} \min_{\boldsymbol \gamma^1 \in \mathcal{G}}\Big( \big| \LS_t(\mathbf z^1,\boldsymbol \gamma^1) - \LS_t(\mathbf z^2,\boldsymbol \gamma^1)\big| \\
&  \quad + \Exp{\big|  V^1_{t+1}( f^1_t(\mathbf z^1, \boldsymbol \gamma^1,\mathbf d_t)) -V^2_{t+1}( f^2_t(\mathbf z^2, \boldsymbol \gamma^1,\mathbf d_t))     \big|}\\
& \quad +\LS_t(\mathbf z^2,\boldsymbol \gamma^1)+\Exp{V^2_{t+1}(f^2_t(\mathbf z^2, \boldsymbol \gamma^1,\mathbf d_t))} \Big)\\
&\substack{(f) \\ \leq}(\Lip{4}_t+\Lip{5}_{t+1}\Lip{z}_t)\|\mathbf z^1 -\mathbf z^2 \| + (\Lip{5}_{t+1}+\Lip{6}_{t+1}) \Delta\\
&\quad  + \min_{\boldsymbol \gamma^2 \in \mathcal{G}} \Big(\LS_t(\mathbf z^2,\boldsymbol \gamma^2)+\Exp{V^2_{t+1}(f^2_t(\mathbf z^2, \boldsymbol \gamma^2,\mathbf d_t))}  \Big)\\
&\substack{(g) \\ =}\Lip{5}_{t}\|\mathbf z^1 -\mathbf z^2 \| + \Lip{6}_{t} \Delta + V^2_t(\mathbf z^2),
\end{align*}
where $(e)$ follows from the triangle inequality,  the fact that $\LS_t (\boldsymbol \cdot), V^j_{t+1}(\boldsymbol \cdot) \in \mathbb{R}_{ \geq 0}$, $j \in \{1,2\}$, and the monotonicity of the expectation and minimum  operators; $(f)$ follows from Lemma~\ref{lemma:lipschitz-hat-f-hat-l}, equations \eqref{eq:prop-z1-z2},  the induction assumption at time $t+1$,  the monotonicity of the expectation and minimum  operators, and the fact  that the space~$\mathcal{G}$ is the same for both $\boldsymbol \gamma^1$ and $\boldsymbol \gamma^2$, and  $(g)$ follows  from \eqref{eq:prop-D-t} and~\eqref{eq:H5H6}.

\begin{IEEEbiography}[{\includegraphics[width=1in,height=1.25in,clip,keepaspectratio]{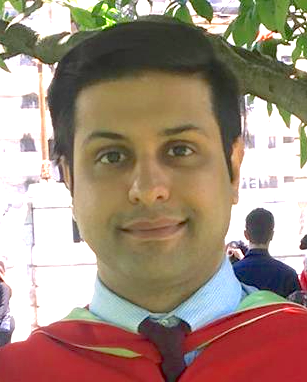}}]{Jalal Arabneydi}
received the Ph.D. degree  in Electrical and Computer Engineering from
  McGill University, Montreal, Canada in 2017.  
   He is currently a postdoctoral
  fellow at Concordia University. He  was  the recipient of the
best student paper award at the 53rd Conference on
Decision and Control (CDC), 2014. His  principal research interests
  include  stochastic control,  robust optimization, game theory,  large-scale system, multi-agent reinforcement learning  with applications in complex networks including  smart grids, swarm robotics,  and finance.  His current research interest is  focused on what he calls deep planning, which bridges  decision making theory and artificial intelligence. The ultimate goal is to define proper mathematical tools and solution concepts in order to develop large-scale decision-making algorithms that work under imperfect information and incomplete knowledge with analytical performance guarantees.
\end{IEEEbiography}
\begin{IEEEbiography}
[{\includegraphics[width=1in,height=1.25in,clip,keepaspectratio]{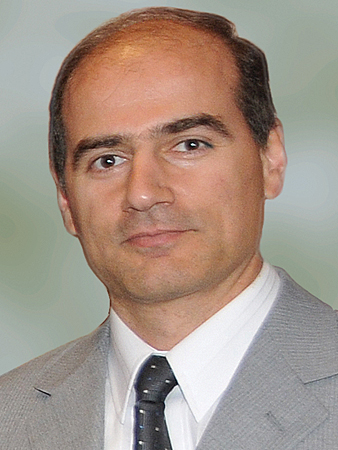}}]{Amir G. Aghdam}
 received the Ph.D. degree in electrical and computer engineering from the University of Toronto, Toronto, ON, Canada, in 2000, and is currently a Professor in the Department of Electrical and Computer Engineering at Concordia University, Montreal, Canada. He is a member of Professional Engineers Ontario, chair of the Conference Editorial Board of IEEE Control Systems Society, Editor-in-Chief of the IEEE Systems Journal, and has served as an Associate Editor of the IEEE Transactions on Control Systems Technology, IEEE Access, IET Control Theory \& Applications, and the Canadian Journal of Electrical and Computer Engineering. He has been a member of the Technical Program Committee of a number of conferences, including the IEEE Conference on Systems, Man and Cybernetics (IEEE SMC) and the IEEE Multiconference on Systems and Control (IEEE MSC). He has served as a member of the Review Panel/Committee for the NSF, Italian Research and University Evaluation Agency (ANVUR), Innovation Fund Denmark Projects, and the Natural Sciences and Engineering Research Council of Canada (NSERC) ECE Evaluation Group. Dr. Aghdam was the 2014–2015 President of IEEE Canada and Director (Region 7), IEEE, Inc., and was also a member of the IEEE Awards Board for this period. He was a Visiting Scholar at Harvard University in fall 2015, and was an Associate at the Harvard John A. Paulson School of Engineering and Applied Sciences from September 2015 to December 2016. His research interests include multi-agent networks, distributed control, optimization and sampled-data systems. Prof.  Aghdam is a recipient of the 2009 IEEE MGA Achievement Award, and is currently a member of the IEEE Medal of Honor Committee Award and 2020 IEEE Canada J. M. Ham Outstanding Engineering Educator Award. Dr. Aghdam was a member of the IEEE Medal of Honor Committee for 2017-2019 and is currently the Vice-Chair of the IEEE Medals Council.
\end{IEEEbiography}

\clearpage

\section*{\textbf{Partially exchangeable agents}~\cite[Chapter 2]{arabneydi2016new}}

Consider a discrete-time control system consisting of a finite population of  agents, where agents are partitioned into $K \in \mathbb{N}$ disjoint sub-populations.   Denote by $\mathcal{K}$ the set of sub-populations,  by $\mathcal{N}^k$  the agents of sub-population $k \in \mathcal{K}$, and by  $\mathcal{N}$ the entire population of agents; note that $\mathcal{N}=\cup_{k \in \mathcal{K}} \mathcal{N}^k$.  Given the control horizon $T \in \mathbb{N}$, let the state, action, and the noise of agent $i \in \mathcal{N}^k$ of sub-population $k \in \mathcal{K}$ at time $t \in \mathbb{N}_T$  be denoted by  $x^i_t \in \mathcal{X}^k$, $u^i_t \in \mathcal{U}^k$, and $w^i_t \in \mathcal{W}^k$, respectively.  For the entire population,  the joint state, joint action, and joint noise are analogously denoted by $\mathbf x_t=(x^i_t)_{i \in \mathcal{N}} \in \mathcal{X}$, $\mathbf u_t=(u^i_t)_{i \in \mathcal{N}} \in \mathcal{U}$,  and $\mathbf w_t=(w^i_t)_{i \in \mathcal{N}} \in \mathcal{W}$  at time $t \in \mathbb{N}_T$. 

For any  $k \in \mathcal{K}$, the initial state of agent $i$ of sub-population~$k$ is denoted by  $x^i_1 \in \mathcal{X}^k$, and at time $t \in \mathbb{N}_T$ its state evolves as follows:
\begin{equation}\label{eq:basic-model-f-pre}
x^i_{t+1}=f^i_t(\mathbf x_t,\mathbf{u}_t, w^i_t),
\end{equation}
where $f_t^i: \mathcal{X} \times \mathcal{U} \times \mathcal{W}^k \rightarrow \mathcal{X}^k$ describes the dynamics of agent $i$ at time $t \in \mathbb{N}_T$.  The primitive random variables $\{\mathbf x_1, \mathbf w_1,\ldots,\mathbf w_T\}$ are defined on a common probability space. In short, the dynamics of the entire  population may be expressed   in an augmented form as: 
\begin{equation}
\mathbf x_{t+1}=f_t(\mathbf x_t,\mathbf{u}_t, \mathbf w_t).
\end{equation}
 A per-step cost $c_t(\mathbf{x}_t,\mathbf u_t):\mathcal{X}\times \mathcal{U} \rightarrow \mathbb{R}_{\geq 0}$ is incurred at any time $t \in \mathbb{N}_T$, which reflects the desirable behaviour of the system  at time $t$. Let $I^i_t  \subseteq \{\mathbf x_{1:t}, \mathbf u_{1:t-1} \}$ denote the information of agent $i$ at time $t \in \mathbb{N}_T$ for any  $i \in \mathcal{N}$. In particular,  the action of agent $i$ at time $t$ is given by 
\begin{equation}\label{eq:control-law-general-pre}
u^i_t=g^i_t(I^i_t),
\end{equation}
where deterministic function $g^i_t$  is called the  control law of agent $i$ at time $t \in \mathbb{N}_T$. The set of control laws $\mathbf{g}:=\{(g^i_t)_{i \in \mathcal{N}}\}_{t=1}^T$ is defined as the \emph{strategy} of  the system and  the control  performance is  described by
\begin{equation}\label{eq:J-general-def-pre}
J(\mathbf{g})= \mathbb{E}^{\mathbf{g}} \big[ \sum_{t=1}^T c_t(\mathbf x_t,\mathbf{u}_t)\big],
\end{equation}
where the expectation is taken with respect to the probability measures induced by the choice of the strategy $\mathbf g$.

\begin{Definition}[\textbf{Partially exchangeable agents}]\label{def:Exchangeable agents}
The multi-agent system described above is said to be a system with partially exchangeable agents if the following two conditions hold  for  any pair of agents  $(i,j) \in \mathcal{N}^k$ of any sub-population $k \in \mathcal{K}$ at any time $t \in \mathbb{N}_T$:
\begin{enumerate}
\item   $
 \sigma_{i,j} \big(f_t(  \mathbf x_t, \mathbf u_t, \mathbf w_t)\big)= f_t\big(\sigma_{i,j} (\mathbf x_t),\sigma_{i,j} ( \mathbf u_t),\sigma_{i,j}  (\mathbf w_t)\big),
$
 i.e., exchanging agents $i$ and $j$ has no effect on the system dynamics.
\item [2)] $
 c_t(  \mathbf x_t, \mathbf u_t)= c_t(\sigma_{i,j}  (\mathbf x_t),\sigma_{i,j}  (\mathbf u_t)),$
 i.e., exchanging  agents $i$ and $j$ does not impact the cost.
\end{enumerate}
\end{Definition}
\begin{Remark}\label{remark:exch-counter-pre}
\emph{Note that the notion of  exchangeable agents is different from the notion of  exchangeable random variables often used in the probability literature (e.g., de Finetti's Theorem).  See a counterexample in~\cite[Chapter 2]{arabneydi2016new} where it is illustrated that neither is necessarily implied by the other.}
\end{Remark}

 \subsection{Empirical distribution}
Let $\mathbf b=(b^1,\ldots,b^n)$ denote a vector of $n \in \mathbb{N}$  samples from set $\mathcal{B}:=\{a_1,\ldots,a_{|\mathcal{B}|}\}$, where $b^i \in \mathcal{B}, i \in \mathbb{N}_n $. The empirical distribution function $\xi: \prod_{i=1}^n \mathcal{B} \rightarrow \mathcal{E}_n(\mathcal{B})$  is defined as  a real-valued vector of size $|\mathcal{B}|$ such that
\begin{equation}\label{eq:def-Emp-pre}
\xi(\mathbf{b})(a_j)= \frac{1}{n}\sum_{i=1}^n \mathds{1}(b^i=a_j), \quad j \in \mathbb{N}_{|\mathcal{B}|}.
\end{equation}
\begin{Lemma}\label{lemma-exchangeable-basic-empirical-pre}
Let $h$  denote an arbitrary  exchangeable function over the product space $\prod_{i=1}^n \mathcal{B}$.  Then, there  exists a function~$\bar h$ such that
 $h(\mathbf b)=\bar h(\xi(\mathbf b)).$
\end{Lemma}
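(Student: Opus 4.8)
The plan is to show that the fibers of the empirical distribution map $\xi$ coincide exactly with the orbits of the symmetric group $S_n$ acting on $\prod_{i=1}^n\mathcal{B}$ by permuting coordinates, and that an exchangeable $h$ is, by definition, constant on each such orbit; the desired factorization $h=\bar h\circ\xi$ then follows by defining $\bar h$ on each fiber through an arbitrary representative.

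First I would record the combinatorial description of the fibers: for $\mathbf b,\mathbf b'\in\prod_{i=1}^n\mathcal{B}$ one has $\xi(\mathbf b)=\xi(\mathbf b')$ if and only if there is a permutation $\pi$ of $\mathbb{N}_n$ with $b'^i=b^{\pi(i)}$ for all $i\in\mathbb{N}_n$. The ``if'' direction is immediate from \eqref{eq:def-Emp-pre}, since reindexing the summands leaves each count $\sum_{i=1}^n\mathds{1}(b^i=a_j)$ unchanged. For the ``only if'' direction, observe that $n\,\xi(\mathbf b)(a_j)$ is precisely the number of coordinates of $\mathbf b$ equal to $a_j$; if these counts agree for every $j\in\mathbb{N}_{|\mathcal{B}|}$, then $\mathbf b$ and $\mathbf b'$ have the same multiset of entries, hence one is obtained from the other by a permutation of the indices.

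Next I would invoke exchangeability. Since every $\pi\in S_n$ is a product of transpositions and each transposition acts as some $\sigma_{i,j}$, the assumed invariance $h(\mathbf b)=h(\sigma_{i,j}(\mathbf b))$ for all $i,j$ extends to $h(\mathbf b)=h(b^{\pi(1)},\ldots,b^{\pi(n)})$ for every $\pi\in S_n$. Combining this with the fiber description of the previous step, $h$ takes a single common value on each fiber $\xi^{-1}(\alpha)$, $\alpha\in\mathcal{E}_n(\mathcal{B})$. Finally, for the construction of $\bar h$, note that $\xi$ is surjective onto $\mathcal{E}_n(\mathcal{B})$ (given $\alpha=(\alpha_1,\ldots,\alpha_{|\mathcal{B}|})\in\mathcal{E}_n(\mathcal{B})$, the vector listing each $a_j$ with multiplicity $n\alpha_j$ realizes it), so for each $\alpha$ one may fix some $\mathbf b_\alpha$ with $\xi(\mathbf b_\alpha)=\alpha$ and set $\bar h(\alpha):=h(\mathbf b_\alpha)$; fiber-constancy makes this independent of the representative, and then $\bar h(\xi(\mathbf b))=h(\mathbf b_{\xi(\mathbf b)})=h(\mathbf b)$ for every $\mathbf b$, which is the claim.

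No step here is a genuine obstacle: the only place requiring a little care is the ``only if'' direction of the fiber characterization, where equality of all coordinate-value counts must be turned into permutation-equivalence — this is the combinatorial heart of the statement — while everything else is bookkeeping with $S_n$ and the definition of $\bar h$.
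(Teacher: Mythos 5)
Your proof is correct and is essentially the same argument as the paper's: both rest on the observation that $\xi(\mathbf b)$ determines $\mathbf b$ up to a permutation of coordinates, so an exchangeable $h$ factors through $\xi$. The only cosmetic difference is that the paper constructs an explicit canonical (sorted) representative $\mathbf s$ from $\xi(\mathbf b)$ and sets $\bar h(\xi(\mathbf b)):=h(\mathbf s)$, whereas you pick an arbitrary representative of each fiber and verify well-definedness via the orbit characterization — your version makes explicit the multiset argument that the paper leaves implicit when asserting that $\mathbf s$ is a permuted version of $\mathbf b$.
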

\begin{proof}
Define vector  $\mathbf{s} \in \prod_{i=1}^n \mathcal{B}$ such that  its $i$-th element, $i \in \mathbb{N}_n$, is  equal to $a_j \in \mathcal{B}$,    where index $j \in \mathbb{N}_{|\mathcal{B}|}$    satisfies the following inequalities:  $ \ID{j \neq 1} n\sum_{l=1}^{j-1}\xi(\mathbf b)(a_l)<i\leq n\sum_{l=1}^{j}\xi(\mathbf b)(a_l)$.  As a result, vector $ \mathbf s$ is an exchanged  version of vector $\mathbf b$ and it  is completely determined  by  $\xi(\mathbf b)$.  Since  $h(\mathbf b)$ is exchangeable, we get   $h(\mathbf b)=h(\mathbf s)=:\bar h(\Emp{\mathbf b)}$. 
\end{proof}
\begin{figure}
\centering
\vspace{-0cm}
\scalebox{0.9}{
\includegraphics[trim={0cm 0cm 0 0cm},clip, width=\linewidth]{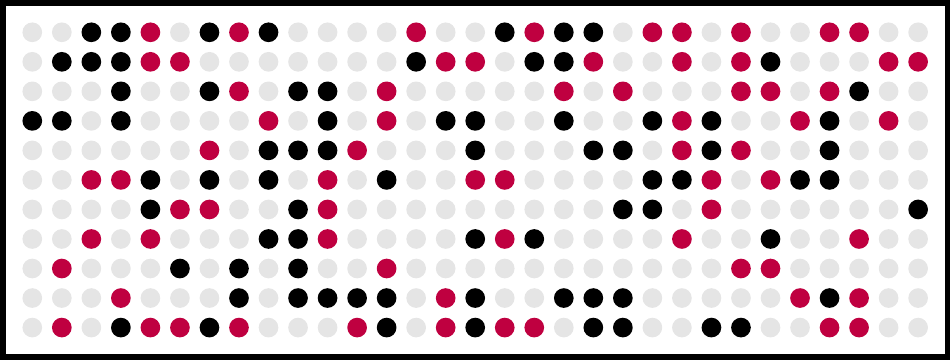}}
\caption{\edit{Partially exchangeable agents  wherein the system is invariant to permutation in each sub-population~\cite[Chapter 2]{arabneydi2016new}.}}
\end{figure}
\subsection{Structure of  deep teems with partially exchangeable agents}
The short-hand notation $\sigma(\mathbf x)$ is used to denote any arbitrary permuted version of vector $\mathbf x$.
\begin{Proposition}\label{prop:model-mean-field-pre}
For any controlled Markov chain system with partially exchangeable agents  described in Definition~\ref{def:Exchangeable agents}, there exist functions $(f^k_t)_{k \in \mathcal{K}}$ and $\bar c_t$, $t \in \mathbb{N}_T$, such that the dynamics of agent $i $ of sub-population $k \in \mathcal{K}$  can be written~as
\begin{equation}\label{eq:dynamics-f-mean-field-pre}
x^i_{t+1}=f^k_t(x^i_t,u^i_t, \boldsymbol{\mathfrak{D}}_t, w^i_t),
\end{equation}
and the per-step cost at time $t \in \mathbb{N}_T$,  can  be expressed as
\begin{equation}\label{eq:per-step-cost-mean-field-pre}
 \bar c_t(\boldsymbol{\mathfrak{D}}_t)=c_t(\mathbf{x}_t, \mathbf u_t).
\end{equation}
\end{Proposition}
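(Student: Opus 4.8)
The plan is to reduce both claims to the single fact recorded in Lemma~\ref{lemma-exchangeable-basic-empirical-pre}: an exchangeable function of a list of samples factors through their empirical distribution. The only extra bookkeeping is that here exchangeability is \emph{partial} (block-wise, one block per sub-population) and that, for the dynamics, one coordinate must be frozen. So first I would state the obvious multi-block refinement of Lemma~\ref{lemma-exchangeable-basic-empirical-pre}: if $h\colon \prod_{k\in\mathcal{K}}\prod_{i\in\mathcal{N}^k}\mathcal{B}^k\to\mathcal{Y}$ is invariant under every within-block transposition $\sigma_{i,j}$, $(i,j)\in\mathcal{N}^k$, then $h(\mathbf b)=\bar h\big((\xi((b^i)_{i\in\mathcal{N}^k}))_{k\in\mathcal{K}}\big)$; the proof is exactly the argument of Lemma~\ref{lemma-exchangeable-basic-empirical-pre} applied block by block (sort each block into the canonical order determined by its own empirical distribution).

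For the cost, condition~2) of Definition~\ref{def:Exchangeable agents} says $c_t$ is invariant under every within-block transposition acting \emph{simultaneously} on $\mathbf x_t$ and $\mathbf u_t$; equivalently, $c_t$ is an exchangeable function of the list of pairs $\big((x^i_t,u^i_t)\big)_{i\in\mathcal{N}}$ over the block spaces $\mathcal{X}^k\times\mathcal{U}^k$. Applying the multi-block refinement with $\mathcal{B}^k:=\mathcal{X}^k\times\mathcal{U}^k$ produces $\bar c_t$ with $c_t(\mathbf x_t,\mathbf u_t)=\bar c_t\big((\xi((x^i_t,u^i_t)_{i\in\mathcal{N}^k}))_{k\in\mathcal{K}}\big)=\bar c_t(\boldsymbol{\mathfrak{D}}_t)$, which is~\eqref{eq:per-step-cost-mean-field-pre}.

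For the dynamics I would read off the $i$-th component of condition~1). Taking the $i$-th coordinate of $\sigma_{i,j}\big(f_t(\mathbf x_t,\mathbf u_t,\mathbf w_t)\big)=f_t(\sigma_{i,j}\mathbf x_t,\sigma_{i,j}\mathbf u_t,\sigma_{i,j}\mathbf w_t)$ for $(i,j)\in\mathcal{N}^k$ gives $f^j_t(\mathbf x_t,\mathbf u_t,w^j_t)=f^i_t(\sigma_{i,j}\mathbf x_t,\sigma_{i,j}\mathbf u_t,w^j_t)$, so all agents of sub-population $k$ share one reduced dynamics after relabelling; fix a representative $i\in\mathcal{N}^k$ and work with $f^i_t$. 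Next, for any transposition $\sigma_{a,b}$ with $a,b\neq i$ (in any sub-population) the $i$-th coordinate is untouched, so the same identity yields $f^i_t(\mathbf x_t,\mathbf u_t,w^i_t)=f^i_t(\sigma_{a,b}\mathbf x_t,\sigma_{a,b}\mathbf u_t,w^i_t)$; that is, with $(x^i_t,u^i_t,w^i_t)$ held fixed, $f^i_t$ is exchangeable in the remaining pairs $(x^\ell_t,u^\ell_t)_{\ell\neq i}$, block-wise. Applying the multi-block refinement to this residual list shows that $f^i_t(\mathbf x_t,\mathbf u_t,w^i_t)$ is a function of $x^i_t$, $u^i_t$, $w^i_t$, the empirical distribution of $(x^\ell_t,u^\ell_t)$ over $\mathcal{N}^k\setminus\{i\}$, and the empirical distributions over $\mathcal{N}^{k'}$ for $k'\neq k$. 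Since the empirical distribution over $\mathcal{N}^k\setminus\{i\}$ together with the single pair $(x^i_t,u^i_t)$ carries exactly the information in $\mathfrak{D}^k_t$ — indeed $|\mathcal{N}^k|\,\mathfrak{D}^k_t = (|\mathcal{N}^k|-1)\,\xi((x^\ell_t,u^\ell_t)_{\ell\in\mathcal{N}^k\setminus\{i\}}) + \delta_{(x^i_t,u^i_t)}$ — this function is a function of $(x^i_t,u^i_t,\boldsymbol{\mathfrak{D}}_t,w^i_t)$; call it $f^k_t$, and by the representative-independence above it is the same whichever $i\in\mathcal{N}^k$ was chosen. This is~\eqref{eq:dynamics-f-mean-field-pre}. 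I expect the main obstacle to be purely notational: cleanly separating the frozen coordinate $(x^i_t,u^i_t)$ from the exchangeable remainder, verifying the information equivalence between $\mathfrak{D}^k_t$ and the pair $\big((x^i_t,u^i_t),\ \xi((x^\ell_t,u^\ell_t)_{\ell\in\mathcal{N}^k\setminus\{i\}})\big)$, and spelling out the (routine) multi-block extension of Lemma~\ref{lemma-exchangeable-basic-empirical-pre}; everything else is a direct coordinate-wise reading of Definition~\ref{def:Exchangeable agents}.
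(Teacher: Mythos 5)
Your proposal is correct and follows essentially the same route as the paper's proof: freeze agent $i$'s own coordinates, use the within-block permutation invariance of condition~1) together with the empirical-distribution factoring of Lemma~\ref{lemma-exchangeable-basic-empirical-pre} (applied block by block) to reduce the remaining arguments to $\xi\big((x^\ell_t,u^\ell_t)_{\ell\neq i}\big)$, recover $\boldsymbol{\mathfrak{D}}_t$ via the same information-equivalence observation, and use the $i\leftrightarrow j$ exchange to identify the reduced dynamics across a sub-population; the cost part is the direct application of the same lemma. The only difference is cosmetic — you establish representative-independence before the reduction while the paper does it after — so nothing further is needed.
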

\begin{proof}
Consider  agent  $i \in \mathcal{N}^k$ of sub-population $k \in \mathcal{K}$. Let $\mathbf x^{-i}_t$ and $\mathbf u^{-i}_t$ respectively denote the  joint state $\mathbf x_t$ and joint action $\mathbf u_t$ of all agents except agent $i$. The dynamics given by~\eqref{eq:basic-model-f-pre} can be rewritten as follows:
\begin{align}\label{eq:proof-dynamic-ij-pre}
x^i_{t+1}&=f^i_t({x^i_t,u^i_t, \mathbf x^{-i}_t,\mathbf u^{-i}_t,w^{i}_t}).
\end{align}
Let $\boldsymbol \sigma^{-i}$ denote any arbitrary permutation of  the entire population excluding agent $i$, where the permutation is allowed only within the sub-populations.   According to condition 1 of Definition~\ref{def:Exchangeable agents}, arbitrarily  permuting   the state and action of other  agents  will not change the dynamics of agent $i \in \mathcal{N}^k$. Thus,
\[f^i_t({x^i_t,u^i_t, \mathbf x^{-i}_t,\mathbf u^{-i}_t,w^{i}_t})=f^i_t({x^i_t,u^i_t, \boldsymbol \sigma^{-i} (\mathbf x^{-i}_t, \mathbf u^{-i}_t),w^{i}_t}).\]
By applying Lemma~\ref{lemma-exchangeable-basic-empirical-pre} to all sub-populations successively, it results that  there exists a function $\bar{f}^i$ such that
\begin{equation}\label{eq:proof-exchangeable-proposition-pre}
f^i_t({x^i_t,u^i_t, \mathbf x^{-i}_t,\mathbf u^{-i}_t,w^{i}_t})=\bar f^i_t(x^i_t,u^i_t,\xi( \mathbf x^{-i}_t,\mathbf u^{-i}_t), w^i_t),
\end{equation}
where, by  a slight abuse of notation,  $\Emp{\mathbf x^{-i}_t,\mathbf u^{-i}_t}$ denotes a vector consisting of the empirical distributions  of  states and actions of agents, except  agent~$i$,  in all sub-populations. Note that $\Emp{\mathbf x^{-i}_t,\mathbf u^{-i}_t}$  can be identified  by   $\xi( \mathbf x_t,\mathbf u_t)=\boldsymbol{\mathfrak{D}}_t$ and $(x^i_t,u^i_t)$.  Thus, from equation \eqref{eq:proof-exchangeable-proposition-pre} there exists a function $\tilde f_t$ such that
\[f^i_t({x^i_t,u^i_t, \mathbf x^{-i}_t,\mathbf u^{-i}_t,w^{i}_t})=\tilde f^i_t(x^i_t,u^i_t,\boldsymbol{\mathfrak{D}}_t, w^i_t).\]
Now, consider two arbitrary agents $i$ and $j$ of sub-population~$k$. From condition 1 of  Definition~\ref{def:Exchangeable agents}, we have
\[x^i_{t+1}= \tilde f^i_t(x^i_t,u^i_t,\boldsymbol{\mathfrak{D}}_t, w^i_t)=\tilde f^j_t(x^i_t,u^i_t,\boldsymbol{\mathfrak{D}}_t, w^i_t),\]
and
\[x^j_{t+1}= \tilde f^j_t(x^j_t,u^j_t,\boldsymbol{\mathfrak{D}}_t, w^j_t)=\tilde f^i_t(x^j_t,u^j_t,\boldsymbol{\mathfrak{D}}_t, w^j_t).\]
Hence, $\tilde{f}^i_t=\tilde{f}^j_t=:f^k_t,\hspace{.1cm} \forall i,j \in \mathcal{N}^k$.

A similar argument applies to the per-step cost. Let  $\boldsymbol \sigma$ denote any  arbitrary permutation of the entire population, where the permutation is allowed only within the sub-populations. Condition~2 of Definition~\ref{def:Exchangeable agents},  implies (directly) that arbitrarily permuting the agents of sub-population $k \in \mathcal{K}$ does not change the cost. Thus,  by applying the result  of Lemma~\ref{lemma-exchangeable-basic-empirical-pre} to all sub-populations $k \in \mathcal{K}$, there exists a function $\bar c_t$ such that 
\[ c_t(\mathbf x_t,\mathbf u_t)=c_t(\boldsymbol \sigma(\mathbf x_t,\mathbf u_t))=:\bar c_t(\xi (\mathbf x_t,\mathbf u_t))=\bar c_t(\boldsymbol{\mathfrak{D}}_t).\] 
\end{proof}

\end{document}